\numberwithin{equation}{section} 
\newcommand{\R}{\ensuremath{\mathbb{R}}}
\newcommand{\N}{\ensuremath{\mathbb{N}}}
\newcommand{\cA}{\mathcal{A}}
\newcommand{\cC}{\mathscr{C}}
\newcommand{\bP}{\mathbb{P}}
\newcommand{\mP}{\mathbb{P}}
\newcommand{\Z}{\mathbb{Z}}
\newcommand{\X}{\mathbb{X}}
\newcommand{\E}{\mathbb{E}}
\newcommand{\cD}{\mathcal{D}}
\newcommand{\ta}{\mathfrak{a}}
\newcommand{\cL}{\mathcal{L}}  
\newcommand{\bx}{\mathbf x}
\newcommand{\bX}{\mathbf X}
\newcommand{\cB}{\mathcal{B}}
\newcommand{\tz}{\bar z}
\newcommand{\tpsi}{\bar{\psi}}
\newcommand{\teta}{\bar{\eta}}
\newcommand{\tp}{p{\rm -var}}
\newcommand{\tq}{q{\rm -var}}
\newcommand{\ltn}{\ensuremath{\left| \! \left| \! \left|}}
\newcommand{\rtn}{\ensuremath{\right| \! \right| \! \right|}}
\newtheorem{theorem}{Theorem}
{ \theorembodyfont{\normalfont} 
	\newtheorem{example}[theorem]{Example}
	\newtheorem{remark}[theorem]{Remark}
}
\newtheorem{lemma}[theorem]{Lemma}
\newtheorem{proposition}[theorem]{Proposition}
\newcounter{enumctr}
\def\and{%
\end{tabular}%
\begin{tabular}[t]{c}}%
\def\@fnsymbol#1{\ensuremath{\ifcase#1\or a\or b\or c\or
		d\or e\or f\or g\or h\or i\else\@ctrerr\fi}}
\begin{document}
	
	\title{Strong Lyapunov functions for rough systems}
	\author{Luu Hoang Duc\thanks{Department of Mathematics, University of Klagenfurt, Austria \& Max Planck Institute for Mathematics in the Sciences, Leipzig, Germany \& Institute of Mathematics, Vietnam Academy of Science and Technology, Vietnam. {\tt \small duc.luu@aau.at, duc.luu@mis.mpg.de, lhduc@math.ac.vn}}, $\;$ J\"urgen Jost \thanks{
			J\"urgen Jost is with the Max Planck Institute for Mathematics in the Sciences, Leipzig, Germany \& ScaDS.AI, Leipzig University, Germany \& Max Planck Institute for Human Cognitive and  Brain Sciences, Leipzig, Germany \&  Santa Fe Institute, Santa Fe, USA, {\tt \small jost@mis.mpg.de}.} }
	\date{}
	\maketitle
	
	\begin{abstract}
     We extend the Lyapunov function technique, a fundamental tool for investigating asymptotic stability and existence of attractors for ordinary differential equations, by introducing the notion of a {\it strong Lyapunov function} for an autonomous drift under stochastic perturbation driven by general H\"older-continuous multiplicative noise, not necessarily Brownian.
        The mathematical setting within which our method proceeds consists of rough path calculus and the framework of random dynamical systems. We conclude that if such a function exists for the drift then the perturbed system admits a global random pullback attractor that is upper semi-continuous w.r.t. the noise intensity coefficient and the dyadic approximation of the noise. Moreover, in case the drift is globally Lipschitz continuous, then there exists a numerical attractor for the discretization which is upper semi-continuous w.r.t. the noise intensity and converges to the continuous attractor as the step size tends to zero. Several applications, including dissipative systems, the pendulum, the FitzHugh-Nagumo neuro-system and the Lorenz system, demonstrate the power of our approach. We also prove that strong Lyapunov functions can be approximated in practice by Lyapunov neural networks.
	\end{abstract}
	
	{\bf Keywords:}
	rough differential equations, Doss-Sussmann transformation, strong Lyapunov functions, pullback attractors, asymptotic stability.
	
	\section{Introduction}\label{sec:intro}
    A standard technique for showing asymptotic stability or the existence of an attractor for the autonomous differential equation
	\begin{equation}\label{eq:abstract}
		\dot{y}_t = f(y_t), \quad y \in \R^d  
	\end{equation}
    is the construction of a Lyapunov function. For instance, this method can be applied to many nonlinear systems such as the FitzHugh-Nagumo system, a stylized model of neuron dynamics widely used in computational neuroscience, or the famous Lorenz system. Namely, we can prove in such applications the existence of a so-called \emph{Lyapunov} function $V \in \cC^1(\R^d,\R_+)$ and constants $d_1,d_2$ with $d_2 >0$ such that its gradient $\nabla V$ along the vector field $f \in C(\R^d,\R^d)$ satisfies
		\begin{equation}\label{classicgradientnegativity}
			\langle \nabla V(y), f(y)\rangle \leq d_1 - d_2 V(y),\quad \forall y \in \R^d.
		\end{equation}
        Then  combining \eqref{eq:abstract} and \eqref{classicgradientnegativity} yields 
        \begin{equation}\label{eq:Lya}
            \frac{d}{dt}V(y_t) = \langle \nabla V(y_t), f(y_t)\rangle \leq d_1 - d_2 V(y_t)
        \end{equation}
        for a solution of \eqref{eq:abstract}, allowing us to deduce asymptotic exponential stability and the existence of an attractor. 
        
        Remarkably, this technique still works when \eqref{eq:abstract} is perturbed by white noise \cite{khasminskii}, that is, when we consider the stochastic differential equation
        \begin{equation}\label{eq:white}
		dy_t = f(y_t) dt + g(y_t)dB_t,
	\end{equation}
    where $B_t$ is a multi-dimensional standard Brownian motion. The reason is that we can apply the It\^o formula for a function $V \in \cC^2(\R^d,\R)$ for stochastic equation \eqref{eq:white} so that 
    \begin{equation}\label{stoch:Ito}
        d V(y_t) = \Big[\langle \nabla V(y_t), f(y_t)\rangle + \frac{1}{2} {\rm tr} \Big( g(y_t)g(y_t)^{\rm T}D^2V(y_t)\Big) \Big] dt+ \langle \nabla V(y_t), g(y_t)dB_t\rangle.
    \end{equation}
 Since the stochastic term disappears after taking the expectation in \eqref{stoch:Ito}, by imposing the condition 
 \begin{equation}\label{stoch:Lya}
     \cL V(y) = \langle \nabla V(y), f(y)\rangle + \frac{1}{2} {\rm tr} \Big( g(y)g(y)^{\rm T}D^2V(y)\Big)\leq d_1 - d_2 V(y),\quad \forall y \in \R^d
 \end{equation}
  we obtain from \eqref{stoch:Ito} and \eqref{stoch:Lya} that
  \begin{equation}\label{stoch:expect}
    \frac{d}{dt} \E V(y_t) \leq  d_1 - d_2 \E V(y_t).
    \end{equation} 
  Inequality \eqref{stoch:expect} again yields asymptotic exponential stability and the existence of an attractor in the random setting (mainly in the mean square sense \cite{kloedenlorenz}). 
  
  Unfortunately, the expectation operator no longer works for a more general stochastic system
	\begin{equation}\label{eq:stochFHN}
		dy_t = f(y_t) dt + g(y_t) dX_t,
	\end{equation}	
   where $X$ is a multi-dimensional stochastic noise that is not a semi-martingale or a Markov process (such as a fractional Brownian motion \cite{mandelbrot}). The simple reason is that, even if an It\^o type formula as \eqref{stoch:Ito} exists, in general taking the expectation does not eliminate the stochastic term in \eqref{stoch:Ito}. 
  
  Still, we are inspired by the classical idea that condition \eqref{classicgradientnegativity} depends only on the drift part, while condition \eqref{stoch:Lya} depends also on the diffusion part and might be challenging to check due to the uncertainty coming from stochastic perturbations. This is often observed in the data-driven approach (see e.g. recent work \cite{duchongcong24}) in which we only have information on the drift $f$ and very little information on the diffusion $g$. On the other hand, we observe from \eqref{stoch:Lya} that, in the special case $\|g\|_\infty, \|D^2V\|_\infty <\infty$, the second term ${\rm tr} \Big( g(y)g(y)^{\rm T}D^2V(y)\Big)$ in $\cL V(y)$ in \eqref{stoch:Lya} is bounded, thus condition \eqref{stoch:Lya} can be deduced from the classical condition \eqref{classicgradientnegativity}.
The main contribution of this paper is, therefore, to show that the Lyapunov function technique can still be applied to stochastic system \eqref{eq:stochFHN} if the diffusion part $g$ is bounded. For that purpose, we strengthen condition \eqref{classicgradientnegativity} and require that it still holds when $f$ and its argument are perturbed in a controlled way. 
     
     Behind this is the Doss-Sussmann technique \cite{Sus78} that converts the stochastic differential equation \eqref{eq:stochFHN} into a random one. For that purpose, we follow a pathwise approach to solve stochastic equation \eqref{eq:stochFHN} via Lyons' rough path theory (see \cite{lyons98}, \cite{friz} and Appendix \ref{roughpath}). Namely one attempts to solve the controlled differential equation
\begin{equation}\label{fSDE0}
	dy_t = f(y_t)dt + g(y_t)d x_t,
\end{equation}
where the driving path $x \in \R^m$ is a realization of $X$ in the space $C^{\nu}(\R, \R^d)$ of continuous paths with finite $\nu$-H\"older norm on any finite time interval, such that $x$ can be lifted to a rough path $\omega = (1,\bx) = (1,x,\X)$. The solution of \eqref{fSDE0} is often understood in the sense of either Lyons-Davie \cite{lyons98} or of Friz-Victoir \cite{friz}, which does not need to specify rough integrals. Alternatively, we can also interpret equation \eqref{fSDE0}  in the integral form for the rough path $y$ {\it controlled} by $x$ in the sense of Gubinelli \cite{gubinelli}. 

To apply rough path calculus in the simplest form, the following assumptions (which suffice for our applications) are often imposed for coefficient functions.
		
(${\textbf H}_f$) $f:\R^d \to \R^d$ is locally Lipschitz continuous.\\

(${\textbf H}_g$) $g$ is in $C^3_b(\R^d,\R^{d\times m})$ where we define
\begin{equation}\label{gcond.new}
	C_g = \|g\|_{C^3_b}:= \max \Big\{ \|g\|_\infty,\|Dg\|_\infty,\|D^2g\|_\infty,\|D^3g\|_\infty\Big\}.
\end{equation}

	(${\textbf H}_X$) For a given $\nu \in (\frac{1}{3},\frac{1}{2}]$, $\R^m \ni X_t(\omega)$ is a stochastic process with stationary increments, of which almost all realizations $x$ belong to the space $C^{\nu}(\R, \R^m)$ of $\nu-$H\"older continuous paths, such that $x$ is truly rough and can be lifted into a rough path lift $\bx = (x,\X)$ of a stochastic process $(X_\cdot(\omega),\X_{\cdot,\cdot}(\omega))$ with stationary increments, and the estimate
	\begin{equation}\label{conditionx}
		\E \Big(\|X_{s,t} \|^p +\|\X_{s,t}\|^{q}\Big)\leq C_{T,\nu} |t-s|^{p \nu },\forall s,t \in [0,T]
	\end{equation}
	holds for any $[0,T]$, with $p\nu \geq 1, q = \frac{p}{2}$ and some constant $C_{T,\nu}$. (Examples of such processes include multi-dimensional fractional Brownian motions \cite{mandelbrot}).
		
	Following \cite{duc21},\cite{duckloeden} by using rough path integrals, we solve the rough differential equation \eqref{fSDE0} using the Doss-Sussmann technique \cite{Sus78}. Recall from \cite{duc20}, \cite[Proposition 2.1]{duc21}  that the solution $\phi_{\cdot,a}(\bx,\phi_a)$ of the {\it pure} rough differential equation 
	\begin{equation}\label{pure}
		d \phi_t = g(\phi_t)dx_t,\quad t \in [a,b], \phi_a \in \R^d 
	\end{equation}
	is $C^1$ w.r.t. the initial value $\phi_a$. Moreover, for any interval $[a,b]$ such that 
    \[
    \lambda:= 16 C_p C_g \ltn \bx \rtn_{\tp,[a,b]} \leq 1, 
    \]
    where $C_p$ is a constant from the sewing lemma \cite{congduchong23}, the solution $\phi$ of \eqref{pure} satisfies the following estimates
	\begin{equation}\label{solest}
			\ltn \phi_{\cdot,a}(\bx,\phi_a) \rtn_{\tp,[a,b]}, \Big\| \Big[\frac{\partial \phi }{\partial \phi_a}(\cdot,a,\bx,\phi_a)\Big]^{-1} - Id\Big\|_{\infty,[a,b]}\leq 16 C_p C_g \ltn \bx \rtn_{\tp,[a,b]}.
	\end{equation}	
     The idea is to solve equation \eqref{fSDE0} in each small interval $[\tau_k,\tau_{k+1}]$ between two consecutive stopping times constructed on $[0,T]$, and then concatenate to obtain the conclusion on the whole interval. 
    The Doss-Sussmann technique used in \cite[Theorem 3.7]{duc20} and \cite{duckloeden} ensures 
	that, by a transformation $y_t = \phi_{t,\tau_k}(\bx,z_t)$ there is an one-to-one correspondence between a solution $y_t$ of \eqref{fSDE0} on a certain interval $[\tau_k,\tau_{k+1}]$ 
	and a  solution $z_t$ of the associated ordinary differential equation
	\begin{equation}\label{ascoODE}
		\dot{z}_t = \Big[\frac{\partial \phi }{\partial z}(t,\tau_k,\bx,z_t)\Big]^{-1} f( \phi_{t,\tau_k}(\bx,z_t)) =(Id + \psi_t) f(z_t+\eta_t),\quad t \in [\tau_k,\tau_{k+1}],\ z_{\tau_k} = y_{\tau_k},
	\end{equation}
	where we introduce the notations 
	\[
	\R^d \ni \eta_t := y_t - z_t=\phi_{t,\tau_k}(\bx,z_t)-z_t, \quad \R^{d\times d} \ni \psi_t:=\Big[\frac{\partial \phi }{\partial z}(t,\tau_k,\bx,z_t)\Big]^{-1} - Id,\quad \forall t\in [\tau_k,\tau_{k+1}].
	\]
    By constructing for any fixed $\lambda \in (0,1)$ the sequence of stopping times $\{\tau_k(\frac{\lambda}{16C_pC_g},\bx,[0,T])\}_{i \in \N}$  
	\begin{equation}\label{greedytime}
		\tau_0 = 0,\quad \tau_{k+1}:= \inf\Big\{t>\tau_k:  \ltn \bx \rtn_{\tp, [\tau_k,t]} = \frac{\lambda}{16C_pC_g} \Big\}\wedge T,
	\end{equation}
	  we can apply \eqref{solest} to consider the ODE \eqref{ascoODE} as a nonautonomous perturbation of the original ODE \eqref{eq:abstract} with
	\begin{equation}\label{HK}
		\|\eta_t\|,	\|\psi_t\| \leq \lambda,\quad \forall t \in [\tau_k,\tau_{k+1}].
	\end{equation}
As such, system \eqref{fSDE0} is proved in \cite{duc21}, \cite{duckloeden} to admit a unique continuous pathwise solution, provided that $f$ is locally Lipschitz continuous and of one-sided growth which also satisfies the linear growth in the perpendicular direction
	\begin{equation}\label{lineargrowth}
		\Big\| f(y) - \frac{\langle f(y), y \rangle }{\|y\|^2}y\Big\|\leq C_f \Big(1+\|y\|\Big),\quad \forall y \ne 0
	\end{equation}
	for a certain constant $C_f$. However, condition \eqref{lineargrowth} is difficult to apply in practice, as seen in Fitzhugh-Nagumo or Lorenz systems. This poses the challenge -- which we take up in this paper -- to find an alternative approach. 
    
	To achieve that, the Doss-Sussmann transformation motivates us to construct a Lyapunov function that works for the transformed system \eqref{ascoODE} regardless of how stochastically $\psi_t$ and $\eta_t$ can vary. In other words, the gradient of the Lyapunov function along the transformed vector field \eqref{ascoODE} should be controlled autonomously w.r.t. the perturbations $\psi$ and $\eta$ satisfying \eqref{HK}. For this purpose, a stronger condition than \eqref{classicgradientnegativity} should be imposed in the sense that it should be free of the diffusion part and also the noise. We propose a new notion of {\it strong Lyapunov functions} which requires, apart from controlling the growth of $V$ and its gradient, the existence of a small constant $\lambda \in (0,1)$ and constants $C_\lambda, \delta$ with $\delta  >0$, such that
		\begin{equation}\label{gradientnegativity1}
			\sup \limits_{\substack{\psi \in \R^{d\times d},\|\psi\|\leq \lambda \\\eta \in \R^d,\|\eta\| \leq \lambda}} \langle \nabla V(z), (I + \psi) f(z+\eta)\rangle \leq C_\lambda - \delta V(z),\quad \forall z \in \R^d.
		\end{equation}  
    Obviously, condition \eqref{gradientnegativity1} is stronger than the classical condition \eqref{classicgradientnegativity} (which corresponds to $\lambda =0, C_\lambda = d_1, \delta = d_2$).    
    In applications, it turns out that the strong condition \eqref{gradientnegativity1}  holds for  dissipative drifts that satisfy condition \eqref{lineargrowth}, as seen in \cite[Example 1.2]{duc21}. The condition is also a direct consequence of the classical one \eqref{classicgradientnegativity} in case the drift $f$ is globally Lipschitz continuous (see Theorem \ref{Lipschitzf}) such as the pendulum. Also, when dealing with systems with additive noise, the role of $\psi$ in \eqref{gradientnegativity1} and of the rough path lift $\bx$ can be dropped (see Remark \ref{remadditive}), and we obtain \eqref{gradientnegativity1} from the classical condition \eqref{classicgradientnegativity}, see Theorem \ref{addLyaV} which is applicable for the Lorenz system in Examples \ref{exlorenz} and \ref{lorenzadditive}. 
    For complicated nonlinear drifts $f$ like the FitzHugh-Nagumo neuro system \cite{fitzhugh61}, we prove in Theorem \ref{roughFHNthm} that such an explicit strong Lyapunov function can be found. 
        
    If such a function exists, the stochastic system \eqref{eq:stochFHN} with its pathwise interpretation \eqref{fSDE0} generates a continuous random dynamical system $\varphi$ \cite{arnold}, which admits a global pullback attractor $\cA$ (see Theorem \ref{existenceuniqueness}, Theorem \ref{attractorgbounded2}). Moreover, Theorem \ref{attractorgboundedsemicont} and Theorem \ref{dyadicconvergence} show that the pullback attractor is upper semi-continuous w.r.t. $C_g$ and can be approximated in the pathwise sense using a dyadic approximation. If the drift $f$ is globally Lipschitz continuous, then there exists the numerical random pullback attractor $\cA^\Delta$ for the rough Euler scheme of \eqref{fSDE0} with a regular grid of step size $\Delta$. Moreover, the numerical attractor $\cA^\Delta$ is also upper semi-continuous w.r.t. $C_g$ almost surely and approximates well the continuous attractor $\cA$ as the step size $\Delta$ converges to zero (see Theorem \ref{attractordisc1}). In addition, the topic of local stability can also be studied with the help of strong Lyapunov functions, as proved in Theorem \ref{localstabLya} in Subsection \ref{localstabsec}.	
  
    Since the task of finding a strong Lyapunov function is quite challenging in general, we discuss in Section \ref{Lyanetsec} the approximation of strong Lyapunov functions by Lyapunov neural networks on a compact domain. We prove in Theorem \ref{accLya} that such an approximation is feasible up to an error and a probability arbitrarily close to one.  

    Our paper consistently uses an ODE approach to facilitate reading for people not familiar with rough path calculus. 
	
    \section{Strong Lyapunov functions}
    
    Recall that a function $\kappa: \R_+ \to \R_+$ is a $\mathcal{K}^\prime_\infty$ function if it is continuous, strictly increasing and $\lim \limits_{t \to \infty} \kappa(t) = \infty$. A function $\kappa \in \mathcal{K}^\prime_\infty$ is called a $\mathcal{K}_\infty$ function if, in addition, $\kappa(0)=0$. 
    A function $\kappa \in \mathcal{K}^\prime_\infty$ is called a $\mathcal{K}^{\rm tempered}_\infty$ function if it satisfies 
    \begin{equation}\label{tempered}
        \limsup \limits_{\epsilon \to 0} \limsup \limits_{t \to \infty}  \frac{1}{t} \log \kappa(e^{\epsilon t}) =0.
    \end{equation}
    We call a function  $\kappa \in \mathcal{K}^\prime_\infty$ a $\mathcal{K}^{\rm poly}_\infty$ if there exists constants $C_\kappa,\rho_\kappa >0$ such that 
    \begin{equation}\label{polyest}
        \kappa (t) \leq C_\kappa (1+t^{\rho_\kappa}),\quad \forall t \in \R_+.
    \end{equation}
    Note that if $\kappa \in \mathcal{K}^\prime_\infty$ then so is its inverse function $\kappa^{-1}$. In addition, it is easily seen that 
    \[
    \mathcal{K}^{\rm poly}_\infty \subset \mathcal{K}^{\rm tempered}_\infty. 
    \]
    Indeed, we can prove that (see the proof in Appendix).
    \begin{lemma}\label{Kfunctions}
If $\alpha, \beta \in \mathcal{K}^{\rm tempered}_\infty$ then so does $\beta \circ \alpha$. Also, if $\alpha, \beta \in \mathcal{K}^{\rm poly}_\infty$ then so does $\beta \circ \alpha$.
    \end{lemma}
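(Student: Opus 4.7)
The plan is to handle the two claims separately, first verifying the easy part that $\beta\circ\alpha$ belongs to $\mathcal{K}'_\infty$ (continuity, strict monotonicity, and divergence to $+\infty$ are all inherited from the composition of functions with these properties), and then attacking the growth bound in each case.

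For the polynomial part, I would simply substitute: if $\alpha(t)\le C_\alpha(1+t^{\rho_\alpha})$ and $\beta(s)\le C_\beta(1+s^{\rho_\beta})$, then
\[
\beta(\alpha(t))\le C_\beta\bigl(1+C_\alpha^{\rho_\beta}(1+t^{\rho_\alpha})^{\rho_\beta}\bigr)
\le C_\beta\bigl(1+C_\alpha^{\rho_\beta} 2^{\rho_\beta}(1+t^{\rho_\alpha\rho_\beta})\bigr),
\]
using $(1+x)^{\rho_\beta}\le 2^{\rho_\beta}(1+x^{\rho_\beta})$, which fits the form \eqref{polyest} with exponent $\rho_\alpha\rho_\beta$ and a new constant.

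For the tempered part, the strategy is a careful two-step application of the definition, choosing the intermediate small parameter before the outer one. Fix $\delta>0$. Applying \eqref{tempered} to $\beta$, I first choose $\epsilon'>0$ small enough so that
\[
\limsup_{s\to\infty}\frac{1}{s}\log\beta(e^{\epsilon' s})<\delta,
\]
which gives $s_0$ with $\beta(e^{\epsilon' s})\le e^{\delta s}$ for $s\ge s_0$. Then, applying \eqref{tempered} to $\alpha$ with the target level $\epsilon'$ in place of an arbitrary positive number, I choose $\epsilon>0$ small enough so that $\limsup_{t\to\infty}\frac{1}{t}\log\alpha(e^{\epsilon t})<\epsilon'$, yielding $t_0$ with $\alpha(e^{\epsilon t})\le e^{\epsilon' t}$ for $t\ge t_0$. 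By the monotonicity of $\beta$, for $t\ge\max\{s_0,t_0\}$,
\[
\beta(\alpha(e^{\epsilon t}))\le\beta(e^{\epsilon' t})\le e^{\delta t},
\]
so $\limsup_{t\to\infty}\frac{1}{t}\log(\beta\circ\alpha)(e^{\epsilon t})\le\delta$. Since this holds for all sufficiently small $\epsilon$, the iterated limsup is bounded above by $\delta$; and since $(\beta\circ\alpha)(e^{\epsilon t})\to\infty$ as $t\to\infty$, the inner limsup is non-negative, hence so is the outer one. Letting $\delta\to 0$ gives exactly zero.

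The only subtle point, and the place I expect to need the most care, is the order of quantifiers in the tempered argument: the parameter $\epsilon'$ controlling the asymptotics of $\beta$ must be selected \emph{first} (so that we know the threshold it requires from the temperedness of $\alpha$), and only then can $\epsilon$ be chosen small enough to push $\alpha(e^{\epsilon t})$ below $e^{\epsilon' t}$. Everything else reduces to elementary manipulation of limsups and monotonicity.
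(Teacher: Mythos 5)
Your proposal is correct and follows essentially the same route as the paper: for the tempered case, the same nested choice of parameters (first the intermediate $\epsilon'$ for $\beta$, then $\epsilon$ for $\alpha$) combined with monotonicity of $\beta$, and for the polynomial case the same elementary substitution that the paper dispatches via Young's inequality, yielding exponent $\rho_\alpha\rho_\beta$. The quantifier-order caution you flag is exactly the point the paper's proof is organized around, so no gap remains.
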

    The following assumption is imposed for the strong Lyapunov functions.\\
	
	(${\textbf H}_V$) There exists for a strong Lyapunov function $V \in C^1(\R^d,\R_+)$ for the drift $f$ that satisfies 
	\begin{itemize}
		\item there exists functions $\alpha,\beta \in \mathcal{K}^{\rm tempered}_\infty$ such that $\alpha^{-1}\in \mathcal{K}^{\rm tempered}_\infty$ and
		\begin{equation}\label{Vbound} 
			\alpha(\|z\|) \leq V(z) \leq \beta(\|z\|),\quad \forall z \in \R^d;	
		\end{equation}
		\item there exists a constant $L_V > 0$ such that
		\begin{equation}\label{lipschitzV}
			\| \nabla V(z)\| \leq L_V,\quad \forall z \in \R^d;
		\end{equation}
		\item there exists a constant $\lambda \in (0,1)$ associated with parameters $C_\lambda, \delta >0$ such that
		\begin{equation}\label{gradientnegativity}
			\sup \limits_{\substack{\psi \in \R^{d\times d},\|\psi\|\leq \lambda \\\eta \in \R^d,\|\eta\| \leq \lambda}}  \langle \nabla V(z), (I + \psi) f(z+\eta)\rangle \leq C_\lambda - \delta V(z),\quad \forall z \in \R^d.
		\end{equation}
	\end{itemize}
	\begin{remark}\label{remLgamma}
    i, In general, condition \eqref{gradientnegativity} can be written in the form
    \begin{equation}\label{gradientgeneral}
        \sup \limits_{\substack{\psi \in \R^{d\times d},\|\psi\|\leq \lambda \\\eta \in \R^d,\|\eta\| \leq \lambda}}  \langle \nabla V(z), (I + \psi) f(z+\eta)\rangle \leq \gamma(V(z)),\quad \forall z \in \R^d,
    \end{equation}
    for a certain function $\gamma$ which is one-sided globally Lipschitz continuous w.r.t. the Lipschitz constant $L_\gamma$. As seen in Theorem \ref{existenceuniqueness} in Section \ref{attractorsec}, \eqref{gradientgeneral} is enough to prove the existence and uniqueness theorem. However, for the purpose of attractor theory, we mostly consider in the paper the simplest case $\gamma (u) = C_\lambda -\delta u$.
    
ii, Note that if \eqref{gradientnegativity} is satisfied for a certain $\lambda_0 \in (0,1)$, then it is also satisfied for any $0\leq\lambda < \lambda_0$ with the same $C_{\lambda_0}$ and $\delta$. Also, condition \eqref{gradientnegativity} is  motivated from the observation that 
        \begin{equation}\label{DSeta}
        \eta_t =\phi_{t,\tau_k}(\bx,z_t)-z_t = \int_{\tau_k}^t g(\phi_{s,\tau_k}(\bx,z_t))dx_s,\quad t\in [\tau_k,\tau_{k+1}]
        \end{equation}
        varies as stochastically as $x_t$ on $[\tau_k,\tau_{k+1}]$, a simple example is an additive noise $g(y)\equiv 1$ with $X = B^H$ to be a fractional Brownian motion for $H \in (\frac{1}{3},1)$. We would like to build a time independent mechanism that controls the action of the Lyapunov function along the transformed system \eqref{ascoODE} such that it must be bounded by the right hand side of \eqref{gradientnegativity} regardless of any stopping time interval $[\tau_k,\tau_{k+1}]$. Because of \eqref{HK}, the supremum in the left hand side of \eqref{gradientnegativity} should be imposed.
        
iii, In case of additive noise, i.e. $g(\cdot) \equiv \bar{g}\in \cL(\R^m,\R^d)$ is a constant matrix, it follows from \eqref{DSeta} that 
\begin{equation}\label{etaadditive}
\eta_t = \bar{g}x_{\tau_k,t},\quad \forall t\in [\tau_k,\tau_{k+1}];
\end{equation}
thus $\psi \equiv 0$ and condition \eqref{gradientnegativity} can be modified into a simpler form
		\begin{equation}\label{addgradientnegativity}
			\sup \limits_{\eta \in \R^d,\|\eta\| \leq \lambda}  \langle \nabla V(z), f(z+\eta)\rangle \leq C_\lambda - \delta V(z),\quad \forall z \in \R^d,
		\end{equation}
        for a certain constant $\lambda \in (0,1)$ associated with parameters $C_\lambda, \delta >0$.
	\end{remark}	
We will discuss below several situations in which condition \eqref{gradientnegativity} can be relaxed to \eqref{classicgradientnegativity}.	There are also systems with which one can construct explicit strong Lyapunov function for dealing with additive noise.
	
	\subsection{Globally Lipschitz continuous drifts}\label{subLipschitz}
	We first apply our new concept of Lyapunov functions to the vector field $f$ that is globally Lipschitz continuous, i.e. there exists $C_f$ such that
	\begin{equation}\label{lipschitzf}
		\|f(z_1)-f(z_2)\| \leq C_f\|z_1-z_2\|,\quad \forall z_1,z_2 \in \R^d.
	\end{equation}
	The following theorem asserts that for a globally Lipschitz continuous drift $f$, condition \eqref{gradientnegativity} can be derived from condition \eqref{classicgradientnegativity}.
	\begin{theorem}\label{Lipschitzf}
	i,	Assume that $f$ satisfies \eqref{lipschitzf} and there exists a classical Lyapunov function $V$ satisfying \eqref{classicgradientnegativity}, \eqref{lipschitzV} and \eqref{Vbound} in the form 
        \begin{equation}\label{Vbounds}
        \exists\  \alpha,\beta >0, C\in \R:\quad \alpha \|z\| + C\leq V(z) \leq \beta (1+\|z\|), \quad \forall z\in \R^d.        
        \end{equation}
     Then $V$ is a strong Lyapunov function which satisfies condition \eqref{gradientnegativity} by choosing $\lambda \in (0,1)$ sufficiently small and 
     \begin{equation}\label{Clambdadelta}
         C_\lambda:=  d_1 + L_V \lambda (2C_f \lambda + \|f(0)\|-\frac{C}{\alpha}C_f),\quad \delta:= d_2-\frac{1}{\alpha}L_V C_f\lambda.
     \end{equation}
     ii, Assume the Lipschitz condition \eqref{lipschitzf} for $f$ and there exists a classical Lyapunov function satisfying \eqref{classicgradientnegativity}, \eqref{lipschitzV}, \eqref{Vbound} and further 
        \begin{equation}\label{Vbounds2}
        \exists\  K >0, C\in \R:\quad \frac{1}{K}\|f(z)\| + C\leq V(z), \quad \forall z\in \R^d.        
        \end{equation}
     Then $V$ is a strong Lyapunov function which satisfies condition \eqref{gradientnegativity} by choosing $\lambda$ sufficiently small and 
     \begin{equation}\label{Clambdadelta2}
         C_\lambda:=  d_1 + L_V \lambda (K L_V\lambda -KC+ C_f),\quad \delta:= d_2-KL_V \lambda.
     \end{equation}
	\end{theorem}
      
    	\begin{proof}
		i, Obviously, the lower and upper bound functions in \eqref{Vbounds} are in class $\mathcal{K}_\infty^{\rm poly}$. Observe that it follows from \eqref{lipschitzV} and \eqref{lipschitzf} that
         \allowdisplaybreaks
		\begin{eqnarray}\label{Lipschitzfestimate}
			&&\langle \nabla V(z), (I + \psi)f(z+\eta) \rangle \notag\\
            &=& \langle \nabla V(z), f(z) \rangle + \langle \nabla V(z), \psi f(z+\eta) \rangle + \langle \nabla V(z), f(z+\eta) - f(z) \rangle \notag\\
			&\leq& d_1 - d_2 V(z) + \|\nabla V(z)\| \Big(\|\psi\| \|f(z+\eta)\| + \|f(z+\eta)-f(z)\|\Big)  \notag\\
			&\leq& d_1 - d_2 V(z) + L_V \lambda \Big[ C_f(\|z\| + \|\eta\|) + \|f(0)\| + C_f \|\eta\| \Big] \notag\\
			&\leq& d_1 + L_V \lambda (2C_f \lambda + \|f(0)\|) + L_V C_f\lambda  \|z\| - d_2 V(z) \notag\\
			&\leq& d_1 + L_V\lambda (2C_f \lambda + \|f(0)\|-\frac{C}{\alpha}C_f) + \Big(\frac{1}{\alpha}L_V C_f\lambda - d_2\Big) V(z).
		\end{eqnarray}
		By choosing $\lambda$ small enough such that $\delta, C_\lambda$ in \eqref{Clambdadelta} are positive, we obtain \eqref{gradientnegativity}.\\
	
   ii, Similarly, \eqref{Lipschitzfestimate} is re-estimated as follows
     \allowdisplaybreaks
        \begin{eqnarray*}
			&&\langle \nabla V(z), (I + \psi)f(z+\eta) \rangle \notag\\
            &=& \langle \nabla V(z), f(z) \rangle + \langle \nabla V(z), \psi f(z+\eta) \rangle + \langle \nabla V(z), f(z+\eta) - f(z) \rangle \notag\\
			&\leq& d_1 - d_2 V(z) + \|\nabla V(z)\| \Big(\|\psi\| \|f(z+\eta)\| + \|f(z+\eta)-f(z)\|\Big)  \notag\\
			&\leq& d_1 - d_2 V(z) + L_V \Big[-K\|\psi\|C+\|\psi\|K V(z+\eta) + C_f \|\eta\| \Big] \notag\\
			&\leq& d_1 - KCL_V\lambda- d_2 V(z) + L_V \Big[K\|\psi\| \Big(V(z) + L_V\|\eta\|\Big) + C_f \|\eta\|\Big] \notag\\
			&\leq& d_1 + L_V\lambda (-KC+K L_V\lambda + C_f) + \Big(KL_V\lambda - d_2\Big) V(z).
		\end{eqnarray*}
        By choosing $\lambda$ small enough such that $\delta,C_\lambda$ in \eqref{Clambdadelta2} are positive, we deduce \eqref{gradientnegativity}.\\
    \end{proof}
	\begin{example}\label{penex}[Pendulum system]
		An indirect application is the pendulum system in 2 dimension $z = (v,w) \in \R^2$ given by 
		\begin{equation}\label{pendulum}
			\begin{cases}
				\dot{v}_t &= w_t\\
				\dot{w}_t &= - \sigma^2 \sin v_t - 2 \mu w_t
			\end{cases}
		\end{equation}
		Then $f$ is globally Lipschitz continuous with a certain constant $C_f$. Introduce function 
        \[
        V(z) := \sqrt{1+ \frac{1}{2} w^2 + \sigma^2 (1-\cos v)}. 
        \]
        It is easy to check that $\|\nabla V\|$  are bounded by a constant $L_V = 1+\sigma^2$ and 
    	\[
		\langle \nabla V(z),f(z) \rangle = \frac{1}{2V(z)} \Big[\sigma^2 w\sin v - w(\sigma^2 \sin v + 2 \mu w) \Big] = -\frac{\mu w^2}{V(z)} \leq 2\mu(1+\sigma^2)-2\mu V(z).
		\]
  	Note that $V$ does not satisfy condition \eqref{Vbound} and \eqref{Vbounds} globally since the first inequality in \eqref{Vbound} and in \eqref{Vbounds} does not hold on a domain with unbounded $v$. Thus one can not apply directly Theorem \ref{Lipschitzf}. However, observe that
        \begin{eqnarray*}
        \|f(z)\| &=& [w^2 + (\sigma^2 \sin v + 2\mu w)^2]^{\frac{1}{2}} \\
        &\leq& [w^2 + 2 \sigma^4 (\sin v)^2 + 8 \mu^2 w^2]^{\frac{1}{2}} \\
        &\leq& [(2+16 \mu^2) \frac{1}{2}w^2 +2 \sigma^4 + 2\sigma^6(1-\cos v)]^{\frac{1}{2}} \\
        &\leq& [(2+16\mu^2) \vee 2\sigma^4]^{\frac{1}{2}} V(z) = K V(z).    
        \end{eqnarray*}
    Hence $V$ satisfies \eqref{Vbounds2} and one can apply Theorem \ref{Lipschitzf} to obtain \eqref{gradientnegativity}. 
        \end{example}

    \subsection{Dissipative systems}
    Consider $f$ to satisfy the global Lipschitz continuity and the dissipativity condition, i.e. there exists $d_1\geq 0, d_2 > 0$ such that
		\begin{equation}\label{dissipative}
			\langle z, f(z) \rangle \leq d_1 - d_2 \|z\|^2,\quad \forall z \in \R^d.
		\end{equation}	
    When $f$ is dissipative but not global Lipschitz continuous, one needs to impose additional condition of linear growth in the perpendicular direction \eqref{lineargrowth}, see e.g. \cite[Example 1.2]{duc21} for two conventional examples
\begin{equation*}
    f(y) = \chi y - \|y\|^2 y,\quad \forall y \in \R^d; \quad \text{or} \quad  f(y) 	= \Big(\begin{array}{cc} b y_2 + y_1(a-y_1^2-y_2^2) \\ -b y_1 + y_2(a-y_1^2-y_2^2) \end{array}\Big),\ \forall y = (y_1,y_2)^{\rm T} \in \R^2.
\end{equation*} 
	Then it is easy to check that $V(z) := \sqrt{1+\|z\|^2}$ is a strong Lyapunov function that satisfies conditions \eqref{Vbounds} (due to $\|z\| \leq V(z) \leq \|z\| +1$), \eqref{lipschitzV} (with $\|\nabla V\| \leq 1$) and \eqref{classicgradientnegativity}. \\
    It is proved in \cite[Theorem 2.1, formular (39)]{duc21} that there exists $\bar{C}_\lambda,\delta>0$ such that
    \[
  \sup \limits_{\substack{\psi \in \R^{d\times d},\|\psi\|\leq \lambda \\\eta \in \R^d,\|\eta\| \leq \lambda}}  \langle z, (I + \psi) f(z+\eta)\rangle \leq \bar{C}_\lambda-\delta\|z\|^2,\quad \forall z \in \R^d.
    \]
    Hence, we obtain for our $V(z) := \sqrt{1+\|z\|^2}$ the estimate
           \allowdisplaybreaks
		\begin{eqnarray*}
	&&\sup \limits_{\substack{\psi \in \R^{d\times d},\|\psi\|\leq\lambda \\\eta \in \R^d,\|\eta\| \leq \lambda}}  \langle \nabla V(z), (I + \psi) f(z+\eta)\rangle 
    = \frac{1}{V(z)}\sup \limits_{\substack{\psi \in \R^{d\times d},\|\psi\|\leq\lambda \\\eta \in \R^d,\|\eta\| \leq \lambda}}  \langle z, (I + \psi) f(z+\eta)\rangle \\
    &\leq& \frac{\bar{C}_\lambda-\delta \|z\|^2}{V(z)}
    \leq \frac{\bar{C}_\lambda+\delta-\delta (1+\|z\|^2)}{V(z)}
    \leq \bar{C}_\lambda+\delta -\delta V(z),\quad \forall z \in \R^d,
  	\end{eqnarray*}
  	which shows that $V$ satisfies \eqref{gradientnegativity} and thus is a strong Lyapunov function.	

       \subsection{Systems with additive noises}
    In several situations, it happens that one works with perturbations of additive noises and thus needs to construct a strong Lyapunov function which satisfies condition \eqref{addgradientnegativity}. The following result gives us a criterion to test classical Lyapunov functions satisfying \eqref{classicgradientnegativity}.

    \begin{theorem}\label{addLyaV}
        Assume $V \in \cC^2$ satisfies \eqref{Vbound}, \eqref{lipschitzV} and the classical condition \eqref{classicgradientnegativity}. If in addition,
        \begin{equation}\label{deriv2V}
        \exists C_f >0: \|f(z)\|  \sup_{\eta \in \R^d,\|\eta\|\leq 1}\| D^2 V(z+\eta) \|\leq C_f(1+V(z))\quad \forall z \in \R^d,
        \end{equation}
        then $V$ is a strong Lyapunov function which satisfies \eqref{addgradientnegativity} by choosing $\lambda$ sufficiently small and 
        \begin{equation}\label{addnoiseparam}
            C_\lambda := d_1 +\lambda C_f + L_V \lambda (d_2-C_f\lambda),\quad \delta:= d_2-\lambda C_f.
        \end{equation}
    \end{theorem}
    \begin{proof}
    By Lagrange's mean value theorem, for $\|\eta\|\leq \lambda <1$, there exists $\gamma \in (0,1)$ such that 
    \[
    \nabla V(z+\eta)-\nabla V(z) =D^2 V(z+\gamma\eta) \eta. 
    \]
    Note that $|\gamma-1| \|\eta\| \leq 1$, which yields
$z+\eta-1 \leq z +\gamma \eta \leq z + \eta +1$, or equivalently $z+\gamma \eta \in B(z+\eta,1)$. A direct computation shows that 
        \begin{eqnarray*}
     \langle \nabla V(z), f(z+\eta)\rangle &\leq&  \langle \nabla V(z+\eta), f(z+\eta)\rangle  + \langle \nabla V(z)-\nabla V(z+\eta),f(z +\eta)\rangle \\
     &\leq& d_1 - d_2 V(z +\eta) + \|f(z+\eta)\|\|D^2 V(z+\gamma\eta)\| \|\eta\| \\
     &\leq& d_1 - d_2 V(z +\eta) + \lambda\|f(z+\eta)\|\sup \limits_{\eta^* \in \R^d,\|\eta^*\| \leq 1} \|D^2 V(z+\eta+\eta^*)\| \\
     &\leq& d_1 - d_2 V(z +\eta) + \lambda C_f\Big(1+V(z+\eta)\Big) \\
     &\leq& d_1 +\lambda C_f - (d_2-\lambda C_f) V(z +\eta) \\
     &\leq& d_1 +\lambda C_f +|d_2-\lambda C_f| |V(z)-V(z+\eta)| - (d_2-\lambda C_f) V(z)  \\
     &\leq& d_1 +\lambda C_f + \lambda L_V |d_2-\lambda C_f| - (d_2-\lambda C_f) V(z),
     \end{eqnarray*}
     where the last inequality is due to \eqref{lipschitzV}.
     This shows that $V$ satisfies \eqref{addgradientnegativity} with $\lambda < (\frac{d_2}{C_f} \wedge 1)$ and $C_\lambda, \delta$ satisfying \eqref{addnoiseparam}.
    \end{proof}

     \begin{example}\label{exlorenz}[Lorenz attractor]
           Consider the Lorenz system in three dimension $\xi = (x,y,z)^{\rm T}$ of the form
    \begin{equation}\label{lorenz}
    \begin{cases}
        \dot{x} &= \sigma(y-x) \\
        \dot{y} &= x(\rho-z) -y \\
        \dot{z} &= -\beta z +xy
    \end{cases}    
    \end{equation} 
where $\sigma, \rho, \beta >0$ are parameters. There are some versions of classical Lyapunov functions for system \eqref{lorenz}. We also refer to \cite{schmalfuss97} and \cite{arnoldschmalfuss} for a study of random attractor for system \eqref{lorenz} under stochastic linear Stratonovich noise, by constructing a random Lyapunov function w.r.t. a forward invariant pullback absorbing set. However such a construction is noise dependent.

To construct a strong Lyapunov function candidate for system \eqref{lorenz}, consider the function 
\begin{equation}\label{Lyalorenz}
V(\xi) = \Big[1+\frac{1}{\sigma} x^2 + y^2 + (z-\rho)^2\Big]^{\frac{1}{2}} \geq 1. 
\end{equation}
Then it is easy to check that 
\[
\Big(\frac{1}{\sqrt{1+\rho^2}} \wedge \frac{1}{\sqrt{\sigma}}\Big) \|\xi\| \leq V(\xi)  \leq (\frac{1}{\sqrt{\sigma}} \vee \sqrt{2})\|\xi\| + \sqrt{1+2\rho^2},
\]
 where the bound functions of $V(\xi)$ are in the class $\mathcal{K}_\infty^{\rm poly}$. In addition,
\begin{eqnarray*}
 && \nabla V(\xi) = \frac{1}{V(\xi)} \Big(\frac{1}{\sigma} x,y,z-\rho\Big)\Rightarrow L_V := \sup_{\xi \in \R^3}  \|\nabla V(\xi)\|  \leq (\frac{1}{\sqrt{\sigma}} \vee 1);\\
 && D^2 V(\xi) = \frac{1}{V(\xi)^3}\left(\begin{matrix}
     \frac{1}{\sigma} V - \frac{1}{\sigma^2}x^2 & -\frac{1}{\sigma}xy & -\frac{1}{\sigma} x(z-\rho)\\ -\frac{1}{\sigma}xy& V^2 - y^2 & y(z-\rho) \\ -\frac{1}{\sigma} x(z-\rho) & y(z-\rho)& V- (z-\rho)^2     
 \end{matrix} \right)\Rightarrow \|D^2 V(\xi)\| \leq \frac{\kappa_V}{V(\xi)};\quad \\
 && \|f(\xi)\| \leq \max \Big\{\sigma +1, \rho +1,\beta\Big\} \Big(|x|+|y|+|z|+|xy|+|xz|\Big) \leq \kappa_f V(\xi)^2;
\end{eqnarray*}
for certain constants $\kappa_V,\kappa_f$, i.e. $V$ satisfies conditions \eqref{Vbound}, \eqref{lipschitzV}. On the other hand, there exists $\eta^* \in \R^d, \|\eta^*\| \leq 1$ such that $\min_{\eta \in \R^3,\|\eta\|\leq 1} V(\xi+\eta) = V(\xi+\eta^*)$. Thus
\begin{eqnarray*}
&&\|f(\xi)\|\sup_{\eta \in \R^3,\|\eta\|\leq 1}\|D^2 V(\xi +\eta)\| \leq \kappa_f V(\xi)^2 \sup_{\eta \in \R^3,\|\eta\|\leq 1} \frac{\kappa_V}{V(\xi+\eta)} = \kappa_f \kappa_V V(\xi)^2 \frac{1}{V(\xi+\eta^*)}\\
&\leq& \kappa_f \kappa_V \Big[V(\xi+\eta^*) + L_V\|\eta^*\| \Big]^2 \frac{1}{V(\xi+\eta^*)}\leq \kappa_f \kappa_V \Big[V(\xi+\eta^*) + 2 L_V\|\eta^*\|+ L_V^2 \|\eta^*\|^2\Big] \\
&\leq& \kappa_f \kappa_V \Big[V(\xi) + 3 L_V\|\eta^*\|+ L_V^2 \|\eta^*\|^2\Big] 
\leq \kappa_f \kappa_V (1+3 L_V+3 L_V^2)\big[1+V(\xi)\big]. 
\end{eqnarray*}
This shows that $V$ satisfies \eqref{deriv2V} for $C_f = \kappa_V \kappa_f(1+3 L_V+3 L_V^2)$. Therefore, we conclude from Theorem \ref{addLyaV} that $V$ is a strong Lyapunov function for the Lorenz system \eqref{lorenz}.  

\end{example}
   
 \begin{remark}
     In general, it is a difficult task to find an explicit strong Lyapunov function for the Lorenz system \eqref{lorenz} that satisfies the general condition \eqref{gradientnegativity}. 
 \end{remark}
 
	\subsection{Fitzhugh-Nagumo neuro-system}\label{FHNsec}
	
	In our previous study \cite{ducjostdatmarius}, we consider the long time behaviour of the deterministic  FitzHugh-Nagumo (FHN) neuron model 
	\begin{equation}\label{eq:dFHN}
		\begin{cases}
			\dot{v}_t &= v_t-\displaystyle{\frac{v_t^3}{3}}-w_{t}+I\\
			\dot{w}_t &= \varepsilon (v_t-\mu w_t + J)
		\end{cases}
	\end{equation}
	under a perturbation of stochastic noises. 
    It is well known (see e.g. \cite{ducjostdatmarius}) that the vector field $f$ is dissipative in the sense of \eqref{dissipative}, hence there exists a global attractor for \eqref{eq:dFHN} which contains one or more equilibria. We prove in \cite{ducjostdatmarius} that under a special type of  multiplicative standard Brownian noise, the perturbed system of \eqref{eq:dFHN} admits a global random pullback attractor. The question of existence of attractor is still open when the noise is not of standard Brownian type. Note that system \eqref{eq:dFHN} does not satisfy the linear growth in the perpendicular direction \eqref{lineargrowth}, hence one can not apply directly the arguments in \cite{duc21} and \cite{duckloeden} for the standard Lyapunov function $\bar{V}(y) = \|y\|^2$ to prove the existence and uniqueness theorem. 
    
    Instead, consider a smooth function
	\begin{equation}\label{LyaV}
		V(y):=\Big(1+ v^4 + B w^2\Big)^{\frac{1}{4}},
	\end{equation}
	where $B:= \frac{6}{\varepsilon \mu}$. Observe that $V$ satisfies condition \eqref{Vbound}, where it follows from Cauchy inequality that
	\begin{equation}\label{Vest}
		\begin{split}
			V(z) &\geq (2v^2 + Bw^2)^{\frac{1}{4}} \geq (2 \wedge B)^{\frac{1}{4}} \|z\|^{\frac{1}{2}} \\
			V(z) & \leq (1+\frac{B}{2} + v^4 + \frac{B}{2}w^4)^{\frac{1}{4}} \leq  (1+\frac{B}{2})^{\frac{1}{4}} (1+\|z\|),
		\end{split}
	\end{equation}
where the functions in the right hand sides of \eqref{Vest} are in class $\mathcal{K}_\infty^{\rm poly}$. On the other hand,
	\begin{eqnarray*}
		\nabla V(z) &=& \frac{(4 v^3, 2B w)}{4\Big(1+v^4 + B w^2\Big)^{\frac{3}{4}}}, 
	\end{eqnarray*}
	where one can apply Cauchy inequality to obtain
	\begin{eqnarray*}
		4(1+v^4+Bw^2)^{\frac{3}{4}} &\geq& 4(v^4)^{\frac{3}{4}} = 4|v|^3;\\	
		4(1+v^4+Bw^2)^{\frac{3}{4}} &\geq&	4\Big(1+ \frac{Bw^2}{2} + \frac{Bw^2}{2}\Big)^{\frac{3}{4}}\geq 4\Big(\frac{3 B^{\frac{2}{3}}}{2^{\frac{2}{3}}} |w|^{\frac{4}{3}} \Big)^{\frac{3}{4}} =4 \frac{3^{\frac{3}{4}}B^{\frac{1}{2}}}{2^{\frac{1}{2}}}|w|.
	\end{eqnarray*}
	Thus, $V$ satisfies condition \eqref{lipschitzV}. Moreover we can show that $V$ also satisfies \eqref{gradientnegativity} to conclude the following result.
	\begin{theorem}\label{roughFHNthm}
		For the FHN system, the function $V$ in \eqref{LyaV} is a strong Lyapunov function.
	\end{theorem}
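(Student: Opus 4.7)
Since the excerpt already establishes bounds \eqref{Vbound} and \eqref{lipschitzV} for $V(y)=(1+v^4+Bw^2)^{1/4}$ with $B=6/(\varepsilon\mu)$, my plan is to focus on verifying the uniform gradient-negativity condition \eqref{gradientnegativity}. The key observation is that $4V^3\nabla V=(4v^3,2Bw)$ has no denominator, so I would multiply the inner product by $4V^3$ and work with polynomials in $v,w,\eta,\psi$. Split
\begin{equation*}
4V^3\langle\nabla V(z),(I+\psi)f(z+\eta)\rangle \;=\; N + E_1 + E_2,
\end{equation*}
with $N := 4V^3\langle\nabla V,f(z)\rangle$, $E_1 := 4V^3\langle\nabla V,f(z+\eta)-f(z)\rangle$ and $E_2 := 4V^3\langle\nabla V,\psi f(z+\eta)\rangle$. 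Using $B\varepsilon\mu=6$, a direct computation gives
\begin{equation*}
N \;=\; -\tfrac{4}{3}v^6 + 4v^4 - 4v^3w + 4Iv^3 + \tfrac{12}{\mu}vw - 12w^2 + \tfrac{12J}{\mu}w.
\end{equation*}
Applying Young's inequality to the cross term ($4|v^3w|\le \tfrac{2}{3}v^6+6w^2$) and to each lower-order monomial $v^4$, $v^3$, $vw$, $w$ with parameters chosen small enough that the leading $-\tfrac{4}{3}v^6$ and $-12w^2$ survive, I obtain $N\le -c_1 v^6-c_2 w^2+C_0$ for some $c_1,c_2>0$.

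For the perturbation terms, explicit expansion of $f$ gives
\begin{equation*}
(f(z+\eta)-f(z))_1 = \eta_1 - v^2\eta_1 - v\eta_1^2 - \tfrac{1}{3}\eta_1^3 - \eta_2,\qquad (f(z+\eta)-f(z))_2 = \varepsilon(\eta_1-\mu\eta_2),
\end{equation*}
so every monomial of $E_1$ carries at least one factor $\eta_i$ and is therefore bounded by $\lambda$ times a polynomial in $v,w$; the dominant term $-4v^5\eta_1$ is absorbed by Young's inequality into $v^6$ plus a constant, and analogous absorptions handle the rest, yielding $|E_1|\le\lambda(K_1 v^6+K_2 w^2+K_3)$. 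For $E_2$, Cauchy--Schwarz together with $\|(4v^3,2Bw)\|\le 4|v|^3+2B|w|$ and the uniform bound $\|f(z+\eta)\|\le C(1+|v|^3+|w|)$ for $|\eta|\le\lambda<1$, combined with the AM-GM estimate $|v|^3|w|\le(v^6+w^2)/2$, gives $|E_2|\le\lambda(K_1' v^6+K_2' w^2+K_3')$. Choosing $\lambda$ small enough that $\lambda(K_1+K_1')<c_1$ and $\lambda(K_2+K_2')<c_2$, I conclude
\begin{equation*}
N+E_1+E_2 \;\le\; -\beta_1 v^6-\beta_2 w^2+C'
\end{equation*}
for some $\beta_1,\beta_2>0$ and a constant $C'$, uniformly in $\|\psi\|,\|\eta\|\le\lambda$.

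It remains to translate this bound into the form $C_\lambda-\delta V(z)$. Dividing by $4V^3$ and using $V^4=1+v^4+Bw^2$, the target inequality $\langle\nabla V,(I+\psi)f(z+\eta)\rangle\le C_\lambda-\delta V$ is equivalent to $-\beta_1 v^6+4\delta v^4+(4\delta B-\beta_2)w^2+(C'+4\delta)\le 4C_\lambda V^3$. Picking $\delta<\beta_2/(4B)$ makes the $w^2$ coefficient non-positive, while $-\beta_1 v^6+4\delta v^4$ admits a finite global maximum $M$ in $v\in\R$; the LHS is therefore bounded by the absolute constant $M+C'+4\delta$, which is dominated by $4C_\lambda V^3$ once $C_\lambda\ge(M+C'+4\delta)/4$, using $V\ge 1$. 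The main obstacle is the tight bookkeeping: each Young estimate in $N$, $E_1$ and $E_2$ chips away at the negative coefficients of $v^6$ and $w^2$, so one must verify that the Young parameters and $\lambda$ can be chosen small enough \emph{simultaneously} to keep both coefficients strictly negative. The specific value $B=6/(\varepsilon\mu)$ in the definition of $V$ is tuned precisely so that the $w^2$ coefficient retains a comfortable margin after absorbing the $vw$ cross-term.
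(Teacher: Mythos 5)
Your proposal is correct and follows essentially the same route as the paper's proof: both multiply by $4V(z)^3$ so that $\nabla V$ loses its denominator, isolate the unperturbed part $4v^3(v-\tfrac{1}{3}v^3-w+I)+2B\varepsilon w(v-\mu w+J)$, absorb all $\psi$- and $\eta$-dependent terms (bounded by $\lambda$ times polynomials of degree at most $6$ in $v$ and $2$ in $w$) into the negative leading terms $-\tfrac43 v^6$ and $-2B\varepsilon\mu w^2$ via Young/Cauchy inequalities, choose $\lambda$ small, and divide by $4V^3$ using $V\ge 1$ and $V^4=1+v^4+Bw^2$. The only difference is bookkeeping: you split the perturbation explicitly into $E_1,E_2$ and compare with $V^4$ at the very end via the finite maximum of $-\beta_1v^6+4\delta v^4$, while the paper lumps the perturbations into generic polynomials $Q_1,Q_2,Q_3$ and converts $v^6$ to $v^4$ earlier with the inequality $v^6\ge\tfrac32 v^4-\tfrac12$; both are sound.
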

 
        	\begin{proof}
		With little abuse of notation, one writes 
		\begin{equation}
			\psi = \left( \begin{array}{ccc}
				a & b \\ c& d
			\end{array} \right),\quad z = \left( \begin{array}{ccc}
				v \\ w 
			\end{array} \right),\quad  \eta = \left( \begin{array}{ccc}
				\eta_1 \\ \eta_2
			\end{array} \right),
		\end{equation}
		so that 
		\begin{equation}\label{transformed} 
		(Id + \psi) f(z+\eta) = \left( \begin{array}{ccc}
				1+a & b \\ c& 1+d
			\end{array} \right) \left( \begin{array}{ccc}
				(v+\eta_1) - \frac{1}{3}(v+\eta_1)^3 - (w + \eta_2) + I \\ \varepsilon \Big[(v+\eta_1)- \mu (w+\eta_2) + J\Big]
			\end{array} \right).
		\end{equation}
		Due to \eqref{HK},
		\begin{equation}
			0 \leq |a|, |b|, |c|, |d|, |\eta_1|, |\eta_2| \leq \lambda <1. 
		\end{equation}
	A direct computation to estimate $V$ along the vector field \eqref{transformed} shows that
     \allowdisplaybreaks
		\begin{equation}\label{Vdot}
        \begin{split}
			&	4 V(z)^3 \langle \nabla V(z), (Id + \psi) f(z+\eta)\rangle \\
			= & 4\Big\{(1+a)v^3 \Big[ (v+\eta_1) - \frac{1}{3}(v+\eta_1)^3 - (w + \eta_2) + I\Big] + b \varepsilon v^3  \Big[(v+\eta_1)- \mu (w+\eta_2) + J\Big] \Big\} \\
			& + 2B \Big\{cw\Big[(v+\eta_1) - \frac{1}{3}(v+\eta_1)^3 - (w + \eta_2) + I\Big] +(1+d) \varepsilon w \Big[(v+\eta_1)- \mu (w+\eta_2) + J\Big] \Big\}\\
			\leq & 4 v^3 (v- \frac{1}{3} v^3 - w + I ) + 2B\varepsilon w (v -\mu w+ J) + \lambda Q_1(\lambda,|v|) + \lambda |w|Q_2(\lambda,|v|) + \lambda Q_3(\lambda,|w|) \\
			\leq & 4 v^3 (v- \frac{1}{3} v^3 - w + I ) + 2B\varepsilon w (v -\mu w+ J)+ \lambda Q_1(\lambda,|v|) + \lambda Q_3(\lambda,|w|) + \lambda w^2 + \lambda Q_2(\lambda,|v|)^2 
         \end{split}   
		\end{equation} 
		where $Q_1$ is a polynomial of $|v|$ with order 5, $Q_2$ is a polynomial of $|v|$ with order 3, and $Q_3$ is a polynomial of $|w|$ with order 2; all of them have positive coefficients dependent on $\lambda$. Using the Cauchy inequality, one can easily shows that
		\begin{equation}\label{cauchy}
			|w| \leq \frac{1}{2} w^2 + \frac{1}{2},\quad \text{and} \quad |v|^i \leq \frac{i}{6} v^6 + \frac{6-i}{6},\quad \forall i = 1 \ldots 5.
		\end{equation} 
		Hence, there exists a generic constant $D>1$ (independent of $\lambda$) such that 
		\[
		\lambda Q_1(\lambda,|v|) + \lambda Q_3(\lambda,|w|) 
		+ \lambda w^2 + \lambda Q_2(\lambda,|v|)^2 \leq D (\lambda v^6 + \lambda w^2+ 1).
		\]
		As a result, inequality \eqref{Vdot} can be estimated further as
         \allowdisplaybreaks
		\begin{eqnarray}\label{Vdot2}
			&&	4 V(z)^3 \langle \nabla V(z), (Id + \psi) f(z+\eta)\rangle  \notag\\
			&\leq &  (D \lambda - \frac{4}{3}) v^6 +  4 v^3 w+4v^4+4I v^3 + (D\lambda- 2B\varepsilon \mu) w^2 + 2B\varepsilon J w  + 2B \epsilon w v + D\\
			&\leq & (D \lambda - \frac{4}{3}) v^6  + 4v^4+4I v^3 + (D\lambda- 2B\varepsilon \mu) w^2 + 2B\varepsilon J w + ( v^6 + 4 w^2) + B \varepsilon (\mu w^2 + \frac{1}{\mu} v^2) + D \notag
		\end{eqnarray}
		where the last line of \eqref{Vdot2} follows from Cauchy inequality. In addition, the following inequalities 
		\[
		4v^4+4I v^3+ \frac{B \varepsilon}{\mu} v^2 \leq \lambda v^6 + C_\lambda, \quad  2B\varepsilon J w \leq \lambda w^2 + C_\lambda
		\]
		hold for some generic constant $C_\lambda>1$. Since $D$ is independent of $\lambda$, one now chooses 
		\begin{equation}\label{param}
			\lambda := \frac{1}{12D} < 1,\quad B := \frac{6}{\varepsilon \mu},\quad \delta := \frac{1}{4(B+3)} = \frac{\varepsilon \mu}{12(2+\varepsilon \mu)};
		\end{equation}
		then uses \eqref{cauchy} to estimate \eqref{Vdot2} further as 
         \allowdisplaybreaks
		\begin{align}\label{Vdot3}
		&	4 V(z)^3 \langle \nabla V(z), (Id + \psi) f(z+\eta)\rangle
			 \leq  (D \lambda -  \frac{1}{3}) v^6 + (D \lambda + 4- B \varepsilon \mu) w^2 + C_\lambda \notag\\
			\leq & - \frac{1}{4} (\frac{3}{2}v^4-\frac{1}{2}) + (5-B \varepsilon \mu) w^2 + C_\lambda 
			\leq  C_\lambda - \frac{1}{3} v^4 - w^2 \notag\\
			\leq & C_\lambda +\frac{1}{B+3} - \frac{1}{B+3}(1+v^4+Bw^2)
			\leq  C_\lambda - 4 \delta(1+v^4+Bw^2),
		\end{align}
		for a generic constant $C_\lambda$.	Equivalently,
		\[
		\langle \nabla V(z), (Id + \psi) f(z+\eta)\rangle \leq \frac{C_\lambda - 4\delta V(z)^4}{4 V(z)^3} \leq C_\lambda - \delta V(z)
		\]
        which proves \eqref{gradientnegativity}.
	\end{proof}

     \section{Asymptotic dynamics and pullback attractors}    \label{attractorsec}
	Our first main result is the existence and uniqueness theorem, which is formulated as follows.
		
	\begin{theorem}\label{existenceuniqueness}
		Under the assumptions (${\textbf H}_{f}$), (${\textbf H}_{V}$), (${\textbf H}_{g}$) and for any realization $x$ satisfying (${\textbf H}_{X}$), there exists a solution of \eqref{fSDE0} on any interval $[0,T]$. Moreover, the following estimate holds
	\begin{equation}\label{ymucorr}
		V(y_t)   \leq  e^{-\delta t} \Big[V(y_0) - \frac{C_\lambda}{\delta} \Big] + \frac{C_\lambda}{\delta} +H(\ltn \bx \rtn_{\tp,[0,t]}),\quad \forall t \in [0,T]
	\end{equation}	
	where 
	\begin{equation}\label{H}
		H(\xi) = L_V(16C_pC_g)^p \lambda^{1-p}  \xi^p+ 8 L_VC_pC_g \xi.
	\end{equation}		
	\end{theorem}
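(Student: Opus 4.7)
The plan is to leverage the Doss--Sussmann reduction recalled in the introduction: on each greedy interval $[\tau_k,\tau_{k+1}]$ defined by \eqref{greedytime}, equation \eqref{fSDE0} is equivalent via $y_t=\phi_{t,\tau_k}(\bx,z_t)$ to the random ODE \eqref{ascoODE}, whose vector field is a controlled perturbation of $f$ with $\|\psi_t\|,\|\eta_t\|\le\lambda$ by \eqref{HK}. The strong Lyapunov hypothesis \eqref{gradientnegativity} is tailored precisely to absorb such a perturbation, so it should deliver a linear differential inequality for $V(z_t)$ on every piece. I would then stitch the pieces together, translating between $z$ and $y$ using the Lipschitz estimate \eqref{lipschitzV}, paying a defect of order $\|\eta_t\|$ at each transition.

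The first step is local. Since $f$ is locally Lipschitz by (${\textbf H}_f$) and $\phi_{t,\tau_k}(\bx,\cdot)$ is $C^1$ in its initial value (the property recalled before \eqref{solest}), the right-hand side of \eqref{ascoODE} is locally Lipschitz in $z$, so Picard--Lindel\"of yields a unique local solution. Differentiating $V$ along \eqref{ascoODE} and invoking \eqref{HK} with \eqref{gradientnegativity} gives $\frac{d}{dt}V(z_t)\le C_\lambda-\delta V(z_t)$, and Gronwall then produces
\begin{equation*}
V(z_t)\le e^{-\delta(t-\tau_k)}\Big(V(z_{\tau_k})-\frac{C_\lambda}{\delta}\Big)+\frac{C_\lambda}{\delta}.
\end{equation*}
The coercivity from \eqref{Vbound} combined with the continuity of $\alpha^{-1}$ converts this into a uniform a priori bound on $\|z_t\|$, so $z$ (and hence $y=\phi_{\cdot,\tau_k}(\bx,z)$) extends without blow-up to the whole of $[\tau_k,\tau_{k+1}]$.

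Set $M_k:=V(y_{\tau_k})=V(z_{\tau_k})$ and use \eqref{solest} in the form $\|\eta_t\|\le 16C_pC_g\ltn\bx\rtn_{\tp,[\tau_k,t]}$ together with $V(y_t)\le V(z_t)+L_V\|\eta_t\|$ to relate $z$ and $y$ at $t=\tau_{k+1}$. This yields the recursion
\begin{equation*}
M_{k+1}-\frac{C_\lambda}{\delta}\le e^{-\delta(\tau_{k+1}-\tau_k)}\Big(M_k-\frac{C_\lambda}{\delta}\Big)+16L_VC_pC_g\,\ltn\bx\rtn_{\tp,[\tau_k,\tau_{k+1}]}.
\end{equation*}
Unrolling along all greedy times $\tau_k\le t$, bounding the residual exponentials by $1$, and finally adding the tail correction $L_V\|\eta_t\|$ on $[\tau_N,t]$ produces the desired exponential term plus $L_V$ times a weighted sum of $p$-variation norms over the greedy partition.

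The delicate, and in my view main, step is converting that sum into the polynomial piece of $H$. Here one invokes two standard facts about $p$-variation: superadditivity $\sum_k\ltn\bx\rtn_{\tp,[\tau_k,\tau_{k+1}]}^p\le\ltn\bx\rtn_{\tp,[0,t]}^p$, and the greedy-time condition, which forces every summand to be at most $\lambda/(16C_pC_g)$. Writing each term as $\ltn\bx\rtn_{\tp,[\tau_k,\tau_{k+1}]}=\ltn\bx\rtn_{\tp,[\tau_k,\tau_{k+1}]}^{1-p}\ltn\bx\rtn_{\tp,[\tau_k,\tau_{k+1}]}^p$ and combining these two facts gives
\begin{equation*}
\sum_k \ltn\bx\rtn_{\tp,[\tau_k,\tau_{k+1}]}\le\Big(\frac{\lambda}{16C_pC_g}\Big)^{1-p}\ltn\bx\rtn_{\tp,[0,t]}^p,
\end{equation*}
which, multiplied by $16L_VC_pC_g$, is exactly $L_V(16C_pC_g)^p\lambda^{1-p}\ltn\bx\rtn_{\tp,[0,t]}^p$. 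The tail term $L_V\|\eta_t\|$ on $[\tau_N,t]$ yields the linear contribution; matching the numerical constant $8L_VC_pC_g$ in \eqref{H} is then a matter of invoking the sharper pointwise bound on $\|\phi_{t,\tau_N}(\bx,z_t)-z_t\|$ that underlies \eqref{solest}.
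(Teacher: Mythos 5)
Your proposal is correct and follows essentially the same route as the paper: Doss--Sussmann reduction on the greedy intervals, the strong Lyapunov inequality plus Gronwall/comparison on each piece, a defect of order $L_V\|\eta\|$ at each concatenation, and superadditivity of the $p$-variation (equivalently the count \eqref{Nest} of greedy times with constant defect $L_V\lambda$ per full interval, which is how the paper phrases the same computation) to produce $H$, with the sharper bound on the last partial interval giving the linear term. One small wording fix: since $1-p<0$, the step $\ltn\bx\rtn_{\tp,[\tau_k,\tau_{k+1}]}^{1-p}\le\big(\lambda/(16C_pC_g)\big)^{1-p}$ requires each full greedy interval to satisfy $\ltn\bx\rtn_{\tp,[\tau_k,\tau_{k+1}]}=\lambda/(16C_pC_g)$ (which holds by the definition \eqref{greedytime}), not merely ``at most'' that value.
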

  \begin{proof}
  	 We are going to prove the theorem under the general condition \eqref{gradientgeneral} in Remark \ref{remLgamma} the estimate 
    \begin{equation}\label{genest}
			V(y_t)\leq \Phi^\gamma (t,V(y_0))+ \max \{1, e^{L_\gamma t}\}H(\ltn \bx \rtn_{\tp,[0,t]})
		\end{equation}
       where we denote by $\Phi^{\gamma}(t,u_0)$ the solution of the scalar ODE $\dot{u} = \gamma(u)$ for one sided Lipschitz function $\gamma$ w.r.t. coefficient $L_\gamma$. Later we simply replace $\gamma (u) := C_\lambda -\delta u$ with $L_\gamma := -\delta <0$ to obtain \eqref{ymucorr}. 

       From the existence and uniqueness theorem for the scalar ODE $\dot{u} = \gamma(u)$, $\Phi^{\gamma}(t,u_0)$ is strictly increasing w.r.t. $u_0$. Moreover, it is easy to check from the one-sided globally Lipschitz continuity of $\gamma$ that 
    \begin{equation}\label{gammaest}
        \Phi^{\gamma}(t,u_0) \leq \Phi^{\gamma}(t,u_0 + \bar{\lambda}) \leq \Phi^{\gamma}(t,u_0) + e^{L_\gamma t} \bar{\lambda},\quad \forall t,\bar{\lambda}\in \R_+,\  \forall u_0 \in \R.
    \end{equation}  
		Given any stopping time interval $[\tau_k,\tau_{k+1}]$, there always exists a unique solution of the equations \eqref{fSDE0} as well as \eqref{ascoODE} on some small interval $[\tau_k,\tau_{\rm local}]$, thus we only need to prove that the solution can be extended into the whole interval $[\tau_k,\tau_{k+1}]$. Considering the transformed system \eqref{ascoODE} on the time interval $[\tau_k,\tau_k \wedge \tau_{\rm local}]$, we deduces from condition \eqref{gradientnegativity} that 
		\begin{eqnarray}\label{Vdot4}
			\dot{V}(z_t)  &=&	\langle \nabla V(z_t), (I + \psi_t) f(z_t+\eta_t)\rangle \notag\\		
			&\leq& 	\sup \limits_{\substack{\psi \in \R^{d\times d},\|\psi\|\leq \lambda \\\eta \in \R^d,\|\eta\| \leq \lambda}} \langle \nabla V(z_t), (I + \psi) f(z_t+\eta)\rangle \notag\\
			&\leq& \gamma (V(z_t)),\quad \forall t \in [\tau_k,\tau_k \wedge \tau_{\rm local}].
		\end{eqnarray}
    Hence by the comparision principle, we obtain 
		\begin{equation}\label{deltalambda}
			V(z_t) \leq \Phi^\gamma (t-\tau_k,V(z_{\tau_k})) = \Phi^\gamma (t-\tau_k,V(y_{\tau_k})),\quad \forall t \in [\tau_k,\tau_k \wedge \tau_{\rm local}].
		\end{equation}
		\eqref{deltalambda} implies that $V(z_t)$ is bounded as long as $t\in [\tau_k,\tau_k \wedge \tau_{\rm local}]$, thereby proving the existence and uniqueness of the solution $z_t$ of equation \eqref{ascoODE} on $[\tau_k,\tau_k \wedge \tau_{\rm local}]$, and so is the solution $y_t$ of \eqref{fSDE0} on $[\tau_k,\tau_k \wedge \tau_{\rm local}]$. In addition, whenever $\tau_{k+1} >  \tau_{\rm local}$ then \eqref{HK} is satisfied and the above arguments can be applied to prove the existence and uniqueness of the solution by concatenation, until the interval $[\tau_k,\tau_{k+1}]$ is fully covered. Moreover, \eqref{deltalambda} and \eqref{lipschitzV} implies that 
		\begin{eqnarray}\label{deltalambda2}
			V(y_{\tau_{k+1}}) = V(z_{\tau_{k+1}} + \eta_{\tau_{k+1}}) \leq V(z_{\tau_{k+1}}) + \|\nabla V\| \|\eta_{\tau_{k+1}}\|
            \leq \Phi^\gamma (\tau_{k+1}-\tau_k,V(y_{\tau_k})) + L_V \lambda. 		
            \end{eqnarray}
		Next for each $t \in [0,T]$, by constructing the sequence of greedy times $\{\tau_k\}=\{\tau_k(\frac{\lambda}{16C_pC_g},\bx,[0,t])\}$ and then applying \eqref{deltalambda2}, \eqref{gammaest}, we can prove by induction that 
        \allowdisplaybreaks
		\begin{eqnarray} \label{Vest2}
			V(y_{\tau_{k+1}}) &\leq& \Phi^\gamma (\tau_{k+1}-\tau_k,V(y_{\tau_k})) + L_V \lambda \notag\\
			&\leq& \Phi^\gamma (\tau_{k+1}-\tau_k,\Phi^\gamma (\tau_k-\tau_{k-1},V(y_{\tau_{k-1}})) + L_V \lambda) + L_V \lambda \notag\\
            &\leq& \Phi^\gamma (\tau_{k+1}-\tau_k,\Phi^\gamma (\tau_k-\tau_{k-1},V(y_{\tau_{k-1}}))) +  e^{L_\gamma (\tau_{k+1}-\tau_k)} L_V \lambda+ L_V \lambda \notag\\
            &\leq& \Phi^\gamma (\tau_{k+1}-\tau_{k-1},V(y_{\tau_{k-1}})) +  2  L_V \lambda \max\{e^{L_\gamma (\tau_{k+1}-\tau_i)}: k-1\leq i \leq k+1\} \notag\\
            &\leq& \ldots \notag\\
            &\leq& \Phi^\gamma (\tau_{k+1}-\tau_0,V(y_{\tau_0})) + (k+1) L_V \lambda \max \{e^{L_\gamma (\tau_{k+1}-\tau_i)}: 0\leq i \leq k+1 \}
		\end{eqnarray}
		for $k = 1,\ldots, N(\frac{\lambda}{16C_pC_g},\bx,[0,t])-1$, where $N(\frac{\lambda}{16C_pC_g},\bx,[0,t]):=\sup \{k \in \N: \tau_k \leq t\}$. It is easy to show (see e.g. \cite{duc21}) the estimates 
	\begin{equation}\label{Nest}
		N(\frac{\lambda}{16C_pC_g},\bx,[0,t]) \leq 1 + \Big(\frac{\lambda}{16C_pC_g}\Big)^{-p} \ltn \bx \rtn^p_{\tp,[0,t]}.
	\end{equation}
    Note also that
    \begin{equation}\label{maxestM}
    \max \{e^{L_\gamma (\tau_k-\tau_i)}: 0\leq i \leq k \leq N(\frac{\lambda}{16C_pC_g},\bx,[0,t]) \} = \max \{1, e^{L_\gamma t}\}.    
    \end{equation}
    As a result, one deduces from \eqref{Vest2}, \eqref{Nest} and \eqref{maxestM} that
		\begin{eqnarray*}
			&&V(y_t)= V(y_{\tau_N}) \\
            &\leq& \Phi^\gamma (t,V(y_0)) + \max \{1, e^{L_\gamma t}\} \Big\{\Big[N(\frac{\lambda}{16C_pC_g},\bx,[0,t])-1\Big]L_V \lambda +  8 C_pC_g L_V\ltn \bx \rtn_{\tp,[\tau_{N-1},t]}\Big\}\\
			&\leq& \Phi^\gamma (t,V(y_0))+ \max \{1, e^{L_\gamma t}\}H(\ltn \bx \rtn_{\tp,[0,t]}).
		\end{eqnarray*}
		This proves \eqref{genest}.
	\end{proof}		
    \begin{remark}\label{remadditive}
    \begin{itemize}
        \item Theorem \ref{existenceuniqueness} and estimate \eqref{ymucorr} still hold under the simplified condition \eqref{addgradientnegativity} for stochastic system \eqref{eq:stochFHN} with additive noise. In fact, we do not need to lift the driving $x$ to a rough path $\bx$, but use it directly to construct the stopping times 
        \begin{equation}\label{greedytimeadditive}
		\tau_0 = 0,\quad \tau_{k+1}:= \inf\Big\{t>\tau_k:  \ltn x \rtn_{\tp, [\tau_k,t]} = \frac{\lambda}{16C_pC_g} \Big\}\wedge T.
	\end{equation}
        Since $\eta$ satisfies \eqref{etaadditive}, it follows that $\|\eta_t\| \leq C_g \ltn x\rtn_{\tp,[\tau_k,\tau_{k+1}]} \leq \lambda$. Then \eqref{Nest} and \eqref{ymucorr} hold for the lift $\bx$ to be replaced by the path $x$. 
        
        \item In addition, using \eqref{Vest2} and \eqref{ymucorr}, one deduces from the local Lipschitz continuity of $f$, the continuity in $(t,\bx,z)$ of $\phi_t(\bx,z), \Big[\frac{\partial \phi }{\partial z}(t,\bx,z)\Big]^{-1}$ (see \cite{duc20}, \cite{duc21}) that both $z_t$ and $y_t$ are continuous solutions w.r.t. $t, \bx$ and initial conditions $z_0, y_0$ respectively. In particular, it is proved in \cite{duckloeden} that there exists a constant $C=C(f,g,\ltn \bx\rtn_{\tp,[0,T]},\|y_0\|)>0$ such that 
		\begin{equation}\label{soludiff}
			\|y_\cdot(\bx,y_0) - y_\cdot(\bx^\pi,y_0)\|_{\infty,[0,T]} \leq C \ltn \bx - \bx^\pi \rtn_{\tp,[0,T]}.
		\end{equation}	
	  \end{itemize}
    \end{remark}
	
	Under the random setting in Appendix \ref{roughpath}, it is proved in \cite{BRSch17}, \cite{duc21} that, on a probability space $(\Omega,\mathcal{F},\mathbb{P})$ equipped with a metric dynamical system $\theta$ (which is assumed to be ergodic in this paper), the rough equation \eqref{eq:stochFHN} and \eqref{fSDE0} generates a random dynamical system $\varphi: \R \times \Omega \times \R^d \to \R^d$, $(t,\omega,y_0)\mapsto \varphi(t,\omega)y_0$ defined by $\varphi(t,\omega)y_0 = y_t(\bx(\omega),y_0)$. 
    Since the solution of \eqref{fSDE0} is continuous w.r.t. $(t,\omega,y_0)$, such $\varphi$ is a continuous mapping w.r.t. $(t,\omega,y_0)$ which satisfies the cocycle property
	\begin{equation}\label{cocycle}
		\varphi(t+s,\omega)y_0 =\varphi(t,\theta_s \omega) \circ \varphi(s,\omega)y_0,\quad \forall t,s \in \R, \omega\in \Omega, y_0 \in \R^d.
	\end{equation} 
	Under assumption (${\textbf H}_{V}$), we aim to use Theorem \ref{existenceuniqueness} to prove that the generated RDS $\varphi$ admits a so-called {\it pullback attractor} $\mathcal{A}$, which is an {\it invariant random set} in the sense $\varphi(t,\omega)\mathcal{A}(\omega) = \mathcal{A}(\theta_t \omega)$ for all $t\in \R,\; \omega\in\Omega$ and attracts any closed tempered random set $\hat{D}$ (i.e. $\limsup \limits_{t \to \infty} \frac{1}{t} \log |D(\theta_{-t}\omega)| =0$ \cite[Section 3.2]{duc21}) in the universe $\mathcal{U}$ in the pullback direction, namely
	\begin{equation}\label{pullback}
		\lim \limits_{t \to \infty} d_H(\varphi(t,\theta_{-t}\omega)
		\hat{D}(\theta_{-t}\omega)| \mathcal{A} (\omega)) = 0,
	\end{equation}
	where $d_H(\cdot|\cdot)$ is the Hausdorff semi-distance. A well-known strategy is to prove the existence of a pullback absorbing set $\cB^*$, which absorbs all closed tempered random sets $\hat{D}$ in the sense that there exists a time $t_0 =t_0(\omega,\hat{D})$ such that
	\begin{equation}\label{absorb}
		\varphi(t,\theta_{-t}\omega)\hat{D}(\theta_{-t}\omega) \subseteq
		\cB^* (\omega), \ \textup{for all}\  t\geq t_0.
	\end{equation}
	The pullback attractor	$\mathcal{A}(\omega)$ can then be proved to exist and can be written as follows
	\begin{equation}\label{at}
		\mathcal{A}(\omega) = \bigcap_{t \geq 0} \overline{\bigcup_{s\geq t} \varphi(s,\theta_{-s}\omega)\cB^*(\theta_{-s}\omega)}. 
	\end{equation}	
We first prove the following estimate.
\begin{proposition}\label{propphiest}
    	Under the assumptions of Theorem \ref{existenceuniqueness}, 
        \begin{equation}\label{V2}
			\alpha(|\varphi(t,\theta_{-t}\omega)D(\theta_{-t}\omega)|) \leq \frac{C_\lambda}{\delta} 
			+ e^{-t\delta } \Big[\beta (|D(\theta_{-t}\omega)|)-\frac{C_\lambda}{\delta}\Big]+\bar{R}_\lambda(\omega),\quad \forall t\in \R_+
		\end{equation}
       for any closed tempered random set $D(\omega)$. 
\end{proposition}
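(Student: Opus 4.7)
The plan is to iterate the single-interval estimate \eqref{ymucorr} of Theorem \ref{existenceuniqueness} over consecutive unit intervals, then exploit stationarity of $\bx$ under $\theta$ to rewrite the resulting noise contribution as a geometric-weighted series that defines $\bar{R}_\lambda(\omega)$ independently of $t$. The main obstacle is verifying that this series is almost surely finite, which will rely on the moment hypothesis \eqref{conditionx}.

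Fix an integer $n$ and $y_0 \in D(\theta_{-n}\omega)$, and write $y_k := \varphi(k,\theta_{-n}\omega)y_0$. Since $\gamma(u) = C_\lambda - \delta u$ has $L_\gamma = -\delta < 0$, so that $\max\{1,e^{L_\gamma}\} = 1$, a direct application of \eqref{ymucorr} on $[k,k+1]$ with initial data $y_k$ delivers
\[
V(y_{k+1}) - \tfrac{C_\lambda}{\delta} \leq e^{-\delta}\Bigl[V(y_k) - \tfrac{C_\lambda}{\delta}\Bigr] + H\bigl(\ltn \bx(\theta_{-n}\omega) \rtn_{\tp,[k,k+1]}\bigr).
\]
Iterating this linear recursion for $k = 0, \dots, n-1$ and applying the shift invariance $\ltn \bx(\theta_{-n}\omega) \rtn_{\tp,[k,k+1]} = \ltn \bx(\omega) \rtn_{\tp,[k-n, k-n+1]}$ together with the re-indexing $j = n - k$ yields
\[
V(y_n) \leq \tfrac{C_\lambda}{\delta} + e^{-\delta n}\Bigl[V(y_0) - \tfrac{C_\lambda}{\delta}\Bigr] + \sum_{j=1}^n e^{-\delta(j-1)} H\bigl(\ltn \bx(\omega) \rtn_{\tp,[-j,-j+1]}\bigr).
\]

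The natural candidate is
\[
\bar{R}_\lambda(\omega) := \sum_{j=1}^\infty e^{-\delta(j-1)} H\bigl(\ltn \bx(\omega) \rtn_{\tp,[-j,-j+1]}\bigr),
\]
and the delicate step is showing that $\bar{R}_\lambda(\omega) < \infty$ almost surely: combining the moment bound \eqref{conditionx} with a Borel-Cantelli argument on the stationary sequence $(\ltn \bx(\omega) \rtn_{\tp,[-j,-j+1]})_{j \geq 1}$ forces subexponential growth in $j$, so that, because $H$ is polynomial of degree $p$, the geometric factor $e^{-\delta(j-1)}$ dominates and delivers absolute convergence. To conclude \eqref{V2}, I would take the supremum over $y_0 \in D(\theta_{-n}\omega)$ and invoke \eqref{Vbound}, using $\alpha(\|\varphi(n,\theta_{-n}\omega)y_0\|) \leq V(\varphi(n,\theta_{-n}\omega)y_0)$ on the left-hand side and $V(y_0) \leq \beta(\|y_0\|) \leq \beta(|D(\theta_{-n}\omega)|)$ on the right, where the monotonicity of $\alpha,\beta \in \mathcal{K}_\infty^\prime$ is used. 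For non-integer $t$, one extra application of \eqref{ymucorr} on the fractional interval $[\lfloor t\rfloor, t]$ contributes a bounded term that can be absorbed into $\bar{R}_\lambda(\omega)$, for example by enlarging the $j=1$ summand to cover $[-t,-\lfloor t\rfloor+1]$.
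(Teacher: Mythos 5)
Your argument is the paper's own: iterate the one-step estimate \eqref{ymucorr} over unit intervals via the cocycle property and the shift identity \eqref{roughshift}, collect the noise contributions into the geometric series $\bar{R}_\lambda(\omega)=\sum_{k\geq 0}e^{-k\delta}H(\ltn \bx(\omega)\rtn_{\tp,[-1-k,-k]})$ (your sum over $j$ is this after the reindexing $k=j-1$), and pass to \eqref{V2} through the monotone bounds \eqref{Vbound}. For integer $t$ this coincides with the paper's computation. The almost-sure finiteness of $\bar{R}_\lambda$, which you sketch via \eqref{conditionx} and Borel--Cantelli, is in the paper delegated to a separate statement (Lemma \ref{Rlem}); your sketch is the right idea, modulo the standard but unstated step from the increment moments in \eqref{conditionx} to moments of the $p$-variation norm over unit intervals.

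The one step that does not work as written is the non-integer case. Writing $t=n+s$ with $s\in(0,1)$, you place the fractional application of \eqref{ymucorr} on $[\lfloor t\rfloor,t]$, i.e.\ on the most recent piece $[-s,0]$ in absolute time; the remaining unit steps then produce $H$-terms over the intervals $[-s-j,-s-j+1]$, which straddle the integers and are not contained in any single interval $[-1-k,-k]$ appearing in $\bar{R}_\lambda(\omega)$. Absorbing them would require a subadditivity estimate for the rough-path seminorm across the point $-j$ together with convexity of $H$, which introduces an extra multiplicative constant and hence does not give \eqref{V2} with $\bar{R}_\lambda$ as defined in \eqref{Radius2}. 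Your proposed fix, ``enlarging the $j=1$ summand to cover $[-t,-\lfloor t\rfloor+1]$,'' cannot be meant literally: that interval has length $1+s$ and sits at the far end of the time window, not adjacent to $[-1,0]$. The clean repair is the paper's arrangement: peel off the unit steps at the recent end so that their noise intervals are exactly $[-1,0],[-2,-1],\dots,[-n,-n+1]$, and leave the fractional remainder at the earliest end, where it enters with the factor $e^{-n\delta}$ and with interval $[-n-s,-n]\subset[-n-1,-n]$; by monotonicity of the $p$-variation seminorm in the interval and of $H$, this term is dominated by the $k=n$ summand of $\bar{R}_\lambda(\omega)$. With that modification your proof matches the paper's.
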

\begin{proof}
    Assume (${\textbf H}^b_{g}$) and (${\textbf H}_{X}$) hold. One rewrites \eqref{ymucorr} as
		\begin{equation*}
			V(\varphi(t,\omega)y_0) - \frac{C_\lambda}{\delta}\leq e^{-\delta t} \Big[V(y_0)-\frac{C_\lambda}{\delta}\Big]+ H(\ltn \bx(\omega) \rtn_{\tp,[0,t]}),
		\end{equation*}
		which follows, by replacing $\omega$ by $\theta_{-t-n}\omega$ for any $n \in \N, t \in [0,1]$ and using \eqref{roughshift} that
		 \allowdisplaybreaks
        \begin{eqnarray*}
			&& V(\varphi(n+t,\theta_{-n-t}\omega)y_0) - \frac{C_\lambda}{\delta} \\
			&=& V(\varphi(1,\theta_{-1}\omega) \varphi(n-1+t,\theta_{-n-t+1} \circ \theta_{-1}\omega)y_0)-\frac{C_\lambda}{\delta}\\
			&\leq& e^{-\delta} [V(\varphi(n-1+t,\theta_{-n-t+1} \circ \theta_{-1}\omega)y_0)-\frac{C_\lambda}{\delta}] + H(\ltn \bx(\theta_{-1}\omega) \rtn_{\tp,[0,1]})\\
			&\leq& e^{-\delta} [V(\varphi(n-1+t,\theta_{-n-t+1} \circ \theta_{-1}\omega)y_0)-\frac{C_\lambda}{\delta}] + H(\ltn \bx(\omega) \rtn_{\tp,[-1,0]}).
		\end{eqnarray*}
		Hence by induction, one can easily prove that for any $n \in \N, t\in [0,1]$ that
		 \allowdisplaybreaks
        \begin{eqnarray*}
			&&   V(\varphi(n+t,\theta_{-n-t}\omega)y_0) - \frac{C_\lambda}{\delta} \\
			&\leq& e^{-n \delta} [V(\varphi(t,\theta_{-t} \circ \theta_{-n}\omega)y_0)-\frac{C_\lambda}{\delta}] + \sum_{k=0}^{n-1} e^{-k\delta}H(\ltn \bx(\theta_{-k}\omega) \rtn_{\tp,[-1,0]})\\
			&\leq& e^{-(n+t) \delta} [V(y_0)-\frac{C_\lambda}{\delta}] + H(\ltn \bx(\theta_{-n}\omega)\rtn_{\tp,[-t,0]})+ \sum_{k=0}^{n-1} e^{-k\delta}H(\ltn \bx(\theta_{-k}\omega) \rtn_{\tp,[-1,0]})\\
			&\leq& e^{-(n+t) \delta} [V(y_0)-\frac{C_\lambda}{\delta}] + \sum_{k=0}^{n} e^{-k\delta}H(\ltn \bx(\theta_{-k}\omega) \rtn_{\tp,[-1,0]}).
		\end{eqnarray*}
		This together with \eqref{Vbound} yield 
		 \allowdisplaybreaks
        \begin{eqnarray*}
        &&\alpha(\|\varphi(n+t,\theta_{-n-t}\omega)\cdot\|_{\infty,D(\theta_{-n-t}\omega)}) - \frac{C_\lambda}{\delta} 
			\leq\|V(\varphi(n+t,\theta_{-n-t}\omega)\cdot)\|_{\infty,D(\theta_{-n-t}\omega)} - \frac{C_\lambda}{\delta} \notag\\
			&\leq& e^{-(n+t)\delta} \Big[\|V(\cdot)\|_{\infty,D(\theta_{-n-t}\omega)}-\frac{C_\lambda}{\delta}\Big]+\sum_{k=0}^{n} e^{-k\delta}H(\ltn \bx(\theta_{-k}\omega) \rtn_{\tp,[-1,0]})\notag\\
			&\leq&e^{-(n+t)\delta } \Big[\beta (|D(\theta_{-n-t}\omega)|)-\frac{C_\lambda}{\delta}\Big]+\bar{R}_\lambda(\omega),
		\end{eqnarray*}
        which proves \eqref{V2}. 
\end{proof}

We need several auxiliary results (the proofs are provided in Appendix).
\begin{lemma}\label{thetaL1}
    Given the measure preserving metric dynamical system $\theta$ on the probability space $(\Omega,\mathcal{F},\mP)$ and any random variable $\xi(\cdot)$ on $\Omega$, it holds that

    i, If $|\xi(\omega)|$ is tempered w.r.t. $\omega$, then $\sup \limits_{t\in \R_+} e^{-\delta t} |\xi(\theta_{-t}\omega)|$ is also tempered  w.r.t. $\omega$ for any $\delta >0$.

    ii, If $|\xi(\cdot)| \in \cL^1$ then $\sup \limits_{t\in \R_+} e^{-\delta t} |\xi(\theta_{-t}\cdot)| \in \cL^1$ for any $\delta >0$. 
    
\end{lemma}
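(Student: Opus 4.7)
The plan is to prove the two parts separately, using elementary estimates for part (i) and a dyadic decomposition combined with $\theta$-invariance for part (ii).

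For part (i), I will unpack the definition of temperedness and argue directly. Fix $\varepsilon \in (0,\delta)$ and, on a full-measure $\theta$-invariant set, choose $T_0(\omega)$ so large that $|\xi(\theta_{-u}\omega)| \leq e^{\varepsilon u}$ for all $u \geq T_0(\omega)$; this is possible precisely because $|\xi|$ is tempered. Writing $\eta(\omega) := \sup_{t \in \R_+} e^{-\delta t}|\xi(\theta_{-t}\omega)|$, I evaluate $\eta(\theta_{-s}\omega) = \sup_{t \geq 0} e^{-\delta t}|\xi(\theta_{-(t+s)}\omega)|$ and split the supremum at $t = (T_0(\omega)-s)^+$. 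On the tail $t + s \geq T_0(\omega)$ the pointwise bound $e^{-\delta t}|\xi(\theta_{-(t+s)}\omega)| \leq e^{\varepsilon s} e^{(\varepsilon - \delta)t} \leq e^{\varepsilon s}$ holds because $\varepsilon < \delta$; on the complementary finite range $t \in [0, (T_0(\omega)-s)^+]$ the term is bounded by a random constant $M(\omega) := \sup_{u \in [0, T_0(\omega)]} |\xi(\theta_{-u}\omega)|$, which is finite and independent of $s$. Thus $\eta(\theta_{-s}\omega) \leq M(\omega) + e^{\varepsilon s}$, so $\frac{1}{s}\log^+ \eta(\theta_{-s}\omega) \leq \varepsilon + o(1)$ as $s \to \infty$, and letting $\varepsilon \downarrow 0$ gives temperedness of $\eta$.

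For part (ii), I use a dyadic decomposition $\R_+ = \bigcup_{n \geq 0} [n, n+1)$. Setting $Z(\omega) := \sup_{s \in [0,1]} |\xi(\theta_{-s}\omega)|$ and using monotonicity of $t \mapsto e^{-\delta t}$ together with $\theta_{-t} = \theta_{-s} \circ \theta_{-n}$ for $t = s + n$, I obtain the pointwise bound
\[
\sup_{t \in \R_+} e^{-\delta t}|\xi(\theta_{-t}\omega)| \;\leq\; \sup_{n \geq 0} e^{-\delta n} Z(\theta_{-n}\omega) \;\leq\; \sum_{n=0}^{\infty} e^{-\delta n} Z(\theta_{-n}\omega).
\]
Taking expectations, applying Tonelli, and using $\theta$-invariance of $\mP$ then yields
\[
\E\Big[\sup_{t \in \R_+} e^{-\delta t}|\xi(\theta_{-t}\cdot)|\Big] \;\leq\; \frac{\E[Z]}{1 - e^{-\delta}},
\]
so the $\cL^1$ claim reduces to verifying $\E[Z] < \infty$.

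The main obstacle is exactly this last point: mere $\cL^1$ integrability of $\xi$ does not by itself imply integrability of the unit-interval maximal function $Z$. I would bridge the gap by exploiting the structure of the RDS used in the paper, where $\theta$ is the stationary (Wiener-type) shift on the path space and the random variables $\xi$ to which the lemma is applied (for instance, $\xi(\omega) = H(\ltn \bx(\omega)\rtn_{\tp,[-1,0]})$ in Proposition \ref{propphiest}) are functionals of the rough path on a \emph{fixed} unit window. For such $\xi$, the unit-shift sup $Z(\omega)$ is controlled by the same $p$-variation functional over the enlarged window $[-2,0]$, whose $\cL^1$ norm is finite by the moment bound \eqref{conditionx} and stationarity of increments; hence $\E[Z]<\infty$ and the proof closes. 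If a purely abstract statement is required, one adds to the hypothesis that $t \mapsto \xi(\theta_{-t}\omega)$ is (right-)continuous and the resulting $Z$ is integrable.
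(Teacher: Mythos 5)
Your part (i) is essentially the paper's own argument: fix $\varepsilon\in(0,\delta)$, take the temperedness threshold $T_0(\omega)$, and note that for $s\ge T_0(\omega)$ the shifted supremum is bounded by $e^{\varepsilon s}$. The only difference is that you carry along the constant $M(\omega)=\sup_{u\in[0,T_0(\omega)]}|\xi(\theta_{-u}\omega)|$, whose finiteness (like that of the supremum in the statement itself) tacitly assumes local boundedness of $t\mapsto|\xi(\theta_{-t}\omega)|$; the paper makes the same tacit assumption, and it is harmless for the $\limsup$, which only sees $s\ge T_0(\omega)$, where your bound reduces to $e^{\varepsilon s}$ exactly as in the paper.

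For part (ii) you take a genuinely different route: the dyadic bound $\sup_{t\ge 0}e^{-\delta t}|\xi(\theta_{-t}\omega)|\le\sum_{n\ge0}e^{-\delta n}Z(\theta_{-n}\omega)$ with $Z(\omega)=\sup_{s\in[0,1]}|\xi(\theta_{-s}\omega)|$, then Tonelli and measure preservation, giving $\E[Z]/(1-e^{-\delta})$; the paper instead runs a layer-cake computation, bounding $\sum_{n}(n+1)\mP\big(\sup_t e^{-\delta t}|\xi(\theta_{-t}\cdot)|\in[n,n+1]\big)$ by $\sum_n (n+1)\int_0^\infty\mP\big(e^{-\delta t}|\xi(\theta_{-t}\cdot)|\in[n,n+1]\big)\,dt$ and using invariance. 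The obstacle you flag -- that $\xi\in\cL^1$ alone does not control $\E[Z]$ -- is genuine, and it is a defect of the lemma as stated rather than of your proof: for the translation flow $\theta_t x=x+t\ (\mathrm{mod}\ 1)$ on $[0,1)$ with Lebesgue measure and $\xi(x)=x^{-1/2}$ one has $\xi\in\cL^1$, yet $\sup_{t\ge0}e^{-\delta t}\xi(\theta_{-t}x)=+\infty$ for a.e.\ $x$ (and $\xi$ is not tempered), so the conclusion of (ii) fails. Correspondingly, the paper's proof is not rigorous at two points: the assertion ``integrable, hence tempered'' requires in continuous time the integrability of the unit-window supremum (this is what Arnold's criterion actually uses), and the first inequality of its display is a union bound over a continuum of times that a sharp spike defeats. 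Your resolution -- reduce to $\E[Z]<\infty$ and verify it for the functionals to which the lemma is actually applied, namely $\xi=H(\ltn\bx(\cdot)\rtn_{\tp,[-1,0]})$ or $\bar R_\lambda$, whose unit-window suprema are controlled exactly as in \eqref{Radiusest} via \eqref{roughshift} and the moment bound \eqref{conditionx} -- is the right fix; equivalently, the lemma should carry the hypothesis $\E\sup_{s\in[0,1]}|\xi(\theta_{-s}\cdot)|<\infty$ (or continuity of $t\mapsto\xi(\theta_{-t}\omega)$ together with that moment bound), under which your argument is complete and simpler than the paper's.
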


\begin{lemma}\label{Rlem}
    Assign
     \begin{equation}\label{Radius2}
			\bar{R}_\lambda(\omega) :=   \sum_{k=0}^\infty e^{-k\delta} H(\ltn \bx(\theta_{-k}\omega) \rtn_{\tp,[-1,0]}) =   \sum_{k=0}^\infty e^{-k\delta} H(\ltn \bx(\omega) \rtn_{\tp,[-1-k,-k]}),
		\end{equation}
    for $\delta $ in \eqref{gradientnegativity}. Then $\bar{R}_\lambda$ is finite a.s. such that $\bar{R}_\lambda \in \cL^\rho$ for any $\rho \geq 1$, thus $\bar{R}_\lambda$ is also tempered a.s. Moreover, for any $T>0$,
        \begin{equation}\label{Radiusest}
            \max \limits_{t\in [0,T]} \bar{R}_\lambda(\theta_{-t}\omega) \leq \sum_{k=0}^\infty e^{-k\delta} H(\ltn \bx(\omega) \rtn_{\tp,[-T-1-k,-k]})<\infty\quad  \text{a.s.}
        \end{equation}     
\end{lemma}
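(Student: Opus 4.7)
The plan is to combine stationarity of $\bx$ under $\theta$ with Minkowski's inequality to derive the $\cL^\rho$ bound, then extract temperedness from $\cL^1$ integrability, and finally obtain the pathwise estimate \eqref{Radiusest} by a shift-and-monotonicity argument.

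First, since $\theta$ is measure-preserving and the rough path $\bx$ has stationary increments, for every $k\in\N$ the distributions of $\ltn\bx(\theta_{-k}\cdot)\rtn_{\tp,[-1,0]}$ and $\ltn\bx(\cdot)\rtn_{\tp,[-1,0]}$ coincide. The moment estimate \eqref{conditionx} in (${\textbf H}_X$) together with a standard Kolmogorov / Garsia--Rodemich--Rumsey argument in rough path theory yields $E\ltn\bx\rtn_{\tp,[-1,0]}^{\rho}<\infty$ for every $\rho\ge 1$ (in the canonical examples such as fractional Brownian motion, this is already implied by Gaussianity and Fernique's theorem). Since $H$ in \eqref{H} is a polynomial of degree $p$, this gives $H(\ltn\bx(\theta_{-k}\cdot)\rtn_{\tp,[-1,0]})\in\cL^\rho$ with a $k$-independent $\cL^\rho$-norm. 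Minkowski's inequality then furnishes
\[
\|\bar{R}_\lambda\|_{\cL^\rho}\le\sum_{k=0}^{\infty} e^{-k\delta}\,\|H(\ltn\bx(\theta_{-k}\cdot)\rtn_{\tp,[-1,0]})\|_{\cL^\rho}=\frac{\|H(\ltn\bx\rtn_{\tp,[-1,0]})\|_{\cL^\rho}}{1-e^{-\delta}}<\infty,
\]
which gives both a.s.\ finiteness and $\cL^\rho$-integrability for every $\rho\ge 1$. Temperedness of $\bar{R}_\lambda$ is now a direct consequence of its $\cL^1$-integrability and the measure-preserving character of $\theta$: for any $\varepsilon>0$, Markov's inequality yields $\mP(\bar{R}_\lambda(\theta_{-n}\omega)\ge e^{\varepsilon n})\le e^{-\varepsilon n}\|\bar{R}_\lambda\|_{\cL^1}$, so by Borel--Cantelli one has $\limsup_{n\to\infty}\frac{1}{n}\log\bar{R}_\lambda(\theta_{-n}\omega)\le\varepsilon$ almost surely; letting $\varepsilon\downarrow 0$ together with a monotone interpolation in $t$ yields temperedness. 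This is precisely the content of Lemma \ref{thetaL1}(i)--(ii) applied to $\bar{R}_\lambda$.

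For \eqref{Radiusest}, the stationarity-of-increments identity applied termwise gives the pathwise representation
\[
\bar{R}_\lambda(\theta_{-t}\omega)=\sum_{k=0}^{\infty} e^{-k\delta}\, H\big(\ltn\bx(\omega)\rtn_{\tp,[-1-k-t,\,-k-t]}\big).
\]
For $t\in[0,T]$ the interval $[-1-k-t,-k-t]$ is contained in $[-T-1-k,-k]$, hence monotonicity of the $p$-variation seminorm with respect to its interval and monotonicity of $H$ in its argument supply the desired termwise bound, and summing in $k$ gives \eqref{Radiusest}. Almost-sure finiteness of the right-hand side follows by one further application of Minkowski, using the stationarity identity $\ltn\bx\rtn_{\tp,[-T-1-k,-k]}\stackrel{d}{=}\ltn\bx\rtn_{\tp,[-T-1,0]}$.

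The only non-routine ingredient throughout is the passage from the moment bound \eqref{conditionx} on \emph{increments} to arbitrary moment bounds on the \emph{$p$-variation seminorm} over a fixed interval; this is the crux where one must import a standard rough-path moment inequality (or Gaussian structure as in the fBm case). After that, the remainder of the argument is Minkowski, stationarity, and elementary monotonicity of $p$-variation in the interval.
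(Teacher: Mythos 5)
Your argument is correct and follows the same skeleton as the paper's proof: reduce everything to integrability of $H(\ltn \bx(\cdot)\rtn_{\tp,[-1,0]})$, sum the exponentially weighted series, deduce temperedness from integrability, and obtain \eqref{Radiusest} from the shift property \eqref{roughshift} together with monotonicity of the $p$-variation seminorm in the interval and of $H$ in its argument. The differences are in the supporting machinery: where the paper cites \cite[Lemma 5.2]{congduchong22}, \cite[Lemma 4.1]{congduchong23} and \cite{arnold} for convergence, integrability and temperedness of the series, you give a self-contained Minkowski plus Markov/Borel--Cantelli argument, which is a perfectly good (and arguably more transparent) substitute; and for the a.s.\ finiteness of the right-hand side of \eqref{Radiusest} you invoke stationarity of increments to identify $\ltn\bx\rtn_{\tp,[-T-1-k,-k]}$ in law with $\ltn\bx\rtn_{\tp,[-T-1,0]}$, whereas the paper bounds $H(\xi)\le C_H(1+\xi^p)$ and splits $[-T-1-k,-k]$ into unit blocks via \cite[Lemma 2.1]{congduchong17}, so that only unit-interval moments are ever used -- both routes work, the paper's being marginally more economical in its moment input. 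Two small caveats: your phrase ``monotone interpolation in $t$'' is not literally available since $\bar R_\lambda(\theta_{-t}\omega)$ is not monotone in $t$; the correct patch is exactly the $T=1$ instance of \eqref{Radiusest} applied at $\theta_{-n}\omega$, which dominates $\max_{t\in[n,n+1]}\bar R_\lambda(\theta_{-t}\omega)$ by a stationary integrable variable and then feeds into your Borel--Cantelli step. Finally, the passage from the increment moment bound \eqref{conditionx} to all moments of the $p$-variation norm, which you flag as the crux, is indeed taken for granted in the paper as well (it holds in the Gaussian examples the paper has in mind), so you are not assuming more than the paper does.
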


 \begin{proposition}\label{cBlem}
     The sets
        \begin{eqnarray}
            B_\alpha(\omega)&:=&  B\Big(0,\alpha^{-1} \big(\frac{C_\lambda}{\delta}+ \bar{R}_\lambda(\omega)+\varepsilon\big)\Big); \label{balpha}\\ 
			\bar{\cB}^\varepsilon(\omega) &:=& \overline{\bigcup_{t\geq 0} \varphi(t,\theta_{-t}\omega)B\Big(0,\alpha^{-1} \big(\frac{C_\lambda}{\delta}+ \bar{R}_\lambda(\theta_{-t}\omega)+\varepsilon\big)\Big)},\label{cB2}
		\end{eqnarray}
         for any $\varepsilon > 0$ and any function $\alpha^{-1} \in \mathcal{K}^{\rm tempered}_\infty$ in \eqref{Vbound}, are tempered random set. If in addition $\alpha^{-1},\beta \in \mathcal{K}^{\rm poly}_\infty$, then $|B_\alpha(\cdot)|,|\bar{\cB}^\varepsilon(\cdot)| \in \cL^\rho$ for any $\rho \geq 1$.
 \end{proposition}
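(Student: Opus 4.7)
\medskip
\noindent\emph{Proof plan for Proposition \ref{cBlem}.} The strategy is to bound each set using estimate \eqref{V2} from Proposition \ref{propphiest} applied to $D = B_\alpha$, then transport the temperedness and $\cL^\rho$ properties of $\bar R_\lambda$ (Lemma \ref{Rlem}) through the compositions via Lemmas \ref{Kfunctions} and \ref{thetaL1}.

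First I would handle the simpler set $B_\alpha(\omega)$, whose radius is $r(\omega) := \alpha^{-1}(\tfrac{C_\lambda}{\delta} + \bar R_\lambda(\omega) + \varepsilon)$. Since $\bar R_\lambda$ is tempered (Lemma \ref{Rlem}), for any $\epsilon > 0$ there is a random time beyond which $\bar R_\lambda(\theta_{-t}\omega) \leq e^{\epsilon t}$, so $r(\theta_{-t}\omega) \leq \alpha^{-1}(Ce^{\epsilon t})$ for large $t$ and a deterministic $C$. The defining property \eqref{tempered} of $\alpha^{-1} \in \mathcal{K}^{\rm tempered}_\infty$ gives $\limsup_{\epsilon\to 0}\limsup_{t\to\infty}\tfrac{1}{t}\log\alpha^{-1}(Ce^{\epsilon t}) = 0$, so $r$ is tempered. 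When $\alpha^{-1} \in \mathcal{K}^{\rm poly}_\infty$ one has $r(\omega) \leq C_\kappa(1 + (\tfrac{C_\lambda}{\delta}+\bar R_\lambda(\omega)+\varepsilon)^{\rho_\kappa})$, so the $\cL^\rho$ property of $|B_\alpha(\cdot)|$ follows from $\bar R_\lambda \in \cL^{\rho\rho_\kappa}$ (Lemma \ref{Rlem}).

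For $\bar\cB^\varepsilon(\omega)$, I would apply Proposition \ref{propphiest} with $D = B_\alpha$ (now known to be a tempered random set) to get
\begin{equation*}
|\varphi(t,\theta_{-t}\omega)B_\alpha(\theta_{-t}\omega)| \;\leq\; \alpha^{-1}\Bigl(\tfrac{C_\lambda}{\delta} + e^{-t\delta}\,\beta\!\circ\!\alpha^{-1}\!\bigl(\tfrac{C_\lambda}{\delta} + \bar R_\lambda(\theta_{-t}\omega) + \varepsilon\bigr) + \bar R_\lambda(\omega)\Bigr),
\end{equation*}
using that $\beta(|B_\alpha(\theta_{-t}\omega)|) = \beta\circ\alpha^{-1}(\tfrac{C_\lambda}{\delta}+\bar R_\lambda(\theta_{-t}\omega)+\varepsilon)$ and the monotonicity of $\alpha^{-1}$. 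Taking the supremum over $t \geq 0$ yields
\begin{equation*}
|\bar\cB^\varepsilon(\omega)| \;\leq\; \alpha^{-1}\Bigl(\tfrac{C_\lambda}{\delta} + \sup_{t\geq 0} e^{-t\delta} \xi(\theta_{-t}\omega) + \bar R_\lambda(\omega)\Bigr), \qquad \xi(\omega) := \beta\circ\alpha^{-1}\!\bigl(\tfrac{C_\lambda}{\delta} + \bar R_\lambda(\omega) + \varepsilon\bigr).
\end{equation*}

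The key step is then to show that $\xi$ is tempered (resp.\ in $\cL^\rho$). By Lemma \ref{Kfunctions}, $\beta\circ\alpha^{-1} \in \mathcal{K}^{\rm tempered}_\infty$, so the same argument used for $r(\omega)$ in the first paragraph, applied to $\beta\circ\alpha^{-1}$ in place of $\alpha^{-1}$, shows $\xi$ is tempered. Lemma \ref{thetaL1}(i) then promotes this to temperedness of $\omega \mapsto \sup_{t\geq 0}e^{-t\delta}\xi(\theta_{-t}\omega)$, and a final application of the same ``tempered$\,\circ\,$tempered'' argument with $\alpha^{-1}$ produces temperedness of $|\bar\cB^\varepsilon(\omega)|$. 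For $\cL^\rho$-integrability under the polynomial hypothesis, Lemma \ref{Kfunctions} makes $\beta\circ\alpha^{-1}$ polynomial, giving $\xi(\omega) \leq C(1 + \bar R_\lambda(\omega)^{\rho_*})$, and Lemma \ref{thetaL1}(ii) (with $\bar R_\lambda^{\rho_*} \in \cL^\rho$ from Lemma \ref{Rlem}) yields $\sup_{t\geq 0}e^{-t\delta}\xi(\theta_{-t}\cdot) \in \cL^\rho$. The polynomial growth of $\alpha^{-1}$ then closes the argument.

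The main obstacle is the bookkeeping in this nested composition: one must verify at each layer that the ``tempered-on-tempered'' closure used informally above is justified by \eqref{tempered} with the correct order of limsups, and that the supremum over $t \geq 0$ (which does not obviously commute with $\alpha^{-1}$) is handled by first peeling it off via monotonicity of $\alpha^{-1}$ before invoking Lemma \ref{thetaL1}.
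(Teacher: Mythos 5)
Your proposal is correct and follows essentially the same route as the paper's proof: temperedness of $B_\alpha$ from Lemma \ref{Rlem} plus the composition Lemma \ref{Kfunctions}, the bound on $|\bar{\cB}^\varepsilon(\omega)|$ obtained from \eqref{V2} applied with $D=B_\alpha$, Lemma \ref{thetaL1} to control the $\sup_{t\geq 0}e^{-\delta t}$ term, and the same $\mathcal{K}^{\rm poly}_\infty$/integrability transfer for the $\cL^\rho$ statement. The only deviation (harmless, and in fact slightly cleaner) is that you keep the factor $e^{-t\delta}$ outside and apply Lemma \ref{thetaL1} to the composite random variable $\xi=\beta\circ\alpha^{-1}\big(\tfrac{C_\lambda}{\delta}+\bar R_\lambda(\cdot)+\varepsilon\big)$, whereas the paper first moves $e^{-t\delta}$ onto $\bar R_\lambda(\theta_{-t}\omega)$ inside $\beta\circ\alpha^{-1}$ and then applies Lemma \ref{thetaL1} to $\bar R_\lambda$ itself.
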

 
\begin{proof}
    From Lemma \ref{Rlem}, $\bar{R}_\lambda$ is integrable and tempered a.s. Since $\alpha^{-1}, \bar{R}_\lambda(\theta_{- \cdot}\omega)\in \mathcal{K}^{\rm tempered}_\infty$, it follows that $\alpha^{-1}(\frac{C_\lambda}{\delta} + \bar{R}_\lambda(\theta_{-\cdot}\omega)+\varepsilon)\in \mathcal{K}^{\rm tempered}_\infty$ due to Lemma \ref{Kfunctions}. Hence $B_\alpha(\cdot)$ is a tempered set.

    Next, it follows from \eqref{cB2} and \eqref{V2} and $\alpha^{-1},\beta \circ \alpha^{-1} \in \mathcal{K}^{\rm tempered}_\infty$ that
     \allowdisplaybreaks
    \begin{eqnarray}\label{cBboundexp}
    |\bar{\cB}^\varepsilon(\omega)| &\leq& \sup_{t\in \R_+} |\varphi(t,\theta_{-t}\omega)B_\alpha(\theta_{-t}\omega)|  \notag  \\
    &\leq& \sup_{t\in \R_+} \alpha^{-1} \Big( \frac{C_\lambda}{\delta} 
			+ e^{-t\delta } \Big[\beta (|B_\alpha(\theta_{-t}\omega)|)-\frac{C_\lambda}{\delta}\Big]+\bar{R}_\lambda(\omega) \Big) \notag\\
    &\leq& \sup_{t\in \R_+} \alpha^{-1} \Big( \frac{C_\lambda}{\delta} +\bar{R}_\lambda(\omega)
			+ e^{-t\delta } (\beta\circ \alpha^{-1}) \big(\frac{C_\lambda}{\delta}+\bar{R}_\lambda(\theta_{-t}\omega)+\varepsilon\big) \Big) \notag\\
    &\leq&\alpha^{-1} \Big( \frac{C_\lambda}{\delta} +\bar{R}_\lambda(\omega)
			+  (\beta\circ \alpha^{-1}) \big(\frac{C_\lambda}{\delta}+\sup_{t\in \R_+} e^{-t\delta } \bar{R}_\lambda(\theta_{-t}\omega)+\varepsilon\big) \Big).       
    \end{eqnarray}
    By Lemma \ref{thetaL1}, $\sup\limits_{t\in \R_+} e^{-t\delta } \bar{R}_\lambda(\theta_{-t}\omega)$ is also a tempered random variable. The previous arguments in the first part of the proof for $\alpha^{-1}$ can be applied to prove that $\bar{\cB}^\varepsilon(\omega)$ is a tempered random set. 

     In case $\alpha^{-1}, \beta \in \mathcal{K}^{\rm poly}_\infty$ it follows from \eqref{polyest}, Young's inequality, $\bar{R}_\lambda \in \cL^\rho$ for any $\rho \geq 1$ and Lemma \ref{thetaL1} that 
     \[
     \alpha^{-1}\big(\frac{C_\lambda}{\delta} + \bar{R}_\lambda(\cdot)+\varepsilon\big), \alpha^{-1} \Big( \frac{C_\lambda}{\delta} +\bar{R}_\lambda(\omega)
			+  (\beta\circ \alpha^{-1}) \big(\frac{C_\lambda}{\delta}+\sup_{t\in \R_+} e^{-t\delta } \bar{R}_\lambda(\theta_{-t}\cdot)+\varepsilon\big) \Big) \in \cL^\rho.
    \]
     for any $\rho \geq 1$. This proves $|B_\alpha(\cdot)|,|\bar{\cB}^\varepsilon(\cdot)| \in \cL^\rho$ for any $\rho \geq 1$.\\
\end{proof}
        
	The following result concludes the existence of the global pullback attractor.
	\begin{theorem}\label{attractorgbounded2}
		Under the assumptions of Theorem \ref{existenceuniqueness}, 
        there exists the global pullback attractor $\mathcal{A}$ for the generated random dynamical system $\varphi$ of the stochastic system \eqref{eq:stochFHN} which can be defined by		
        \begin{equation}\label{at4}
			\mathcal{A}(\omega)	= \bigcap_{n=0}^\infty \varphi(n,\theta_{-n}\omega )\bar{\cB}^\varepsilon(\theta_{-n}\omega)= \lim \limits_{n \to \infty} \varphi(n,\theta_{-n}\omega )\bar{\cB}^\varepsilon(\theta_{-n}\omega),	\quad \forall \omega \in \Omega.
		\end{equation}
		where the set $\bar{\cB}^\varepsilon(\omega)$ in \eqref{cB2} is a compact tempered random set which is forward invariant under $\varphi$, i.e. $\varphi(t,\omega)\bar{\cB}^\varepsilon(\omega) = \bar{\cB}^\varepsilon(\theta_t\omega)$ for all $t\in \R_+,\; \omega\in\Omega$. 
        If in addition $\alpha, \beta, \alpha^{-1} \in \mathcal{K}^{\rm poly}_\infty$ then $|\cA(\omega)| \in \cL^\rho$ for any $\rho \geq 1$.
	\end{theorem}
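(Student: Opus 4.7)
The plan is to invoke the standard random pullback attractor theorem, whose hypotheses require a compact, tempered, positively invariant pullback absorbing random set in the universe $\mathcal{U}$ of closed tempered random sets. Proposition~\ref{propphiest} and Proposition~\ref{cBlem} do almost all of the work; what remains is to assemble them into the three ingredients of absorption, invariance, and compactness for $\bar{\cB}^\varepsilon$.

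First I would verify that $B_\alpha(\omega)$ is pullback absorbing. Given any closed tempered $\hat D \in \mathcal{U}$, Lemma~\ref{Kfunctions} together with $\beta \in \mathcal{K}^{\rm tempered}_\infty$ forces $t \mapsto \beta(|\hat D(\theta_{-t}\omega)|)$ to be tempered, so $e^{-t\delta}\beta(|\hat D(\theta_{-t}\omega)|) \to 0$ as $t\to\infty$. Plugging $\hat D$ into \eqref{V2} of Proposition~\ref{propphiest} and using that $\alpha^{-1}$ is increasing, for every $\omega$ there exists $t_0 = t_0(\omega,\hat D)$ such that
\[
\varphi(t,\theta_{-t}\omega)\hat D(\theta_{-t}\omega) \subseteq B\Big(0,\alpha^{-1}\big(\tfrac{C_\lambda}{\delta} + \bar R_\lambda(\omega) + \varepsilon\big)\Big) = B_\alpha(\omega), \quad \forall t \geq t_0,
\]
proving \eqref{absorb} for $B_\alpha$; consequently $\bar{\cB}^\varepsilon(\omega) \supseteq B_\alpha(\omega)$ is absorbing as well.

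Next I would check that $\bar{\cB}^\varepsilon$ is a compact, tempered, positively invariant random set. Temperedness is already granted by Proposition~\ref{cBlem}. The uniform bound \eqref{cBboundexp} shows $\bar{\cB}^\varepsilon(\omega)$ is a bounded subset of $\R^d$; being closed by construction, the Heine–Borel theorem gives compactness. Positive invariance follows from the cocycle property: for any $s\geq 0$, by continuity of $\varphi(s,\omega)$,
\[
\varphi(s,\omega)\bar{\cB}^\varepsilon(\omega) \subseteq \overline{\bigcup_{t\geq 0}\varphi(s,\omega)\varphi(t,\theta_{-t}\omega)B_\alpha(\theta_{-t}\omega)} = \overline{\bigcup_{t\geq 0}\varphi(s+t,\theta_{-(s+t)}\theta_s\omega)B_\alpha(\theta_{-(s+t)}\theta_s\omega)} \subseteq \bar{\cB}^\varepsilon(\theta_s\omega).
\]

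Given these three properties, the classical random pullback attractor theorem (see e.g. \cite{duc21}) applies and yields the global pullback attractor
\[
\mathcal{A}(\omega) = \bigcap_{T\geq 0}\overline{\bigcup_{t\geq T}\varphi(t,\theta_{-t}\omega)\bar{\cB}^\varepsilon(\theta_{-t}\omega)}.
\]
Positive invariance of $\bar{\cB}^\varepsilon$ makes the family $\{\varphi(n,\theta_{-n}\omega)\bar{\cB}^\varepsilon(\theta_{-n}\omega)\}_{n\in\N}$ monotone decreasing in $n$ (since $\varphi(n{+}1,\theta_{-n-1}\omega)\bar{\cB}^\varepsilon(\theta_{-n-1}\omega) = \varphi(n,\theta_{-n}\omega)[\varphi(1,\theta_{-n-1}\omega)\bar{\cB}^\varepsilon(\theta_{-n-1}\omega)] \subseteq \varphi(n,\theta_{-n}\omega)\bar{\cB}^\varepsilon(\theta_{-n}\omega)$), so the decreasing intersection of compact sets collapses to \eqref{at4}. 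Finally, because $\mathcal{A}(\omega) \subseteq \bar{\cB}^\varepsilon(\omega)$, the integrability statement $|\mathcal{A}(\cdot)|\in \cL^\rho$ under $\alpha,\beta,\alpha^{-1}\in \mathcal{K}^{\rm poly}_\infty$ is inherited directly from Proposition~\ref{cBlem}. The only step where anything subtle occurs is verifying positive invariance in the presence of the closure, since one must push the action of $\varphi(s,\omega)$ inside the closure using continuity; but this is routine thanks to the continuity of $\varphi$ established in the remark following Theorem~\ref{existenceuniqueness}.
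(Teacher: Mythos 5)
Your proposal is correct and follows essentially the same route as the paper: pullback absorption of tempered sets into $B_\alpha$ via \eqref{V2}, then compactness, temperedness and forward invariance of $\bar{\cB}^\varepsilon$, the standard pullback attractor existence theorem, the collapse of \eqref{at} to \eqref{at4} by forward invariance, and $|\cA(\cdot)|\in\cL^\rho$ inherited from Proposition \ref{cBlem}. The only (harmless) deviation is your compactness step, which uses the uniform bound \eqref{cBboundexp} together with Lemma \ref{thetaL1} and Heine--Borel directly, whereas the paper truncates the union defining $\bar{\cB}^\varepsilon$ to a finite time window via the self-absorption of $B_\alpha$ and the estimate \eqref{BcapT}.
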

	    \begin{proof}
		It follows from \eqref{V2} that 
        \[
        |\varphi(t,\theta_{-t}\omega)D(\theta_{-t}\omega)| \leq \alpha^{-1}\Big(\frac{C_\lambda}{\delta} 
			+ e^{-t\delta } \Big[\beta (|D(\theta_{-t}\omega)|)-\frac{C_\lambda}{\delta}\Big]+\bar{R}_\lambda(\omega)\Big),\quad \forall t\in \R_+
        \]
        hence any tempered set $D(\cdot)$ is absorbed in the pullback sense into the random ball $B_\alpha(\omega)$ defined in \eqref{balpha} which is compact due to the fact that $\alpha^{-1} \in \mathcal{K}^{\rm tempered}_\infty$. In addition,  it follows from \eqref{V2}, \eqref{Radiusest} and $\alpha^{-1},\beta \in \mathcal{K}^{\rm tempered}_\infty$ that for any interval $[0,T]$,
        \begin{eqnarray*}
            \alpha(|\varphi(t,\theta_{-t}\omega)B_\alpha(\theta_{-t}\omega)|) &\leq&  \frac{C_\lambda}{\delta} 
			+ e^{-t\delta } \Big[\beta (|B_\alpha(\theta_{-t}\omega)|)-\frac{C_\lambda}{\delta}\Big]+\bar{R}_\lambda(\omega)\\
              &\leq& \beta \circ \alpha^{-1} \big(\frac{C_\lambda}{\delta}+\max \limits_{t\in [0,T]}\bar{R}_\lambda(\theta_{-t}\omega) +\varepsilon\big)+\bar{R}_\lambda(\omega)+\frac{C_\lambda}{\delta}.
        \end{eqnarray*}
        This implies
        \begin{equation}\label{BcapT}
       \max \limits_{t\in [0,T]} |\varphi(t,\theta_{-t}\omega)B_\alpha(\theta_{-t}\omega)| \leq \alpha^{-1}\Big(\bar{R}_\lambda(\omega)+ \frac{C_\lambda}{\delta} +(\beta \circ \alpha^{-1}) \big(\frac{C_\lambda}{\delta}+\max \limits_{t\in [0,T]}\bar{R}_\lambda(\theta_{-t}\omega) +\varepsilon\big)\Big)< \infty;
        \end{equation}
        which proves the compactness of $\overline{\bigcup_{t\in [0,T]} \varphi(t,\theta_{-t}\omega)B_\alpha(\theta_{-t}\omega)}$. On the other hand, since $B_\alpha$ is also tempered compact set by Proposition \ref{cBlem}, it is also absorbed in the pullback sense into itself, hence there exists $T(\omega,B_\alpha)>0$ large enough such that 
        \[
        \varphi(t,\theta_{-t}\omega)B_\alpha(\theta_{-t}\omega) \in B_\alpha(\omega),\quad \forall t\geq T(\omega,B_\alpha). 
        \]
        Hence, it follows from \eqref{cB2} that
        \begin{equation}\label{cBupperbound}
       \bar{\cB}^\varepsilon(\omega) \subset \overline{ \bigcup_{t\in [0,T(\omega,B_\alpha)]} \varphi(t,\theta_{-t}\omega)B_\alpha(\theta_{-t}\omega)   }
        \end{equation}
        which proves that $\bar{\cB}^\varepsilon(\omega)$ is a compact set. Its temperedness is followed from Proposition \ref{cBlem}. To prove the forward invariance of $\bar{\cB}^\varepsilon(\omega)$, observe that
         \allowdisplaybreaks
        \begin{eqnarray*}
        \varphi(s,\omega)\bar{\cB}^\varepsilon(\omega) &=& \overline{\bigcup_{t\geq 0} \varphi(s,\omega)\circ \varphi(t,\theta_{-t}\omega)B_\alpha(\theta_{-t}\omega)}\\
            &=&\overline{\bigcup_{t\geq 0} \varphi(s+t,\theta_{-t-s}\circ \theta_s \omega)B_\alpha(\theta_{-t-s}\circ \theta_s\omega)}\\
            &=& \overline{\bigcup_{t\geq s} \varphi(t,\theta_{-t}\circ \theta_s \omega)B_\alpha(\theta_{-t}\circ \theta_s\omega)} \subset \bar{\cB}^\varepsilon(\theta_s\omega),\quad \forall s\in \R_+.
        \end{eqnarray*}      
        As a result, there exists a random pullback attractor $\mathcal{A}$ defined by \eqref{at} w.r.t. $\bar{\cB}^\varepsilon(\omega)$. The forward invariance of $\bar{\cB}^\varepsilon(\omega)$ then proves \eqref{at4}. The conclusion $|\cA(\cdot)| \in \cL^\rho$ is a direct consequence of Proposition \ref{cBlem}.  
	\end{proof}	

    \begin{remark}\label{remradius}
        By choosing $C_g \in (0,\lambda)$, one can write 
        \[
        H(\xi) < C_g \Big[L_V (16C_p\xi)^p + 8 L_VC_p\xi\Big] = C_g H^*(\xi).
        \]
        Hence
        \begin{equation}\label{Rboundexp}
        \bar{R}_{\lambda}(\omega) < C_g R^*(\omega)< \lambda R^*(\omega)\quad \text{where}\quad  R^*(\omega):=\sum_{k=0}^\infty e^{-k\delta} H^*(\ltn \bx(\omega) \rtn_{\tp,[-1-k,-k]}). 
        \end{equation}
        As a result, the right hand side of \eqref{BcapT} is bounded by $\alpha^{-1} \Big(\frac{C_\lambda}{\delta}+1+(\beta \circ \alpha^{-1}) \big(\frac{C_\lambda}{\delta}+1\big) \Big)$, thus the right hand side of \eqref{cBupperbound} is the subset of 
        \[
       \cB_{\alpha,\beta}:= \overline{B\Big(0,\alpha^{-1} \Big(\frac{C_\lambda}{\delta}+1+(\beta \circ \alpha^{-1}) \big(\frac{C_\lambda}{\delta}+1\big) \Big)\Big)}. 
        \]
for $C_g= C_g(T,R^*(\omega))$ small enough. 
        Therefore, $\bar{\cB}^\varepsilon(\omega)$ approaches $\cB_{\alpha,\beta}$ in the sense that
        \begin{equation}\label{limitingset}
     \lim \limits_{C_g \to 0} d_H\Big(\bar{\cB}^\varepsilon (\omega)\Big| \cB_{\alpha,\beta}\Big)=   0 \quad \text{a.s.}
        \end{equation}
    \end{remark}
	
Next, we would like to study the continuity of the attractor w.r.t. the diffusion coefficient $C_g$ as well as the noise approximation. For the first question, denote by $\cA$ and $\cA^0$ are respectively the global attractor of the generated RDS $\varphi$ and the semigroup $\Phi$ of the unperturbed ODE \eqref{eq:abstract}. The following result is a significant improvement of \cite[Theorem 3.4]{duc21} in which the assumptions on global Lipschitz continuity and relative dissipativity of $f$ as well as the uniform convergence to the attractor $\cA^0$ can be omitted, due to \eqref{limitingset}. 
\begin{theorem}\label{attractorgboundedsemicont}
		Under the assumptions of Theorem \ref{existenceuniqueness}, the pullback attractor $\cA$ is upper semi-continuous w.r.t. $C_g$, i.e.
 \begin{equation}\label{attractorsemicont}
     \lim \limits_{C_g \to 0} d_H(\cA(\omega)|\cA^0) = 0\quad \text{a.s.}
 \end{equation}
 If in addition $\alpha,\beta,\alpha^{-1} \in \mathcal{K}^{\rm poly}_\infty$ then 
 \begin{equation}\label{attractorsemicontexpect}
     \lim \limits_{C_g \to 0} \E d_H(\cA(\cdot)|\cA^0) = 0.
 \end{equation}
	\end{theorem}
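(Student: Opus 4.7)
The plan is to combine three ingredients: the deterministic attraction property of $\cA^0$ on the fixed bounded set $\cB_{\alpha,\beta}$, the fact that $\bar{\cB}^\varepsilon(\theta_{-T}\omega)$ shrinks into $\cB_{\alpha,\beta}$ as $C_g\to 0$ (Remark~\ref{remradius}), and the Doss--Sussmann-based convergence of the rough flow to the deterministic flow as $C_g\to 0$. Fix a full-measure $\omega\in\Omega$ and $\epsilon>0$. Since $\cB_{\alpha,\beta}$ is a bounded deterministic set absorbed by $\cA^0$ under $\Phi$, there exists $T=T(\epsilon)>0$, independent of $\omega$ and $C_g$, such that $d_H(\Phi(T)\cB_{\alpha,\beta}|\cA^0)<\epsilon/3$. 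The strict invariance of $\cA$ and $\cA(\theta_{-T}\omega)\subset\bar{\cB}^\varepsilon(\theta_{-T}\omega)$ give
\[
\cA(\omega)=\varphi(T,\theta_{-T}\omega)\cA(\theta_{-T}\omega)\subset\varphi(T,\theta_{-T}\omega)\bar{\cB}^\varepsilon(\theta_{-T}\omega),
\]
while \eqref{cBupperbound} together with \eqref{Rboundexp} imply $\bar{\cB}^\varepsilon(\theta_{-T}\omega)\subset\cB_{\alpha,\beta}$ once $C_g$ is small (depending on $T$ and $R^*(\theta_{-T}\omega)$).

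By the triangle inequality for the Hausdorff semi-distance,
\[
d_H(\cA(\omega)|\cA^0)\le \sup_{y_0\in\cB_{\alpha,\beta}}\|\varphi(T,\theta_{-T}\omega)y_0-\Phi(T)y_0\|+d_H(\Phi(T)\cB_{\alpha,\beta}|\cA^0),
\]
so the task reduces to showing the first supremum tends to $0$ as $C_g\to 0$. For $C_g<\lambda/\bigl(16C_p\ltn\bx(\omega)\rtn_{\tp,[-T,0]}\bigr)$ the whole interval $[-T,0]$ becomes a single greedy interval, and \eqref{solest} yields $\|\psi_t\|,\|\eta_t\|\le 16C_pC_g\ltn\bx(\omega)\rtn_{\tp,[-T,0]}\to 0$. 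By the a~priori bound \eqref{deltalambda} together with $({\textbf H}_V)$, both the solution $z_t$ of \eqref{ascoODE} and $\Phi(t)y_0$ stay in a deterministic compact set $K$ depending only on $T$, $\cB_{\alpha,\beta}$ and the Lyapunov parameters, on which $f$ is Lipschitz by $({\textbf H}_f)$. A standard Gronwall estimate applied to the difference between \eqref{ascoODE} and $\dot{z}=f(z)$ then gives $\sup_{y_0\in\cB_{\alpha,\beta}}\|z_t-\Phi(t)y_0\|_{\infty,[0,T]}=O(C_g)$, and since $\|y_t-z_t\|=\|\eta_t\|=O(C_g)$, the claimed uniform convergence follows. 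Hence $d_H(\cA(\omega)|\cA^0)<\epsilon$ for $C_g$ small enough, which proves \eqref{attractorsemicont}.

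For the expectation statement \eqref{attractorsemicontexpect}, note that $H(\xi)$ is monotone increasing in $C_g$, hence so is $\bar{R}_\lambda(\omega)$; thus for $C_g\in(0,C_g^0]$ the random bound on $|\cA^{(C_g)}(\omega)|$ furnished by Proposition~\ref{cBlem} is dominated by a fixed $\cL^\rho$ random variable depending only on $C_g^0$ (using $\alpha^{-1},\beta\in\mathcal{K}^{\rm poly}_\infty$). Combined with the a.s.\ convergence established above and the boundedness of the fixed deterministic set $\cA^0$, dominated convergence yields $\E\,d_H(\cA(\cdot)|\cA^0)\to 0$.

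The main obstacle is the uniform Doss--Sussmann convergence step: one must ensure that the a~priori compact trapping set $K$ for the transformed ODE does not depend on $C_g$ in a degenerate manner, and that the Lipschitz constant of $f$ on $K$ is deterministic, so that the Gronwall estimate produces an $O(C_g)$ bound uniform in $y_0\in\cB_{\alpha,\beta}$ rather than blowing up. Once this is settled, the remaining ingredients (greedy-time reduction to a single interval and matching $\bar{\cB}^\varepsilon$ with $\cB_{\alpha,\beta}$) follow from the already established estimates in Remark~\ref{remradius}.
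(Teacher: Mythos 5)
Your proposal is correct, and it reaches the conclusion by a genuinely different route in the key comparison step, while sharing the paper's overall skeleton (fix a deterministic attraction time $T$ for $\Phi$, use the strict invariance $\cA(\omega)=\varphi(T,\theta_{-T}\omega)\cA(\theta_{-T}\omega)$ together with $\cA(\theta_{-T}\omega)\subset\bar{\cB}^\varepsilon(\theta_{-T}\omega)\subset\cB_{\alpha,\beta}$ for small $C_g$ from Remark \ref{remradius}, then compare $\varphi$ with $\Phi$ over the window $[-T,0]$, and conclude the expectation statement by dominated convergence with a $C_g$-uniform $\cL^\rho$ majorant). Where you differ: the paper obtains the flow comparison by invoking the coupling estimates of \cite[Proposition 3.5, Theorem 3.4]{duc21}, which carry a factor $(1+\|y_0-\mu_0\|^{2/p})$ and therefore require the Young-inequality trick leading to \eqref{phidiff2}--\eqref{phidiff3}, plus a case distinction between globally and locally Lipschitz $f$ (the local case handled through the random bounded set $\cD(\omega)$ in \eqref{cDbound}); you instead observe that once $C_g<\lambda/\bigl(16C_p\ltn\bx(\omega)\rtn_{\tp,[-T,0]}\bigr)$ the window is a single greedy interval, so $\|\psi_t\|,\|\eta_t\|\le 16C_pC_g\ltn\bx(\omega)\rtn_{\tp,[-T,0]}$, and you run an elementary Gronwall comparison between \eqref{ascoODE} and $\dot u=f(u)$ on a deterministic compact trapping set supplied by the Lyapunov bound \eqref{deltalambda}/\eqref{ymucorr} (indeed $V(z_t)\le\max\{\beta(|\cB_{\alpha,\beta}|),C_\lambda/\delta\}$ there, so the set and the local Lipschitz constant of $f$ on it are non-random). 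This buys a self-contained argument with an explicit $O(C_g)$ rate (with an $\omega$-dependent constant through $\ltn\bx(\omega)\rtn_{\tp,[-T,0]}$, harmless for the a.s.\ statement) and no global/local Lipschitz dichotomy or extension step; what the paper's route buys is reuse of already established coupling machinery, which is then recycled almost verbatim for the discrete-time result (Theorem \ref{attractordisc2}), where your Doss--Sussmann single-interval reduction is not available. Your treatment of \eqref{attractorsemicontexpect} via monotonicity of $H$ in $C_g$ and Proposition \ref{cBlem} is essentially the paper's argument with $R^*$.
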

\begin{proof}
    {\bf Step 1}. First, we would like to prove the result in the simple case that $f$ is globally Lipschitz continuous w.r.t. the Lipschitz constant $C_f$. Following the setting in \cite[Theorem 3.4]{duc21}, we couple each solution $\varphi(t,\omega)y_0$ of \eqref{fSDE0} by a corresponding solution $\Phi(t)y_0$ of the unperturbed equation \eqref{eq:abstract}. Then for any solution $\Phi(t)\mu_0$ starting from a point $\mu_0 \in \cA^0$, it follows from the Lipschitz continuity of $f$ that
    \begin{eqnarray}\label{Phiymu}
\|\Phi(t)y_0-\Phi(t)\mu_0\| &\leq& \|y_0-\mu_0\|e^{C_f t};     \notag   \\
\|\Phi(t)y_0-\Phi(t)\mu_0-\Phi(s)y_0+\Phi(s)\mu_0\| &\leq& \|y_0-\mu_0\|e^{C_f T}(t-s),\quad \forall 0\leq s \leq t \leq T.
    \end{eqnarray}
    By repeating the proof of \cite[Proposition 3.5]{duc21} with some modifications from \eqref{Phiymu}, we obtain the estimate
    \begin{equation}\label{phidiff1}
        \|\varphi(\cdot,\omega)y_0 - \Phi(\cdot)y_0\|_{\infty,[0,T]} \leq C_g \xi(C_f, T,\|f\|_{\infty,\cA^0},\bx(\omega),[0,T])(1+ \|y_0-\mu_0\|^{\frac{2}{p}}),
    \end{equation}
    for all $y_0 \in \R^d,\mu_0\in \cA^0, T \in \R_+,\omega \in \Omega$, where $\xi(C_f, T,\|f\|_{\infty,\cA^0},\bx(\omega),[0,T]) \in \cL^1$ is a generic integrable random variable. Similar to the arguments in the proof of \cite[Theorem 3.4]{duc21}, we apply the Young inequality to the term $C_g \xi(\bx(\omega),[0,T])\|y_0-\mu_0\|^{\frac{2}{p}}$ in the right hand side of \eqref{phidiff1} and deduce that for an arbitrary $\varepsilon \in (0,1)$, there exists a (generic) integrable random varible $\xi(C_f, T,\|f\|_{\infty,\cA^0},\bx(\omega),[0,T],\varepsilon) \in \cL^1$ such that
    \begin{equation}\label{phidiff2}
        \|\varphi(\cdot,\omega)y_0 - \Phi(\cdot)y_0\|_{\infty,[0,T]} \leq \varepsilon \|y_0-\mu_0\| +C_g \xi(C_f, T,\|f\|_{\infty,\cA^0},\bx(\omega),[0,T],\varepsilon),
    \end{equation}
 for all $y_0 \in \R^d,\mu_0\in \cA^0, T \in \R_+,\omega \in \Omega$. It follows from the triangle inequality that
 \[
   \|\varphi(t,\omega)y_0 - z\| \leq \|\Phi(t)y_0-z\| + \varepsilon \|y_0-\mu_0\| +C_g \xi(C_f, T,\|f\|_{\infty,\cA^0},\bx(\omega),[0,T],\varepsilon), 
 \]
for all $t\in [0,T], T\in \R_+, y_0 \in \R^d$ and $z,\mu_0\in \cA^0$. This yields
    \begin{equation}\label{phidiff3}
     d_H(\varphi(T,\omega)y_0|\cA^0) \leq d_H(\Phi(T)y_0|\cA^0)+ \varepsilon d_H(y_0|\cA^0) +C_g \xi(C_f, T,\|f\|_{\infty,\cA^0},\bx(\omega),[0,T],\varepsilon)
    \end{equation}
for all $y_0 \in\R^d, T \in \R_+,\omega \in \Omega$. 

Assign $\cB^1:=\overline{\bigcup_{t\geq 0} \Phi(t)\cB_{\alpha,\beta}}$, then by the same arguments in Theorem \ref{attractorgbounded2}, $\cB^1$ is a compact set and forward invariant under $\Phi$. Because $\cA^0$ is the global attractor of $\Phi$, there also exists for a given $\varepsilon \in (0,1)$ a time $T_{\cB^1,\epsilon}$ such that 
\[
d_H(\Phi(T_{\cB^1,\varepsilon})\cB^1|\cA^0) < \varepsilon. 
\]
We now fix $T:=T_{\cB^1,\varepsilon}$ and $\omega := \theta_{-T_{\cB^1,\varepsilon}}\omega$ and use \eqref{limitingset} to conclude that there exists $C_g  = C_g(\theta_{-T_{\cB^1,\varepsilon}}\omega)\in (0,\lambda)$ small enough such that 
\begin{equation}\label{B1bound}
\cA(\theta_{-T_{\cB^1,\varepsilon}}\omega)\subset \bar{\cB}^\varepsilon(\theta_{-T_{\cB^1,\varepsilon}}\omega) \subset \cB_{\alpha,\beta}\subset \cB^1.   
\end{equation}
Then by applying \eqref{phidiff3} for $\omega := \theta_{-T_{\cB^1,\varepsilon}}\omega, y_0 \in \cA(\theta_{-T_{\cB^1,\varepsilon}}\omega) \subset \cB^1, T:=T_{\cB^1,\varepsilon}$, we obtain from the invariance of $\cA$ and \eqref{B1bound} that
 \allowdisplaybreaks
\begin{equation}\label{estAdiff}
\begin{split}
    d_H(\cA(\omega)|\cA^0) &=d_H(\varphi(T_{\cB^1,\varepsilon},\theta_{-T_{\cB^1,\epsilon}}\omega)\cA(\theta_{-T_{\cB^1,\epsilon}}\omega)|\cA^0) \\
    &= \sup \limits_{y_0 \in \cA(\theta_{-T_{\cB^1,\epsilon}}\omega)} d_H(\varphi(T_{\cB^1,\varepsilon},\omega)y_0|\cA^0) \\
    &\leq \sup \limits_{y_0 \in \cB^1} \Big[d_H(\Phi(T_{\cB^1,\epsilon})y_0|\cA^0)+ \varepsilon d_H(y_0|\cA^0) \Big] \\
    &\qquad+C_g \xi(C_f, T_{\cB^1,\epsilon},\|f\|_{\infty,\cA^0},\bx(\theta_{-T_{\cB^1,\epsilon}}\omega),[0,T_{\cB^1,\epsilon}],\varepsilon) \\
    &\leq d_H(\Phi(T_{\cB^1,\epsilon})\cB^1|\cA^0) + \varepsilon d_H(\cB^1|\cA^0)\\
    &\qquad+C_g \xi(C_f, T_{\cB^1,\epsilon},\|f\|_{\infty,\cA^0},\bx(\omega),[-T_{\cB^1,\epsilon},0],\varepsilon)\\
    &\leq \varepsilon+ \varepsilon d_H(\cB^1|\cA^0) +C_g \xi(C_f, T_{\cB^1,\epsilon},\|f\|_{\infty,\cA^0},\bx(\omega),[-T_{\cB^1,\epsilon},0],\varepsilon)\\
    &\leq 2\varepsilon+ \varepsilon d_H(\cB^1|\cA^0)
\end{split}
\end{equation}
for $C_g  = C_g(\theta_{-T_{\cB^1,\varepsilon}}\omega)\ll 1$ small enough such that $C_g \xi(C_f, T_{\cB^1,\epsilon},\|f\|_{\infty,\cA^0},\bx(\omega),[-T_{\cB^1,\epsilon},0],\varepsilon) < \varepsilon$. As this holds for any given $\varepsilon >0$, \eqref{attractorsemicont} is proved in case $f$ is globally Lipschitz continuous.

{\bf Step 2}. Next, to prove \eqref{attractorsemicont} in case $f$ is only locally Lipschitz continuous, we notice that it suffices to use \eqref{limitingset} to prove \eqref{phidiff1}, which then follows \eqref{phidiff2}, \eqref{phidiff3}, for $T:= T_{\cB^1,\varepsilon}$ and $\omega := \theta_{-T_{\cB^1,\varepsilon}}\omega$ fixed and for $y_0 \in  \bar{\cB}^\varepsilon(\theta_{-T_{\cB^1,\varepsilon}}\omega) \subset \cB^1$, and finally leads to the estimates in \eqref{estAdiff}. Indeed, due to \eqref{V2} and $\bar{\cB}^\varepsilon(\theta_{-T_{\cB^1,\varepsilon}}\omega) \subset \cB^1$,
  \[
       \max \limits_{t\in [0,T_{\cB^1,\varepsilon}]} |\varphi(t,\theta_{-T_{\cB^1,\varepsilon}}\omega)\bar{\cB}^\varepsilon(\theta_{-T_{\cB^1,\varepsilon}}\omega)| \leq \alpha^{-1}\Big(\beta (|\cB^1|)+\bar{R}_\lambda(\omega) \Big) < \infty;
        \]
which yields
\begin{equation}\label{cDbound}
\bigcup_{t \in[0,T_{\cB^1,\varepsilon}] } \varphi(t,\theta_{-T_{\cB^1,\epsilon}}\omega)\bar{\cB}^\varepsilon(\theta_{-T_{\cB^1,\varepsilon}}\omega) \subset \Big\{B\Big(0,\alpha^{-1}\Big(\beta (|\cB^1|)+\bar{R}_\lambda(\omega) \Big)\Big) \bigcup \cB^1\Big\}=:  \cD(\omega).   
\end{equation}
Note that due to the $\Phi$- forward invariance of $\cB^1$, $ \Phi(\cdot)y_0 \in \cB^1$ for any $y_0 \in  \bar{\cB}^\varepsilon(\theta_{-T_{\cB^1,\varepsilon}}\omega) \subset \cB^1$. In that scenerios, the estimates in \eqref{Phiymu} still hold but for $C_f$ is replaced by the local Lipschitz constant $C(f,\cD(\omega))$ and $T$ is replaced by $T_{\cB^1,\epsilon}$. As a result, a similar estimate to \eqref{phidiff1} has the form
\begin{equation}\label{pullbackformest}
\begin{split}
  &\|\varphi(\cdot,\theta_{-T_{\cB^1,\epsilon}}\omega)y_0 - \Phi(\cdot)y_0\|_{\infty,[0,T_{\cB^1,\epsilon}]}\\
  \leq &C_g \xi(C(f,\cD(\omega)), T_{\cB^1,\epsilon},\|f\|_{\infty,\cD(\omega)},\bx(\theta_{-T_{\cB^1,\epsilon}}\omega),[0,T_{\cB^1,\epsilon}])(1+ \|y_0-\mu_0\|^{\frac{2}{p}}), 
\end{split}
 \end{equation}
 which follows similar estimates to \eqref{phidiff2} and \eqref{phidiff3} for such $\xi$ in \eqref{pullbackformest}.
 
 {\bf Step 3}. Finally, in case $\alpha, \beta, \alpha^{-1} \in \mathcal{K}^{\rm poly}_\infty$, it follows from \eqref{cBboundexp} and \eqref{Rboundexp} that for any $C_g \in (0,\lambda)$
 \[
\E |\cA(\cdot)| \leq \E |\bar{\cB}^\varepsilon(\cdot)| \leq \E\alpha^{-1} \Big( \frac{C_\lambda}{\delta} +\lambda R^*(\omega)
			+  (\beta\circ \alpha^{-1}) \big(\frac{C_\lambda}{\delta}+\sup_{t\in \R_+} e^{-t\delta } \lambda R^*(\theta_{-t}\omega)+\varepsilon\big) \Big) < \infty. 
 \]
 The Lebesgue's dominated convergence theorem \cite[Theorem D, pp. 110]{halmos} is then applied to conclude \eqref{attractorsemicontexpect}.
\end{proof}
\begin{remark}
A key point in the proof of Theorem \ref{attractorgboundedsemicont} is the limit \eqref{limitingset} which provides a bound $\cB^1$ for the pullback absorbing set $\bar{\cB}^\varepsilon$ and later derives \eqref{cDbound} by using \eqref{V2}. Since \eqref{limitingset} and \eqref{ymucorr} are derived from Theorem \ref{existenceuniqueness} and Theorem \ref{attractorgbounded2}, they need assumption \eqref{gradientnegativity} for our strong Lyapunov function. There is a chance to derive \eqref{limitingset} from \eqref{phidiff1} and \eqref{phidiff2} using only the classical condition \eqref{classicgradientnegativity}, but for this one needs a global Lipschitz coefficient $C_f$ to obtain \eqref{Phiymu}. 
\end{remark}
\begin{example}\label{FHNstochex}[Fitzhugh Nagumo system revisited] We reconsider the Fitzhugh Nagumo model in Subsection \ref{FHNsec} in the excitable regime where the unique fixed point $\ta^*$ is stable \cite{ducjostdatmarius}. By Theorem \ref{attractorgbounded2} and Theorem \ref{attractorgboundedsemicont}, the stochastic FHN system \eqref{eq:stochFHN} admits a random attractor $\cA(\omega) \in \cL^\rho$ for any $\rho \geq 1$ and $\lim \limits_{C_g \to 0} \cA(\cdot) = \ta^*$ both in the almost sure sense and in the $\cL^\rho$ sense, i.e. $\cA$ is most likely a singleton random pullback attractor. Since $\cA$ is invariant under $\varphi$, $\cA(\theta_t \omega)$ might experience a spike triggered by noise and travel a large excursion into the phase space before returning to the neighborhood of the fixed point $\ta^*$. The frequency of spiking phenomenon decreases as $C_g$ decreases.   
\end{example}

\begin{example}\label{lorenzadditive}[Lorenz system revisited] We reconsider the Lorenz system \eqref{lorenz} under an additive noise and follow Appendix \ref{roughpath} to construct a random dynamical system w.r.t. the additive noise. By Theorem \ref{attractorgbounded2} and Theorem \ref{attractorgboundedsemicont}, the stochastic Lorenz system \eqref{eq:stochFHN} admits a random attractor $\cA(\omega) \in \cL^\rho$ for any $\rho \geq 1$ and $\lim \limits_{C_g \to 0} \cA(\cdot) = \cA^0$ both in the almost sure sense and in the $\cL^\rho$ sense, where $\cA^0$ is the classical Lorenz attractor. 

While it is not easy to find a strong Lyapunov function for Lorenz system with general multiplicative noise, note that the existence of the random Lorenz attractor is proved in \cite{schmalfuss97} for the multiplicative Stratonovich noise of the form $\chi y \circ dB$ for a scalar standard Brownian motion and a constant $\chi$. Here in our setting, we can easily extend the results to the form $(\chi y +\bar{g})dx$ for a scalar realization $x$ of a scalar stochastic process $X$ satisfying (${\textbf H}_X$), and define $C_g := |\chi| \vee |\bar{g}|$. Since we can solve the pure rough differential equation $d \phi = (\chi \phi + \bar{g})dx$ for an explicit solution, we obtain the explicit Doss-Sussmann transformation as follows 
\[
\xi_t = \phi(t,\tau_k,\bx,\bar{\xi}_t)= e^{\chi x_{\tau_k,t}} \bar{\xi}_t + \bar{g} \int_{\tau_k}^t e^{\chi x_{s,t}}dx_s = \frac{1}{1+\psi_t}\bar{\xi}_t + \eta_t,\quad \forall t\in [\tau_k,\tau_{k+1}]. 
\]
The transformed ODE has the form $\dot{\bar{\xi}}_t = (1+\psi_t)f(\frac{1}{1+\psi_t}\bar{\xi}_t + \eta_t)$. With the same Lyapunov function $V(\xi)$ as in \eqref{Lyalorenz}, a direct computation shows that, for all $|\psi|,\|\eta\| \leq \lambda$,
\begin{eqnarray}\label{Vproplorenz}
  &&V(\frac{1}{1+\psi}\bar{\xi} + \eta) \leq V(\frac{1}{1+\psi}\bar{\xi}) + L_V \|\eta\| \leq (1+\lambda) V(\bar{\xi}) + L_V \lambda; \notag\\
&&\|f(\xi + \eta) - f(\xi)\|\leq \lambda \max \{\sigma, \rho,\beta,1\} (1+\|\xi\|);\notag\\
&&\langle \nabla V(\bar{\xi}),(1+\psi)f(\frac{1}{1+\psi}\bar{\xi} + \eta) \rangle \leq C_\lambda - \delta V(\bar{\xi});
\end{eqnarray}
where the last inequality holds for certain constants $C_\lambda, \delta >0$. As a result, the estimate \eqref{ymucorr} is still correct upto a term $(1+\lambda)^{N(\frac{\lambda}{16C_pC_g},\bx,[0,t])} \leq \exp \{\lambda N(\frac{\lambda}{16C_pC_g},\bx,[0,t])\}$ in the right hand side. As a result, the conclusions in Theorem \ref{attractorgbounded2} and Theorem \ref{attractorgboundedsemicont} still hold with some minor modifications in the proof, where one can choose $\lambda:= C_g$ for $C_g$ sufficiently small.
\end{example}

Regarding to noise approximation, we would like to consider for any rough path $\omega \in \Omega$ the dyadic approximation $\{\pi^{(n)}(\omega)\}_{n \in \N} $ with 
	\[
	x(\pi^{(n)}(\omega))_{t} = x(\omega)_{\frac{k}{2^n}} + (2^n t-k) x(\omega)_{\frac{k}{2^n},\frac{k+1}{2^n}},\quad \forall t \in \Big[\frac{k}{2^n},\frac{k+1}{2^n}\Big], k \in \Z
	\] 
	and its natural lift $\mathcal{S}(x(\pi^{(n)}(\omega)))$ where
	\[
	\mathcal{S}(x)_{s,t} = (1,x_{s,t},\int_s^t x_{s,r}dx_r),\quad \forall s\leq t, s,t \in \R.
	\]
	It follows from \cite[Theorem 5.20]{friz} that 
	\begin{equation}\label{uniformbounded}
		\begin{split}
			\sup_{n \in \N} \ltn x(\pi^{(n)}(\omega)) \rtn_{\tp,[a,b]} & \leq 3^{1-\frac{1}{p}} \ltn x(\omega) \rtn_{\tp,[a,b]},\\ 	
			\sup_{n \in \N} \ltn x(\pi^{(n)}(\omega)) \rtn_{\alpha,[a,b]} & \leq 3^{1-\alpha} \ltn x(\omega) \rtn_{\alpha,[a,b]},\quad \forall a\leq b, a,b \in \N. 
		\end{split}
	\end{equation}
	By the Borel-Cantelli lemma and \cite[Chapter 12]{friz}, it follows that for almost sure all $\omega \in \Omega$
	\[
	\lim \limits_{n \to \infty} d_{\tp,[a,b]} (\pi^{(n)}(\omega) ,\omega) = 	\lim \limits_{n \to \infty} d_{\alpha,[a,b]} (\pi^{(n)}(\omega) ,\omega)= 0,\quad \forall a\leq b, a, b \in \N.
	\]
	The definition of $\pi^{(n)}(\omega)$ yields
	\begin{equation}\label{dyadicshift}
		\Big(\theta_{-i} \pi^{(n)}(\omega)\Big)_{\frac{k}{2^n}} = \Big(\pi^{(n)}(\theta_{-i}\omega)\Big)_{\frac{k}{2^n}} =\Big(\theta_{-i} \omega\Big)_{\frac{k}{2^n}},\quad \forall i, n \in \N, k \in \Z.
	\end{equation}
	As proved in Theorem \ref{attractorgbounded2}, there exist random sets $\bar{\cB}(\omega),\mathcal{A}(\omega)$ and $\bar{\cB}(\bx^{(n)}(\omega)), \mathcal{A}(\bx^{(n)}(\omega))$ for $\omega \in \Omega$. We conclude the upper semi-continuity of the attractor with respect to the dyadic approximation as follows.
	\begin{theorem}\label{dyadicconvergence}
		Under the assumptions of Theorem \ref{existenceuniqueness}, the following limit holds
		\begin{equation}\label{Alim}
			\lim \limits_{n \to \infty} d_H(\mathcal{A}(\pi^{(n)}(\omega))|\mathcal{A}(\omega)) = 0\quad a.s.
		\end{equation}
         If in addition $\alpha,\beta,\alpha^{-1} \in \mathcal{K}^{\rm poly}_\infty$ then 
 \begin{equation}\label{Alimexpect}
     \lim \limits_{C_g \to 0} \E d_H(\mathcal{A}(\pi^{(n)}(\cdot))|\mathcal{A}(\cdot)) = 0.
 \end{equation}
	\end{theorem}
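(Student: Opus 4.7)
The plan is to mimic the proof of Theorem \ref{attractorgboundedsemicont} with two substitutions: the unperturbed semiflow $\Phi$ is replaced by the rough flow $\varphi(\cdot,\theta_{-T}\omega)$, and the smallness parameter $C_g\to 0$ is replaced by the convergence $\pi^{(n)}(\omega)\to\omega$ in $\tp$-variation on compact intervals. The continuity tool that plays the role of \eqref{phidiff1} is the pathwise Lipschitz estimate \eqref{soludiff}.

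First I would establish an $n$-uniform absorbing set. Combining \eqref{uniformbounded} with \eqref{dyadicshift} gives
\[
\ltn x(\pi^{(n)}(\theta_{-k}\omega))\rtn_{\tp,[-1,0]} = \ltn x(\pi^{(n)}(\omega))\rtn_{\tp,[-1-k,-k]}\leq 3^{1-1/p}\ltn x(\omega)\rtn_{\tp,[-1-k,-k]},
\]
so $\bar{R}_\lambda(\pi^{(n)}(\omega))$ is dominated by a multiple of a finite quantity of the form \eqref{Radius2} built from $\omega$, uniformly in $n$. Feeding this into the formula \eqref{cB2} and using the monotonicity of $\alpha^{-1}$ and $\beta\circ\alpha^{-1}$, as in \eqref{cBboundexp}, produces a tempered random set $\cB^*(\omega)$ with $\bar{\cB}^\varepsilon(\pi^{(n)}(\omega))\subset \cB^*(\omega)$ for all $n\in\N$, and in the polynomial case $|\cB^*(\cdot)|\in\cL^\rho$ for every $\rho\geq 1$.

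Next, fix $\varepsilon>0$ and an $\omega$ for which $\pi^{(n)}(\omega)\to\omega$ in $\tp$-variation on each bounded interval. Using that $\cA(\omega)$ attracts $\cB^*$ in the pullback sense, choose an \emph{integer} $T=T(\omega,\varepsilon)$ with
\[
d_H\bigl(\varphi(T,\theta_{-T}\omega)\cB^*(\theta_{-T}\omega)\,\bigm|\,\cA(\omega)\bigr)<\varepsilon.
\]
Because $T\in\N$, the equivariance \eqref{dyadicshift} gives $\theta_{-T}\pi^{(n)}(\omega)=\pi^{(n)}(\theta_{-T}\omega)$, so the invariance of $\cA(\pi^{(n)}(\omega))$ yields
\[
\cA(\pi^{(n)}(\omega))=\varphi(T,\pi^{(n)}(\theta_{-T}\omega))\,\cA(\pi^{(n)}(\theta_{-T}\omega)),\qquad \cA(\pi^{(n)}(\theta_{-T}\omega))\subset\cB^*(\theta_{-T}\omega).
\]
Applying \eqref{soludiff} with $\bx=\theta_{-T}\omega$ and $\bx^\pi=\pi^{(n)}(\theta_{-T}\omega)$, and using the uniform bound \eqref{uniformbounded} on the $\tp$-norms and the uniform bound on $|\cB^*(\theta_{-T}\omega)|$, we obtain a constant $C=C(f,g,\ltn\theta_{-T}\omega\rtn_{\tp,[0,T]},|\cB^*(\theta_{-T}\omega)|)$ \emph{independent of $n$} with
\[
\sup_{z_0\in\cB^*(\theta_{-T}\omega)}\bigl\|\varphi(T,\pi^{(n)}(\theta_{-T}\omega))z_0-\varphi(T,\theta_{-T}\omega)z_0\bigr\|\leq C\,\ltn \pi^{(n)}(\theta_{-T}\omega)-\theta_{-T}\omega\rtn_{\tp,[0,T]}.
\]
The triangle inequality then delivers
\[
d_H\bigl(\cA(\pi^{(n)}(\omega))\,\bigm|\,\cA(\omega)\bigr)\leq C\,\ltn \pi^{(n)}(\theta_{-T}\omega)-\theta_{-T}\omega\rtn_{\tp,[0,T]}+\varepsilon,
\]
and letting $n\to\infty$ followed by $\varepsilon\to 0$ proves \eqref{Alim}. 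For \eqref{Alimexpect} (reading the indicated limit as $n\to\infty$), the $n$-uniform $\cL^\rho$ bound on $|\cA(\pi^{(n)}(\cdot))|$ derived from Proposition \ref{cBlem} via $\cB^*$ provides an integrable dominant for $d_H(\cA(\pi^{(n)}(\cdot))|\cA(\cdot))\leq|\cA(\pi^{(n)}(\cdot))|+|\cA(\cdot)|$, so the Lebesgue dominated convergence theorem upgrades the a.s.\ limit to the $\cL^1$ limit.

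The main obstacle is the construction of the $n$-uniform absorbing set $\cB^*(\omega)$: the attractor $\cA(\pi^{(n)}(\omega))$ depends on $n$ through both the rough driver and its pullback history, so a global grip on the radii $\bar{R}_\lambda(\pi^{(n)}(\omega))$ is essential, and only \eqref{uniformbounded} together with the integer-shift equivariance \eqref{dyadicshift} makes this possible. A secondary technical point is that $T$ must be chosen in $\N$ so that \eqref{dyadicshift} can be invoked to push the cocycle through the dyadic approximation.
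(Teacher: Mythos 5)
Your proposal is correct and follows essentially the same route as the paper: an $n$-uniform absorbing set $\cB^*$ built from \eqref{uniformbounded}, \eqref{Radius2} and the integer-shift identity \eqref{dyadicshift}, a fixed integer pullback time chosen via the attraction of $\cB^*$ to $\cA(\omega)$, the pathwise continuity estimate \eqref{soludiff} to compare the two flows over that finite horizon, the triangle inequality, and dominated convergence for the expectation statement. The only cosmetic difference is that you invoke the invariance of $\cA(\pi^{(n)}(\omega))$ to pull it back into $\cB^*(\theta_{-T}\omega)$, whereas the paper uses the intersection representation \eqref{at5} together with the inclusion \eqref{triangle2}; both are equivalent here.
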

	
    \begin{proof}
		First it follows from \eqref{Radius2} and \eqref{uniformbounded} that 
		\begin{equation}\label{Apiest}
		\sup_{n\in \N}\bar{R}_\lambda(\pi^{(n)}(\omega)) \leq \Gamma_p \bar{R}_\lambda(\omega), \quad \forall \omega\in \Omega
		\end{equation}
		for some $\Gamma_p >1$, so that 
		\begin{equation}\label{unibound}
			\bar{\cB}(\pi^{(n)}(\omega)) = V^{-1}(\frac{C_\lambda}{\delta} + \bar{R}_\lambda(\pi^{(n)}(\omega)))  \subset V^{-1}(\frac{C_\lambda}{\delta} + \Gamma_p \bar{R}_\lambda(\omega)) = \cB^*(\omega),\quad \forall n \in \N.
		\end{equation}
		On the other hand, for any $y_0 \in \cB^*(\omega)$
		\[
		V(\varphi(1,\omega)y_0) - \frac{C_\lambda}{\delta} \leq e^{-\delta} \Gamma_p \bar{R}_\lambda(\omega) + H(\ltn \bx(\theta_1\omega)\rtn_{\tp,[-1,0]}) \leq \Gamma_p \bar{R}_\lambda (\theta_1 \omega),
		\]
		which implies that $\cB^*$ is forward invariant, i.e. $\varphi(1,\omega)\cB^*(\omega) \subset \cB^*(\theta_1 \omega)$. Similar to Theorem \ref{attractorgbounded2}, one can use \eqref{dyadicshift} to prove that for all $n \in \N, \omega \in \Omega$
		\begin{equation}\label{at5}
			\begin{split}
				\mathcal{A}(\pi^{(n)}(\omega))	&=  \lim \limits_{i \to \infty} \varphi(i,\pi^{(n)}(\theta_{-i}\omega) )\bar{\cB}(\pi^{(n)}(\theta_{-i}\omega)) =\bigcap_{i=1}^\infty \varphi(i,\pi^{(n)}(\theta_{-i}\omega) )\bar{\cB}(\pi^{(n)}(\theta_{-i}\omega)),\\	
				\mathcal{A}(\omega)	&=  \lim \limits_{i \to \infty} \varphi(i,\theta_{-i}\omega )\bar{\cB}^*(\theta_{-i}\omega) =\bigcap_{i=1}^\infty \varphi(i,\theta_{-i}\omega )\bar{\cB}^*(\theta_{-i}\omega).		
			\end{split}	
		\end{equation}
		As a result, for any $\delta >0$ there exists $T_\delta(\omega) \in \N$ such that 
		\begin{equation}\label{triangle1}
			d(\varphi(i,\theta_{-i}\omega)\bar{\cB}^*(\theta_{-i}\omega)\ | \mathcal{A}(\omega))  < \delta,\quad \forall i \geq T_\delta(\omega). 
		\end{equation}
		In addition, it follows from \eqref{unibound} that
		\begin{equation}\label{triangle2}
			\mathcal{A}(\pi^{(n)}(\omega))	\subset \bigcap_{i=1}^\infty \varphi(i,\pi^{(n)}(\theta_{-i}\omega) )\bar{\cB}^*(\theta_{-i}\omega),\quad \forall n\in \N.
		\end{equation}
		Now for any fixed $i \geq T_\delta(\omega)$, it follows from \eqref{soludiff} that 
		\begin{eqnarray}\label{triangle3}
			&& d(\varphi(i,\pi^{(n)}(\theta_{-i}\omega) )\bar{\cB}^*(\theta_{-i}\omega)\ |  \varphi(i,\theta_{-i}\omega)\bar{\cB}^*(\theta_{-i}\omega)) \notag\\
			&\leq& C(f,g,\ltn \bx(\theta_{-i}\omega) \rtn_{\tp,[0,i]},|\bar{\cB}^*(\theta_{-i}\omega)|) d_{\tp,[-i,0]} (\pi^{(n)}(\omega) ,\omega) < \delta,
		\end{eqnarray}
		for all $n \geq N(\delta,i,\omega)$, where $N(\delta,i,\omega)$ is large enough and $C(f,g,\ltn \bx(\theta_{-i}\omega) \rtn_{\tp,[0,i]},|\bar{\cB}^*(\theta_{-i}\omega)|)$ is a certain constant from \eqref{soludiff}. The estimates \eqref{triangle1}, \eqref{triangle2}, \eqref{triangle3} and the triangle inequality then prove $d_H(\mathcal{A}(\bx^{(n)}(\omega))\ | \mathcal{A}(\omega)) < 2 \delta$ for all $n \geq N(\delta,i,\omega)$, which proves \eqref{Alim}. 
        
        Once again, the conclusion \eqref{Alimexpect} is directly followed from \eqref{Apiest}, \eqref{Rboundexp} and the Lebesgue's dominated convergence theorem. 
    \end{proof}

   \subsection{Numerical attractors}
   In this section, we study the dynamics of the discrete system 
		\begin{equation}\label{REuler}
			\begin{split}
				y^\Delta_0 &\in \R^d,\\
				y^\Delta_{t_{k+1}} &= y^\Delta_{t_k} + f(y^\Delta_{t_k}) \Delta + g(y^\Delta_{t_k})x_{t_k, t_{k+1}} + Dg(y^\Delta_{t_k})g(y^\Delta_{t_k})\X_{t_k,t_{k+1}},\quad k \in \N.
			\end{split}
        \end{equation}
under the regular grid $\Pi = \{t_k:=k\Delta\}_{k \in \N}$, $0<\Delta\leq 1$. The global dynamics of the discrete system \eqref{REuler} has been studied recently in \cite{duckloeden}, \cite{congduchong23} for dissipative drift $f$ that is globally Lipschitz continuous, which show that the existing random pullback attractor of the discrete system \eqref{REuler} converges to the random attractor of the continuous system \eqref{fSDE0} as the step size $\Delta$ tends to zero. A difficulty in dealing with the discrete system is that we can not apply the Doss-Sussmann technique, simply because it is difficult to control the solution growth in a smooth way for the discrete time set. Fortunately, we still can apply the coupling technique in the proof of Theorem \ref{attractorgboundedsemicont} for the discrete system \eqref{REuler} and use only classical Lyapunov functions to conclude on the numerical attractors. This can be stated in the following theorem.
\begin{theorem}\label{attractordisc1}
    Under the assumptions (${\textbf H}_{V}$), (${\textbf H}_{g}$), assume $f$ is globally Lipschitz continuous with Lipschitz constant $C_f$ and there exists a classical Lyapunov function $V \in \cC^2$ satisfying \eqref{classicgradientnegativity} and further 
    \begin{equation}\label{Vdiscrete}
    \|D^2 V\|_{\infty,\R^d} <\infty; \quad  \alpha \|z\|^2 - C\leq V(z) \leq \beta (\|z\|^2+1), \quad \forall z \in \R^d
    \end{equation}
    for certain constants $\alpha,\beta >0, C\in \R$. Then there exists for sufficiently small step size $\Delta$ a random pullback attractor $\cA^\Delta$ for the discrete RDS $\varphi^\Delta$ generated by the discrete system \eqref{REuler}. For $C_g$ small enough, $\cA^\Delta$ approximates the continuous attractor $\cA$ of the continuous RDS $\varphi$, i.e.
     \begin{equation}\label{attractorsemicontdiscexpect}
     \lim \limits_{\Delta \to 0} d_H(\cA^\Delta(\omega)|\cA) = 0\quad \text{a.s. and }\quad 
     \lim \limits_{\Delta \to 0} \E d_H(\cA^\Delta(\cdot)|\cA) = 0. 
 \end{equation}
 Moreover, the discrete pullback attractor $\cA^\Delta$ is upper semi-continuous w.r.t. $C_g$, i.e.
 \begin{equation}\label{attractorsemicontdiscCgexpect}
 \lim \limits_{C_g \to 0} d_H(\cA^\Delta(\omega)|\cA^{\Delta,0}) = 0\quad \text{a.s. and} 
   \lim \limits_{C_g \to 0} \E d_H(\cA^\Delta(\cdot)|\cA^{\Delta,0}) = 0.
 \end{equation}
\end{theorem}
\begin{proof}
   Condition \eqref{Vdiscrete} implies that $V$ satisfies \eqref{Vbound} for bound functions in $\mathcal{K}_\infty^{\rm poly}$.   
   The proof follows the arguments in \cite[Theorem 5.3]{congduchong23} and \cite[Proposition 5.7]{duchongcong24}. Denote by $\Phi^\Delta$ the discrete semigroup generated by the unperturbed discrete system  
    \begin{equation}\label{unperturbdisc}
    \mu^\Delta_{t_{k+1}} = \mu^\Delta_{t_k} + f(\mu^\Delta_{t_k}) \Delta,\quad t_k \in \Pi.     
    \end{equation}
    Then it follows from \eqref{classicgradientnegativity}, \eqref{Vbounds} and the Taylor expansion that 
     \allowdisplaybreaks
    \begin{eqnarray}\label{Vdeterministic}
        V(\mu^\Delta_{k+1}) &\leq& V(\mu^\Delta_k) + \langle \nabla V(\mu^\Delta_k),f(\mu^\Delta_k)\rangle \Delta + \frac{1}{2}\|D^2 V\|_{\infty,\R^d} \|f(\mu^\Delta_k)\|^2 \Delta^2 \notag\\
        &\leq& V(\mu^\Delta_k) (1-d_2\Delta) + d_1\Delta +\|D^2 V\|_{\infty,\R^d} (C_f^2\|\mu^\Delta_k\|^2+ \|f(0)\|^2) \Delta^2 \notag\\
        &\leq& V(\mu^\Delta_k) \Big(1-d_2\Delta + \|D^2 V\|_{\infty,\R^d} \frac{C_f^2}{\alpha} \Delta^2\Big) + d_1\Delta + \|D^2 V\|_{\infty,\R^d}\|f(0)\|^2 \Delta^2 \notag\\
        &\leq& V(\mu^\Delta_k) (1-\frac{d_2}{2}\Delta) + C(f,V) \Delta
    \end{eqnarray}
    for a constant $C(f,V)$ and a sufficiently small step size $\Delta$ satisfying
    \[
   C(f,V):= d_1 + \|D^2 V\|_{\infty,\R^d}\|f(0)\|^2 \Delta;\quad  0<\Delta \leq \Delta_0:=\frac{\alpha d_2}{2C_f^2 \|D^2 V\|_{\infty,\R^d}}.
    \]
As a result, it follows from \eqref{Vdeterministic} and \eqref{Vdiscrete} that
\begin{eqnarray}\label{discphiA0}
\|\Phi^\Delta(t_k)\mu^\Delta_0\| =\|\mu^\Delta_k\| &\leq& \Big \{\frac{1}{\alpha} V(\mu_k^\Delta)+\frac{1}{\alpha}C \Big\}^{\frac{1}{2}}
 \notag\\
&\leq&\Big \{ \Big(1-\frac{d_2}{2}\Delta\Big)^k \frac{\beta}{\alpha} \|\mu^\Delta_0\|^2 + \frac{1}{\alpha}\Big(\beta+\frac{2}{d_2} C(f,V)+C\Big)\Big\}^{\frac{1}{2}} \notag\\
&\leq&e^{-\frac{d_2}{2}t_k} \sqrt{\frac{\beta}{\alpha}} \|\mu^\Delta_0\| + \underbrace{\sqrt{\frac{1}{\alpha}\Big(\beta+\frac{2}{d_2} C(f,V)+C\Big)}}_{=: \bar{C}},\quad \forall k \in \N
\end{eqnarray}
thus there exists a discrete attractor $\cA^{\Delta,0}$ of the unperturbed discrete system \eqref{unperturbdisc}. Since $f$ is globally Lipschitz continuous, we obtain similar estimates to \eqref{Phiymu} that for any $T \in \Pi$
\begin{equation}\label{Phideltaest}
\begin{split}
 \ltn \Phi^\Delta(\cdot)\mu_0^\Delta\rtn_{1{\rm- var},\Pi[0,T]} &\leq T \|f(\mu^\Delta)\|_{\infty,\Pi[0,T]} \leq T\Big[\|f(0)\|+C_f\bar{C}  (1+ \|\mu_0^\Delta\|)\Big];\\
 \| \Phi^\Delta(\cdot)y_0^\Delta-\Phi^\Delta(\cdot)\mu_0^\Delta\|_{\infty,\Pi[0,T]} &\leq (1+C_f \Delta)^{\frac{T}{\Delta}} \|y_0^\Delta-\mu_0^\Delta\| \leq e^{C_fT}\|y_0^\Delta-\mu_0^\Delta\|;\\
 \ltn \Phi^\Delta(\cdot)y_0^\Delta-\Phi^\Delta(\cdot)\mu_0^\Delta\rtn_{1-{\rm var},\Pi[s,t]}
 &\leq (t-s)e^{C_fT}\|y_0^\Delta-\mu_0^\Delta\|,\quad \forall 0 \leq s \leq t \leq T.
 \end{split}    
\end{equation}
Similar to \eqref{phidiff1}, \eqref{phidiff2} in the continuous case, we show in the Appendix that there exists a (generic) integrable random variable $ \xi(C_f, T,\|f\|_{\infty,\cA^{\Delta,0}},\bx(\omega),[0,T])$ such that for any arbitrary $\varepsilon < 1$ fixed and any $\mu_0^\Delta \in \cA^{\Delta,0}$
\begin{eqnarray}\label{hestdiscrete}
   && \|\varphi^\Delta (\cdot,\omega)y_0^\Delta - \Phi^\Delta(\cdot)y_0^\Delta\|_{\infty,\Pi[0,T]} \notag\\
   &\leq& C_g \xi(C_f, T,\|f\|_{\infty,\cA^{\Delta,0}},\bx(\omega),[0,T])(1+ \|y^\Delta_0-\mu^\Delta_0\|^{\frac{2}{p}}) \notag\\
    &\leq& \varepsilon \|y_0^\Delta-\mu^\Delta_0\| +C_g\xi(C_f, T,\|f\|_{\infty,\cA^{\Delta,0}},\bx(\omega),[0,T]),\quad \forall y_0^\Delta \in \R^d, \forall T \in \R_+,\forall \omega \in \Omega. 
\end{eqnarray}
Therefore, fix a sufficiently large $T\in \Pi$ such that there exists a constant $\delta >0$ with 
\[
e^{-\delta}=e^{-\frac{d_2}{2}\lfloor T \rfloor}\sqrt{\frac{\beta}{\alpha}} +\varepsilon <1, 
\]
then it follows from \eqref{discphiA0} that the discrete attractor $\cA^{\Delta,0}$ is a subset of an absorbing set, i.e. 
\[
\cA^{\Delta,0} \subset \cB^0 = B\Big(0,1+\frac{\bar{C}}{1-e^{-\delta}}\Big) := \Big\{\mu_0 \in \R^d: \|\mu_0\|\leq 1+\frac{\bar{C}}{1-e^{-\delta}}\Big\},\quad \forall \Delta \leq \Delta_0.
\] 
This, together with \eqref{hestdiscrete}, yields
\begin{eqnarray}\label{phidelta1}
    \|\varphi^\Delta (T,\omega)y_0^\Delta\| 
    &\leq& \|\Phi^\Delta(T)y_0^\Delta\| + \varepsilon \|y_0^\Delta\| +C_g\xi(C_f, T,\|f\|_{\infty,\cA^{\Delta,0}},\bx(\omega),[0,T])\notag\\
    &\leq&\Big(e^{-\frac{d_2}{2}T}\sqrt{\frac{\beta}{\alpha}} +\varepsilon\Big) \|y_0^\Delta\| + \bar{C}+C_g\xi(C_f, \lceil T \rceil,\|f\|_{\infty,\cB^0},\bx(\omega),[0,\lceil T\rceil])\notag\\
    &\leq& e^{-\delta} \|y_0^\Delta\| +\bar{C}+ C_g\xi(C_f, \lceil T\rceil,\|f\|_{\infty,\cB^0},\bx(\omega),[0,\lceil T\rceil]).
\end{eqnarray}
Observe that for $\Delta \leq \Delta_0$ small enough, $\delta, \cB^0, \lceil T \rceil$ are independent of $\Delta$.
Estimate \eqref{phidelta1} then enables us to follow the arguments in \cite[Section 5]{congduchong23} (and similar to Theorem \ref{attractorgbounded2}) to prove that the discrete RDS $\varphi^\Delta$ admits a discrete random pullback attractor $\cA^\Delta$, which is uniformly bounded in $\Delta$ for sufficiently small $C_g$. In particular, there exists a discrete pullback absorbing set $\bar{\cB}^{\varepsilon,\Delta} \supset \cA^\Delta$ which is forward invariant and uniformly bounded by $\cB^0$, i.e. the similar limit to \eqref{limitingset} holds
\begin{equation}\label{uniformboundeddiscrete}
 \lim \limits_{C_g \to 0} d_H\Big(\bar{\cB}^{\varepsilon,\Delta} (\omega)\Big| \cB^0\Big)=   0 \quad \text{a.s.}
\end{equation}
The almost sure convergence in \eqref{attractorsemicontdiscexpect} is thus a direct consequence of \cite[Theorem 5.3]{congduchong23}. The expectation convergence in \eqref{attractorsemicontdiscexpect} comes from Lebesgue's dominated convergence theorem.

Next, estimate \eqref{hestdiscrete} yields a similar estimate to \eqref{phidiff3} that
     \begin{eqnarray*}
     d_H(\varphi^\Delta(T,\omega)y^\Delta_0|\cA^{\Delta,0}) &\leq& d_H(\Phi^\Delta(T)y^\Delta_0|\cA^{\Delta,0})+ \varepsilon d_H(y_0|\cA^{\Delta,0})\\
     &&+C_g \xi(C_f, \lceil T\rceil,\|f\|_{\infty,\cB^0},\bx(\omega),[0,\lceil T\rceil],\varepsilon)
    \end{eqnarray*}
for all $y^\Delta_0 \in\R^d, T \in \R_+,\omega \in \Omega$. From this step, the proof of the convergence in \eqref{attractorsemicontdiscCgexpect} follows the arguments of Theorem \ref{attractorgboundedsemicont} line by line, taking into account \eqref{uniformboundeddiscrete}.
\end{proof}

\subsection{Local stability}\label{localstabsec}
In this subsection, we assume that $f$ is locally Lipschitz continuous on a compact domain $\cD$ with constant $C_f(\cD)$ and a local Lyapunov function $V$ satisfies \eqref{classicgradientnegativity} on $\cD$ such that $V^{-1}(\frac{d_1}{d_2}) \subset \cD$.
In the local setting, any solution starting from a point $y_0$ near the boundary of $\cD$ might leave $\cD$ at a certain time on a fixed time interval $[0, T ]$, as it is a random variable. 

        As a special case, we are able to investigate the local stability of system \eqref{fSDE0} in a compact vicinity $\mathcal{D}$ of the trivial solution, where we assume for simplicity that $f(0) =0$, while $g\in C^3_b$ with $g(0)=0$. In this setting, we can apply the arguments in \cite{duchongcong24} to prove that the trivial solution of \eqref{fSDE0} are still locally exponentially stable in a random sub-domain of $\mathcal{D}$. 
		Our target is to find a Lyapunov function $V$ of the unperturbed autonomous system such that there exists constants $\alpha, \beta >0$ such that \eqref{Vbounds} is satisfied with $C:=V(0)$ and \eqref{classicgradientnegativity} has the form
		\begin{eqnarray}
			\langle \nabla V(z), f(z)\rangle &\leq& - d_2 [V(z)-V(0)],\quad \forall z \in \cD. \label{gradVmodified}
		\end{eqnarray}
       Motivated by \cite{duchongcong24}, we conclude the following result on local stability of the trivial solution.
    \begin{theorem}\label{localstabLya}
        If there exists a Lyapunov function $V$ satisfying \eqref{Vbounds}, \eqref{lipschitzV} and \eqref{gradVmodified} then there exists a $\lambda_0>0$ small enough such that for any $C_g < \lambda_0$, the trivial solution of system \eqref{fSDE0} is locally exponentially stable on $\mathcal{D}$.
    \end{theorem}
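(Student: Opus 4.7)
The plan is to combine the Doss--Sussmann decomposition of Theorem \ref{existenceuniqueness} with the linearisation supplied by $f(0)=g(0)=0$, so as to reduce the problem to a \emph{linear} differential inequality for a shifted Lyapunov function. I first extend $f,V$ to globally defined $\tilde f,\tilde V$ via the Fefferman-type extensions used earlier in this subsection; these preserve $\tilde f(0)=0$ and $\tilde V(0)=V(0)$ and allow the global pathwise construction of Theorem \ref{existenceuniqueness} to apply. I then work with the shifted function $W(z):=V(z)-V(0)$, which satisfies $W(0)=0$, $\|\nabla W\|\le L_V$, $W(z)\ge\alpha\|z\|$ by \eqref{Vbounds} (with $C=V(0)$), and $\langle\nabla W(z),f(z)\rangle\le -d_2 W(z)$ on $\cD$ by \eqref{gradVmodified}. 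The target is to derive $W(y_t)\le e^{-\delta t}W(y_0)$ for $y_0$ in a sufficiently small (random) sub-level set of $W$ contained in $\cD$, whence $\|y_t\|\le\alpha^{-1}W(y_0)e^{-\delta t}\lesssim\|y_0\|e^{-\delta t}$.

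The new ingredient beyond the global theory is that $g(0)=0$ together with $g\in C^3_b$ gives the \emph{multiplicative} bound $\|g(y)\|\le C_g\|y\|$. Feeding this in place of the $L^\infty$ bound on $g$ into the fixed-point argument for the pure rough equation \eqref{pure} along each greedy interval $[\tau_k,\tau_{k+1}]$ built with threshold $\lambda/(16C_p C_g)$ yields the refined bounds
\[
\|\eta_t\|\le \lambda\|z_t\|,\qquad \|\psi_t\|\le \lambda,\qquad t\in[\tau_k,\tau_{k+1}],
\]
in place of \eqref{HK}: the extra factor $\|z_t\|$ on $\eta_t$ is the key improvement coming from $g(0)=0$, and it survives under concatenation since $\eta_{\tau_k}=0$ at each reset.

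Using $f(0)=0$ and local Lipschitz continuity of $f$ with constant $C_f$, one has $\|f(z_t+\eta_t)\|\le C_f(1+\lambda)\|z_t\|$ and $\|f(z_t+\eta_t)-f(z_t)\|\le C_f\lambda\|z_t\|$. Plugging these into exactly the expansion already carried out in the proof of Theorem \ref{Lipschitzf} along the auxiliary ODE \eqref{ascoODE}, I obtain
\[
\dot W(z_t)\le -d_2 W(z_t)+L_V C_f\lambda(2+\lambda)\|z_t\|\le -\Bigl(d_2-\frac{L_V C_f\lambda(2+\lambda)}{\alpha}\Bigr)W(z_t),
\]
so choosing $\lambda_0$ (equivalently $C_g$) small enough makes $\delta:=d_2-L_V C_f\lambda(2+\lambda)/\alpha>0$. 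Concatenating over greedy times with $y_{\tau_k}=z_{\tau_k}$ gives $W(y_t)\le e^{-\delta t}W(y_0)$ on the forward-invariant sub-level set $\{W\le r\}\subset\cD$ for $r$ small enough, which is the claimed local exponential stability. The main obstacle is the sharper rough-path estimate of the second step: although it is a direct adaptation of the sewing/fixed-point analysis in \cite{duc21}, it requires tracking the linear $\|z\|$ dependence of $\|g\|$ throughout the iteration instead of invoking the $L^\infty$ bound; once this refinement is in hand, the remaining pieces are routine specialisations of the computations in Theorem \ref{Lipschitzf} and Theorem \ref{existenceuniqueness}.
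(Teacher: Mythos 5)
Your first two steps reproduce the paper's mechanism: the refined estimate $\|\eta_t\|\le\lambda\|z_t\|$ coming from $g(0)=0$ (the paper gets this from \eqref{solest1}, citing \cite[Proposition 2.1]{duchongcong24}), and the re-run of the computation of Theorem \ref{Lipschitzf} with $f(0)=0$, which indeed gives $\dot W(z_t)\le -\bigl(d_2-\tfrac{1}{\alpha}L_VC_f(2+\lambda)\lambda\bigr)W(z_t)$ on each greedy interval. Up to that point you are on the paper's route.

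The gap is in the concatenation step. The differential inequality controls $W(z_t)$, not $W(y_t)$; at the end of each greedy interval you must pass back to $y_{\tau_{k+1}}=z_{\tau_{k+1}}+\eta_{\tau_{k+1}}$ before resetting, and since $\|\eta_{\tau_{k+1}}\|\le\lambda\|z_{\tau_{k+1}}\|\le\tfrac{\lambda}{\alpha}W(z_{\tau_{k+1}})$ this costs a multiplicative factor $\bigl(1+\tfrac{L_V\lambda}{\alpha}\bigr)$ \emph{per interval} (this is exactly the modified \eqref{deltalambda2} in the paper). The fact that $\eta_{\tau_k}=0$ at each reset does not remove this endpoint loss. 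Over a unit time interval the number of greedy intervals $N(\tfrac{\lambda}{16C_pC_g},\bx,[0,1])$ is random and unbounded, so the accumulated factor is $\exp\{\tfrac{L_V\lambda}{\alpha}N\}$, and your claimed deterministic bound $W(y_t)\le e^{-\delta t}W(y_0)$ is false: a rough realization of the noise can produce many greedy times in $[0,1]$ and wipe out the decay $e^{-\delta}$ on that interval. Consequently the sub-level set $\{W\le r\}$ is not deterministically forward invariant either. The paper closes this gap by keeping the induction estimate $[V(y_1)-V(0)]\le\exp\{-\delta+\tfrac{L_V\lambda}{\alpha}N(\tfrac{\lambda}{16C_pC_g},\bx,[0,1])\}[V(y_0)-V(0)]$, choosing $0<C_g<\lambda<\lambda_0$ so that $\mu=d_2-C_fL_V(2+\lambda_0)\lambda_0-\tfrac{L_V\lambda_0}{\alpha}\,\E N(\tfrac{1}{16C_p},\bx,[0,1])>0$ (finiteness of $\E N$ follows from \eqref{conditionx} via \eqref{Nest}), and then invoking the ergodic-theorem arguments of \cite[Theorem 4.5, Theorem 4.10]{duchongcong24} to obtain almost sure decay at rate $\mu$ together with \emph{random} radii $r(\omega),r^*(\omega)$ guaranteeing the solution stays in $\mathcal{D}$. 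Without this averaging step your argument only yields exponential decay on the event that the noise is uniformly tame, not almost surely.
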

  
    \begin{proof}
   Since $g(0)=0$, it follows from \cite[Proposition 2.1]{duchongcong24} that the first estimate for the solution $\phi$ of the pure rough equation \eqref{pure} is modified to
		\begin{equation}\label{solest1} 
			\ltn \phi \rtn_{\tp,[a,b]} \leq 8 C_p C_g \ltn \bx \rtn_{\tp,[a,b]}\min \left\{\|\phi_a\|, 1\right\}.
		\end{equation}		
		Given the Doss-Sussmann transformation $y_t = \phi_t(\bx,z_t)$, the difference $\eta_t := y_t - z_t$ on each time interval $16 C_p C_g \ltn \bx \rtn_{\tp,[0,\tau]} \leq \lambda <1$ then satisfies $\|\eta_t\| \leq \lambda \|z_t\|$ for all $t \in [0,\tau]$.    
        Also, since $f$ is locally Lipschitz continuous on $\mathcal{D}$ with $f(0) =0$, it yields
        \[
        \|f(z+\eta)\| \leq C_f(\cD) (\|z\|+ \|\eta\|) \quad \text{and}\quad \|f(z+\eta)-f(z)\| \leq C_f(\cD) \|\eta\|.
         \]
        In this scenarios, the estimate \eqref{Lipschitzfestimate} is re-written as follows
         \allowdisplaybreaks
              \begin{eqnarray*}
			&&\langle \nabla V(z), (I + \psi)f(z+\eta) \rangle \notag\\
            &=& \langle \nabla V(z), f(z) \rangle + \langle \nabla V(z), \psi f(z+\eta) \rangle + \langle \nabla V(z), f(z+\eta) - f(z) \rangle \notag\\
			&\leq& - d_2 [V(z)-V(0)] + \|\nabla V(z)\| \Big(\|\psi\| \|f(z+\eta)\| + \|f(z+\eta)-f(z)\|\Big)  \notag\\
			&\leq& - d_2 [V(z)-V(0)] + L_V \Big[\|\psi\|C_f(\cD) (\|z\|+\|\eta\|) + C_f(\cD) \|\eta\| \Big] \notag\\
			&\leq& - d_2 [V(z)-V(0)] + L_VC_f(\cD)\lambda (2+\lambda) \|z_t\|\notag\\
			&\leq& \Big[\frac{1}{\alpha}C_f(\cD)L_V(2+\lambda)\lambda - d_2\Big] [V(z)-V(0)] \\
            &\leq& -\delta [V(z)-V(0)] 
		\end{eqnarray*}
        where we choose $\lambda < (\frac{\alpha d_2}{3C_f(\cD)L_V} \wedge 1)$ such that $\delta:= d_2 - \frac{1}{\alpha}C_f(\cD)L_V(2+\lambda)\lambda >0$. Hence \eqref{deltalambda2} has the form
        \begin{eqnarray*}
          V(y_\tau)-V(0) &\leq& V(z_\tau)-V(0) + L_V \|\eta_\tau\| \\
          &\leq& V(z_\tau)-V(0) + L_V \lambda \|z_\tau\| \leq (1+\frac{L_V\lambda}{\alpha})[V(z_\tau)-V(0)] \\
          &\leq& (1+\frac{L_V\lambda}{\alpha}) e^{-\delta \tau} [V(y_0)-V(0)] \leq \exp \{-\delta \tau + \frac{L_V\lambda}{\alpha}\}[V(y_0)-V(0)] .            \end{eqnarray*}
        Hence similar to the proof of Theorem \ref{existenceuniqueness}, we can prove by induction that
        \begin{equation}
            [V(y_1)-V(0)] \leq \exp \Big\{-\delta +  \frac{L_V\lambda}{\alpha} N(\frac{\lambda}{16C_pC_g},\bx,[0,1]) \Big\} [V(y_0)-V(0)].
        \end{equation}
        Next, we follow the arguments in \cite[Theorem 4.5, Theorem 4.10]{duchongcong24} by choosing $0<C_g<\lambda <\lambda_0< (\frac{d_2}{3C_f(\cD)L_V} \wedge 1)$ for sufficiently small $\lambda_0$ that
        \begin{equation}
            \mu = d_2 - C_f(\cD)L_V(2+\lambda_0)\lambda_0 - \frac{L_V\lambda_0}{\alpha} \E N\Big(\frac{1}{16C_p},\bx,[0,1]\Big) >0.
        \end{equation}
        With such choice of $C_g$ and $\lambda$, there exists for any $\epsilon >0$ random radii $r(\omega), r^*(\omega) < \epsilon$ such that for any $\|y_0\|\leq r(\omega) <\epsilon$ in $\mathcal{D}$ then $\|y_t\|<\epsilon$ for all $t\geq 0$ and further
        \[
        \|y_t\| \leq r^*(\omega) e^{-\mu t},\quad \forall t\geq 0.
        \]
        This proves the locally exponential stability of the trivial solution.\\
        \end{proof}

	\begin{example}
	    Continuing Example \ref{penex}, consider new Lyapunov function 
        \[
        V(z) := \Big[\frac{1}{2}(2\mu v + w)^2+\frac{1}{2} w^2 + 2\sigma^2 (1-\cos v)\Big]^{\frac{1}{2}} = \Big[2\mu^2 v^2 + 2 \mu vw + w^2 + 2\sigma^2 (1-\cos v)\Big]^{\frac{1}{2}}. 
        \]
        Then $V(0)=0$ and $V$ satisfies \eqref{Vbounds} with
		\begin{eqnarray}\label{Rpendulum}
        && (\frac{\mu}{\sqrt{2}} \wedge \frac{1}{\sqrt{3}})\|z\| \leq V(z) \leq \sqrt{3\mu^2 +\sigma^2+2} \|z\|; \notag\\
        && \langle \nabla V(z),f(z) \rangle = -\frac{\mu}{V(z)} (\sigma^2 v \sin v + w^2) \leq -\frac{\mu (\frac{\sigma^2}{2} \wedge 1) \|z\|^2}{V(z)} \leq -\frac{\mu (\frac{\sigma^2}{2} \wedge 1)}{3\mu^2 +\sigma^2+2}V(z)
		\end{eqnarray}
        for all $z\ne 0$ on the domain $\mathcal{D}:=B(0,1)$, where we use the inequalities
        \begin{eqnarray*}
       v\sin v \geq \frac{1}{2}v^2\quad \text{and}\quad 
1-\cos v = 2 \sin^2 (\frac{v}{2}) \leq \frac{1}{2}v^2,\quad  \forall |v| \leq 1. 
        \end{eqnarray*}
    In particular, for a special diffusion $g$ that $g(0)=0$, the trivial solution becomes an equilibrium. In that case, we can apply Theorem \ref{localstabLya} to prove that the trivial solution of \eqref{fSDE0} are still locally exponentially stable in a random sub-domain of $\mathcal{D}$. 
	\end{example}

     \begin{example}
         We revisit the example in \cite[Section 7]{gruene} by considering the system
         \begin{equation}
         \begin{split}
             \dot{y}_1&= -y_1 - 10y^2_2 \\
             \dot{y}_2 &= -2y_2. 
         \end{split}
         \end{equation}
        To investigate the stability of the trivial solution, the author in \cite{gruene} construct a Lyapunov function $V(y) = y_1^2 + y_2^2 + 13 y_2^4$. We will introduce a new strong Lyapunov function which satisfies condition \eqref{gradVmodified}. Indeed, construct the function $V(y) = (y_1^2+13 y_2^4)^{\frac{1}{4}}$. Then $V(y)\geq V(0)=0$, and by Cauchy inequality
        \begin{eqnarray*}
        \|y\| \leq \|y\|^{\frac{1}{2}}&\leq& V(y) \leq 13(1+\|y\|),\quad \forall \|y\| \leq 1;\\
        \|\nabla V(y)\| &= &\left\|\frac{(\frac{1}{2} y_1, 13 y_2^3)}{V(y)^3} \right\|\leq 1 + 13=14.    
        \end{eqnarray*}
        A direct computation shows that
        \begin{eqnarray*}
        \langle \nabla V(y), f(y)\rangle &=& \frac{-2 y_1^2 -20 y_1 y_2^2 -104 y_2^4}{4V^3(y)} =  -\frac{ y_1^2 + 4 y_2^4 + (y_1+10 y_2^2)^2}{4V^3(y)}\\
            &\leq& - \frac{1}{13}\frac{V(y)^4}{V(y)^3} 
            = -\frac{1}{13} V(y),\quad \forall y \ne 0. 
        \end{eqnarray*}
        Hence, $V$ satisfies \eqref{Vbounds}, \eqref{lipschitzV} and \eqref{gradVmodified} on the domain $\cD = B(0,1)$, and Theorem \ref{localstabLya} can be applied to prove the locally exponential stability of the trivial solution in $\cD$. 
     \end{example}   
     
		\section{Strong Lyapunov function approximation by neural networks}\label{Lyanetsec}
		As it is not an easy task to find an explicit strong Lyapunov function, we discuss in this section the problem of approximating our strong Lyapunov function on a compact domain $\mathcal{D}$ by a neural network. For simplicity, we will assume that a strong Lyapunov function $V$ exists and satisfies additional condition \eqref{Vbounds}. 
        
        Following \cite{gruene} and \cite{gabyetal} (see also \cite{richardsetal}, \cite{rodriguezetal}, \cite{zhouetal} and the references therein) we approximate $V$ by a Lyapunov neural network $V_\vartheta: \R^d \to \R$ from a single hidden layer neural network $N_\vartheta: \R^d \to \R$ with the set of its $\kappa$-trainable parameters $\vartheta \in \R^\kappa$,
		given by
		\begin{equation}\label{LyaNet}
			V_\vartheta (z) := |N_\vartheta(z)| + \bar{\alpha} \|z\|
		\end{equation}
		where $\bar{\alpha} >0$ is a small user-chosen parameters. Then $V_\vartheta$ satisfies conditions \eqref{Vbounds} and \eqref{lipschitzV}. As shown in \cite{gabyetal}, \cite{gruene}, the set of Lyapunov-Nets of the form \eqref{LyaNet} with a RePU network\footnote{this means \emph{rectified power units}, that is, $\sigma(x)=0$ if $x\le 0$ and $\sigma(x)=x^s$ for $x>0$, with $s$ a non-negative integer, for instance $s=2$; $s=1$ is the standard ReLU version} $N_\vartheta$ and bounded weights $\vartheta \in [-1,1]^\kappa$ is dense in the function space $\mathcal{S}:= \{\bar{h}(z) + \bar{\alpha} \|z\|: \bar{h} \in C^1(\cD,\R_+)\}$ under $W^{1,\infty}(\cD)$, where
            \[
			\|h\|_{W^{1,\infty}(\cD)}:= \max \{{\rm ess sup}_{z \in \cD} \|h(z)\|, \max \limits_{1 \leq i \leq d} {\rm ess sup}_{z \in \cD} \|D_ih(z)\|\}.
			\] 
        
        Similar to \cite{gabyetal}, we prove the following auxiliary result that there exists a strong Lyapunov function $V_{\vartheta^*}$ from the Lyapunov neural network.
		\begin{lemma}\label{lem3}
			For any $\varepsilon_0 \in (0,\alpha)$ and $\delta \in (0,1)$ in \eqref{gradientnegativity}, there exists $\vartheta^* \in \Theta$ such that $V_{\vartheta^*}$ of the form \eqref{LyaNet} satisfies \eqref{Vbounds} and 
			\begin{equation}\label{V*net}
					\sup \limits_{\substack{\psi \in \R^{d\times d},\|\psi\|\leq \lambda \\\eta \in \R^d,\|\eta\| \leq \lambda}}\langle\nabla V_{\vartheta^*}(z),(I + \psi) f(z+\eta)\rangle \leq  C_\lambda + \varepsilon_0 \Big[\delta+(1+\lambda)\| f\|_{\infty,\cD}\Big]-\delta V_{\vartheta^*}(z),\quad \forall z \in \cD.
			\end{equation}
        \end{lemma}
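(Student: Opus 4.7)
The plan is to leverage the density of Lyapunov-Nets of the form \eqref{LyaNet} in the function class $\mathcal{S}$ under the $W^{1,\infty}(\cD)$-norm, and then transfer the differential inequality \eqref{gradientnegativity} from $V$ to the approximant $V_{\vartheta^*}$, incurring only errors that are controlled by the approximation accuracy. Concretely, I would first fix $\bar\alpha \in (0,\alpha)$ with $\alpha$ coming from \eqref{Vbounds}, and write $V(z) = \bar h(z) + \bar\alpha\|z\|$ with $\bar h(z):= V(z) - \bar\alpha\|z\|$. Since $V \in C^1(\R^d,\R_+)$ satisfies $\alpha\|z\|+C \leq V(z)$, the function $\bar h$ is (after an inconsequential additive shift which does not affect $\nabla V$) in $C^1(\cD,\R_+)$, so that $V|_\cD \in \mathcal{S}$.

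Next I would invoke the density result of \cite{gabyetal}, \cite{gruene} to pick, for a prescribed tolerance $\varepsilon_0 > 0$, a parameter $\vartheta^* \in \Theta = [-1,1]^\kappa$ such that
\[
\|V - V_{\vartheta^*}\|_{W^{1,\infty}(\cD)} \leq \varepsilon_0,
\]
which in particular yields $|V(z) - V_{\vartheta^*}(z)| \leq \varepsilon_0$ and $\|\nabla V(z) - \nabla V_{\vartheta^*}(z)\| \leq \varepsilon_0$ for a.e. $z \in \cD$. Then I would split
\[
\langle \nabla V_{\vartheta^*}(z), (I+\psi)f(z+\eta)\rangle = \langle \nabla V(z), (I+\psi)f(z+\eta)\rangle + \langle \nabla V_{\vartheta^*}(z) - \nabla V(z), (I+\psi)f(z+\eta)\rangle,
\]
apply \eqref{gradientnegativity} to the first term, and bound the second term by the Cauchy--Schwarz inequality together with $\|(I+\psi)f(z+\eta)\| \leq (1+\lambda)\|f\|_{\infty,\cD}$, obtaining the intermediate bound
\[
\langle \nabla V_{\vartheta^*}(z), (I+\psi)f(z+\eta)\rangle \leq C_\lambda - \delta V(z) + \varepsilon_0 (1+\lambda)\|f\|_{\infty,\cD}.
\]
Finally, using $V(z) \geq V_{\vartheta^*}(z) - \varepsilon_0$ to substitute $-\delta V(z) \leq -\delta V_{\vartheta^*}(z) + \delta \varepsilon_0$ gives exactly \eqref{V*net}.

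For the two-sided bound \eqref{Vbounds} on $V_{\vartheta^*}$, the lower bound $\bar\alpha\|z\| \leq V_{\vartheta^*}(z)$ is immediate from the architecture $V_{\vartheta^*}(z) = |N_{\vartheta^*}(z)| + \bar\alpha\|z\|$, while the upper bound on $\cD$ is inherited from the upper bound $V(z) \leq \beta(1+\|z\|)$ up to the additive $\varepsilon_0$.

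The main technical obstacle is the very first step: verifying that $V$ (or a minor modification of it whose gradient agrees with $\nabla V$) genuinely sits in the class $\mathcal{S}$ to which the density theorem of \cite{gabyetal}, \cite{gruene} applies. The smallness of $\bar\alpha$ relative to $\alpha$ from \eqref{Vbounds} is what makes $\bar h = V - \bar\alpha\|\cdot\|$ nonnegative on $\cD$ after an additive constant shift, and the non-smoothness of $\|\cdot\|$ at the origin is already present on both sides of the density statement, so it does not create an extra obstruction. Once $V|_\cD \in \mathcal{S}$ is secured, the rest of the argument is a routine triangle-inequality perturbation of \eqref{gradientnegativity}.
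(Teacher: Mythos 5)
Your proposal is correct and follows essentially the same route as the paper's proof: pick $\vartheta^*$ with $\|V - V_{\vartheta^*}\|_{W^{1,\infty}(\cD)} < \varepsilon_0$ from the density of Lyapunov-Nets in $\mathcal{S}$, split $\langle\nabla V_{\vartheta^*}(z),(I+\psi)f(z+\eta)\rangle$ into the $\nabla V$ term (bounded via \eqref{gradientnegativity}) plus an error term bounded by $\varepsilon_0(1+\lambda)\|f\|_{\infty,\cD}$, and then trade $-\delta V(z)$ for $-\delta V_{\vartheta^*}(z)+\delta\varepsilon_0$. The only differences are minor: you obtain the lower bound in \eqref{Vbounds} directly from the architecture $V_{\vartheta^*}(z)=|N_{\vartheta^*}(z)|+\bar{\alpha}\|z\|$ rather than from the $W^{1,\infty}$ estimate as the paper does, and the membership $V|_{\cD}\in\mathcal{S}$ that you discuss via an additive shift is a point the paper simply asserts without comment.
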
	
		  \begin{proof}
			Observe that the set $\mathcal{V}^{\varepsilon_0}_\Theta$ of Lyapunov-Nets forms an $\varepsilon_0$-net of $\mathcal{S}$ in the $W^{1,\infty}(\cD)$ sense. Since $V \in \mathcal{S}$, there exists $\vartheta^* \in \Theta$ such that $V_{\vartheta^*} \in \mathcal{V}^{\varepsilon_0}_\Theta$ and $\|V - V_{\vartheta^*}\|_{W^{1,\infty}(\cD)} < \varepsilon_0$. It is easy to check that
            \begin{equation}\label{Vlip}
				\begin{split}
			V_{\vartheta^*}(z) &\leq V(z)+\|\nabla V_{\vartheta^*} -\nabla V^* \|_{\infty,\cD} \|z\| + |V(0)-V_{\vartheta^*}(0)| \leq (\beta+\varepsilon_0) \|z\|+ |V(0)-V_{\vartheta^*}(0)|; \\
			V_{\vartheta^*}(z) &\geq V^*(z) -\|\nabla V_{\vartheta^*} -\nabla V^* \|_{\infty,\cD} \|z\| - |V(0)-V_{\vartheta^*}(0)| \geq (\alpha - \varepsilon_0)\|z\|  - |V(0)-V_{\vartheta^*}(0)|;				
				\end{split}
			\end{equation}
            thus $V_{\vartheta^*}$ also satisfies \eqref{Vbounds} if we choose $\varepsilon_0 \in (0,\alpha)$. On the other hand, for any $\|\psi\|,\|\eta\|\leq \lambda$ 
             \allowdisplaybreaks
			\begin{eqnarray*}
			&&	\langle\nabla V_{\vartheta^*}(z), (I + \psi) f(z+\eta)\rangle \\
			&=& \langle\nabla V(z), (I + \psi) f(z+\eta)\rangle + \langle\nabla V_{\vartheta^*}(z) -\nabla V(z) ,(I + \psi) f(z+\eta)\rangle \\
			&\leq& C_\lambda-\delta V(z) + \|\nabla V_{\vartheta^*}(z) -\nabla V(z) \| \|I + \psi\|\| f(z+\eta)\| \\
			&\leq& C_\lambda-\delta V_{\vartheta^*}(z) + \delta \|V^*(z)- N_{\vartheta^*}(z)\| + \varepsilon_0 (1+\lambda)\| f\|_{\infty,\cD} \\
			&\leq& C_\lambda + \varepsilon_0 \Big[\delta+(1+\lambda)\| f\|_{\infty,\cD}\Big]-\delta V_{\vartheta^*}(z).
			\end{eqnarray*}
		which proves \eqref{V*net}.
		\end{proof}	
        
 Observe that $\|\nabla V_\vartheta\|, \|\nabla^2 V_\vartheta\| \leq L(\Theta,\cD)$ for a certain constant $L(\Theta,\cD)>0$ dependent on the set of bounded weights $\Theta$ and the compact domain $\cD$. We prove that.
    \begin{proposition}\label{Mvartheta}
        The function $M_\vartheta(z,\psi,\eta):= \langle\nabla V_\vartheta(z),(I + \psi) f(z+\eta)\rangle$ is continuous w.r.t. $(z,\psi,\eta)$ in the domain $ \cD \times B_{d\times d}(0,\lambda) \times B_d(0,\lambda)$. In particular,
         \begin{equation}\label{Mest}
         |M(\tz,\tpsi,\teta)-M(z,\psi,\eta)|  \leq L(\Theta,\cD)(1+\lambda) \Big(\|f\|_{\infty,B(\cD,\lambda)} +\|Df\|_{\infty,B(\cD,\lambda)} \Big) \Big(\|\tpsi-\psi\|+\|\teta-\eta\|+\|\tz-z\|\Big).
         \end{equation}
    \end{proposition}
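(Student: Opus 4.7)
The plan is to establish the estimate \eqref{Mest} by a single splitting of $M(\tz,\tpsi,\teta)-M(z,\psi,\eta)$ into three pieces, each handled by one of the three sources of variation (the gradient, the matrix, and the argument of $f$). Continuity on the compact product domain $\cD \times B_{d\times d}(0,\lambda)\times B_d(0,\lambda)$ is an immediate consequence, since \eqref{Mest} is a Lipschitz-type bound in each of the three arguments.

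The key identity I would write is the telescoping decomposition
\begin{align*}
M(\tz,\tpsi,\teta)-M(z,\psi,\eta) &= \langle \nabla V_\vartheta(\tz)-\nabla V_\vartheta(z),\,(I+\tpsi)f(\tz+\teta)\rangle \\
&\quad + \langle \nabla V_\vartheta(z),\,(\tpsi-\psi)f(\tz+\teta)\rangle \\
&\quad + \langle \nabla V_\vartheta(z),\,(I+\psi)\bigl(f(\tz+\teta)-f(z+\eta)\bigr)\rangle.
\end{align*}
For the first term I use that, by the construction of $V_\vartheta$ in \eqref{LyaNet} and the bounded weights assumption on $\Theta$, $\|\nabla^2 V_\vartheta\|_{\infty,\cD} \leq L(\Theta,\cD)$, whence $\|\nabla V_\vartheta(\tz)-\nabla V_\vartheta(z)\| \leq L(\Theta,\cD)\|\tz-z\|$, combined with $\|I+\tpsi\| \leq 1+\lambda$ and $\|f(\tz+\teta)\| \leq \|f\|_{\infty,B(\cD,\lambda)}$. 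For the second term I use $\|\nabla V_\vartheta(z)\|\leq L(\Theta,\cD)$ and the same bound on $f$. For the third term I again bound the gradient and $\|I+\psi\|\leq 1+\lambda$, and apply the mean value theorem via $\|f(\tz+\teta)-f(z+\eta)\| \leq \|Df\|_{\infty,B(\cD,\lambda)}(\|\tz-z\|+\|\teta-\eta\|)$.

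Adding the three estimates and absorbing the constants into $L(\Theta,\cD)(1+\lambda)(\|f\|_{\infty,B(\cD,\lambda)}+\|Df\|_{\infty,B(\cD,\lambda)})$ yields exactly \eqref{Mest}. Continuity of $M$ on the prescribed domain is then immediate, since the right-hand side tends to zero as $(\tz,\tpsi,\teta) \to (z,\psi,\eta)$.

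There is no real obstacle here: the statement is essentially a bookkeeping exercise. The only small point worth highlighting is that one must enlarge the domain of $f$ slightly from $\cD$ to $B(\cD,\lambda)$ (as the statement already does), in order to accommodate the perturbation $z+\eta$ with $\|\eta\|\leq \lambda$, and similarly to estimate $Df$ on the segment between $z+\eta$ and $\tz+\teta$. The bounds $\|\nabla V_\vartheta\|,\|\nabla^2 V_\vartheta\|\leq L(\Theta,\cD)$ are used implicitly from the preceding discussion following Lemma \ref{lem3}.
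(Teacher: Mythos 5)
Your proposal is correct and follows essentially the same route as the paper: a telescoping decomposition of $M(\tz,\tpsi,\teta)-M(z,\psi,\eta)$ handled with the bounds $\|\nabla V_\vartheta\|,\|\nabla^2 V_\vartheta\|\leq L(\Theta,\cD)$, $\|I+\psi\|\leq 1+\lambda$, and the sup norms of $f$ and $Df$ on $B(\cD,\lambda)$. The only difference is cosmetic -- you split into three terms while the paper splits into four by varying $\psi$, $\eta$, the gradient, and the $z$-argument of $f$ separately -- and both yield the stated constant.
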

            \begin{proof}
       The proof follows directly from the estimate
         \allowdisplaybreaks
        \begin{eqnarray*}
            &&|M(\tz,\tpsi,\teta)-M(z,\psi,\eta)| \notag\\
            &\leq& |M(\tz,\tpsi,\teta)-M(\tz,\psi,\eta)| + |M(\tz,\psi,\eta)-M(z,\psi,\eta)| \notag\\
            &\leq& \|\nabla V_\vartheta(\tz)\|\Big( \|(I+\tpsi)f(\tz+\teta)-(I +\psi)f(\tz+\teta)\| +\|(I+\psi)\|\|[f(\tz+\teta)-f(\tz+\eta)]\|\Big)\notag\\
            &&+ |\langle\nabla V_\vartheta(\tz)-\nabla V_\vartheta(z),(I + \psi) f(\tz+\eta)\rangle| + |\langle\nabla V_\vartheta(z),(I + \psi) [f(\tz+\eta)-f(z+\eta)]\rangle| \notag\\
            &\leq& L(\Theta,\cD) \Big(\|\tpsi-\psi\| \|f\|_{\infty,B(\cD,\lambda)} + (1+\lambda) \|Df\|_{\infty,B(\cD,\lambda)}\|\teta-\eta\|\Big) \notag\\
            &&+  L(\Theta,\cD) \|\tz-z\| (1+\lambda) \|f\|_{\infty,B(\cD,\lambda)} +L(\Theta,\cD)(1+\lambda) \|Df\|_{\infty,B(\cD,\lambda)} \|\tz-z\| \notag\\
            &\leq& L(\Theta,\cD)(1+\lambda) \Big(\|f\|_{\infty,B(\cD,\lambda)} +\|Df\|_{\infty,B(\cD,\lambda)} \Big) \Big(\|\tpsi-\psi\|+\|\teta-\eta\|+\|\tz-z\|\Big).
        \end{eqnarray*}
    \end{proof}
  
As a result, for any fixed parameter $\epsilon \in (0,1)$, one can apply \eqref{V*net} and \eqref{Mest} to choose $\epsilon_0 \in (0,1)$ small enough such that
    \begin{equation}\label{eps0}
   \varepsilon_0  \max\Big\{ \Big[\delta+(1+\lambda)\| f\|_{\infty,\cD}\Big], 3L(\Theta,\cD)(1+\lambda) \Big(1+\|f\|_{\infty,B(\cD,\lambda)} +\|Df\|_{\infty,B(\cD,\lambda)} \Big) \Big\} \leq \epsilon.
    \end{equation}
        The training process of Lyapunov neural network $V_\vartheta$ in \eqref{LyaNet} aims to find a specific network parameter $\vartheta$ such that the condition \eqref{gradientnegativity} holds for $V_\vartheta$ at every $z \in \cD$. This can be achieved by minimize the risk function
		\begin{equation}\label{riskfunc}
			l(\vartheta) := \frac{1}{|\cD|} \int_\cD \Big[\sup \limits_{\|\psi\|\leq \lambda, \|\eta\| \leq \lambda} \langle \nabla V_\vartheta(z),(I + \psi) f(z+\eta(z))\rangle + \bar{\delta}V_\vartheta(z)-\bar{C_\lambda}\Big]^2_+ dz  
		\end{equation}
		for a user chosen parameters $\bar{\delta} \in (0,1)$ and $\bar{C_\lambda} >0$. Note that due to \eqref{V*net} and \eqref{eps0}, $\bar{C_\lambda}$ should theoretically be chosen to be $\epsilon$-close to $C_\lambda$. 
        
        In practice, the integral \eqref{riskfunc} does not have analytic form, thus we approximate $l(\vartheta)$ using the empirical expectation  
		\begin{equation}\label{empriskfunc}
			\hat{l}(\vartheta) := \frac{1}{m_z(\cD,\epsilon_0,\rho)} \sum_{i =1}^{m_z(\cD,\epsilon_0,\rho)} \Big[\sup \limits_{\psi_j, \eta_k} \langle \nabla V_\vartheta(z_i), (I + \psi_j) f(z_i+\eta_k)\rangle + \bar{\delta} V_\vartheta(z_i)-\bar{C_\lambda}\Big]^2_+   
		\end{equation}
		where $\{z_i: i = 1,\ldots,m_z(\cD,\epsilon_0,\rho)\}$ (respectively $\{\psi_j: j = 1,\ldots,m_\psi(B_{d\times d}(0,\lambda),\epsilon_0,\rho)\}$ and $\{\eta_k: k = 1,\ldots,m_\eta(B_d(0,\lambda),\epsilon_0,\rho)\}$) are independent and identically distributed (i.i.d) samples from the uniform distribution $U(\cD)$ on $\cD$ (respectively $U(B_{d\times d}(0,\lambda))$ on $B_{d\times d}(0,\lambda)$ and $U(B_d(0,\lambda))$ on $B_d(0,\lambda)$). As a result, the minimizer of $\hat{l}$ exists and achieves minimal value. Given a parameter $\rho \ll 1$ small enough and consider the samplings in Appendix \ref{app2}, we will show below that.
        \begin{theorem}\label{accLya}
        With a probabilty $1-\rho$, any minimizer $\hat{\vartheta}$ of $\hat{l}(\cdot)$ generates $V_{\hat{\vartheta}}$ that is an $\epsilon$-accurate strong Lyapunov function of $f$ on $\cD$.
        \end{theorem}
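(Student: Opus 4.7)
The plan is to combine three ingredients: the near-optimal parameter from Lemma \ref{lem3}, a sampling lemma (Appendix \ref{app2}) that upgrades the i.i.d.\ draw to an $\varepsilon_0$-net of $\cD \times B_{d\times d}(0,\lambda) \times B_d(0,\lambda)$ with probability at least $1-\rho$, and the uniform Lipschitz estimate from Proposition \ref{Mvartheta}. The sample sizes $m_z(\cD,\varepsilon_0,\rho)$, $m_\psi(B_{d\times d}(0,\lambda),\varepsilon_0,\rho)$ and $m_\eta(B_d(0,\lambda),\varepsilon_0,\rho)$ are precisely chosen so that a union bound over the three independent sampling events yields the $\varepsilon_0$-net property on the product domain with probability at least $1-\rho$.

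First I would fix $\bar{\delta}:=\delta$ and set $\bar{C_\lambda}:=C_\lambda + \varepsilon_0\bigl[\delta + (1+\lambda)\|f\|_{\infty,\cD}\bigr]$, and invoke Lemma \ref{lem3} to produce $\vartheta^* \in \Theta$ for which
\[ \sup_{\|\psi\|,\|\eta\|\leq \lambda} \langle \nabla V_{\vartheta^*}(z), (I+\psi)f(z+\eta)\rangle + \bar{\delta}V_{\vartheta^*}(z) \leq \bar{C_\lambda}, \qquad \forall z \in \cD. \]
In particular every summand in the empirical risk \eqref{empriskfunc} vanishes at $\vartheta^*$, so $\hat{l}(\vartheta^*)=0$. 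Since $\hat{l}\geq 0$, any minimizer $\hat\vartheta$ must therefore also satisfy $\hat{l}(\hat\vartheta)=0$, which means that the pointwise bound
\[ \langle \nabla V_{\hat\vartheta}(z_i), (I+\psi_j)f(z_i+\eta_k)\rangle + \bar{\delta}V_{\hat\vartheta}(z_i) \leq \bar{C_\lambda} \]
holds at every sample triple $(z_i,\psi_j,\eta_k)$.

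Next, working on the event of probability at least $1-\rho$ on which the samples form an $\varepsilon_0$-net, I would pick for any $(z,\psi,\eta) \in \cD \times B_{d\times d}(0,\lambda) \times B_d(0,\lambda)$ sample indices $(i,j,k)$ with $\|z-z_i\|, \|\psi-\psi_j\|, \|\eta-\eta_k\|\leq \varepsilon_0$. Combining the pointwise inequality above with the Lipschitz bound \eqref{Mest} from Proposition \ref{Mvartheta} (applied to $M_{\hat\vartheta}$) together with the Lipschitz continuity $|V_{\hat\vartheta}(z)-V_{\hat\vartheta}(z_i)|\leq L(\Theta,\cD)\varepsilon_0$, and exploiting the calibration \eqref{eps0} of $\varepsilon_0$ which absorbs all generic constants into the tolerance $\epsilon$, I would conclude
\[ \langle \nabla V_{\hat\vartheta}(z), (I+\psi)f(z+\eta)\rangle + \bar{\delta}V_{\hat\vartheta}(z) \leq \bar{C_\lambda} + \epsilon. \]
Taking the supremum over $(\psi,\eta)$ of norm at most $\lambda$ then delivers the $\epsilon$-accurate strong Lyapunov condition on the whole of $\cD$.

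The main obstacle is conceptual rather than computational: the empirical supremum in \eqref{empriskfunc} is taken only over the discrete samples $\{\psi_j\}$ and $\{\eta_k\}$, whereas the strong Lyapunov condition \eqref{gradientnegativity} requires the supremum over the full balls. The uniform Lipschitz estimate of Proposition \ref{Mvartheta}, together with the scaling of $\varepsilon_0$ prescribed in \eqref{eps0}, is exactly what allows us to pass from the discrete to the continuous supremum at the cost of the additive tolerance $\epsilon$, and it must be applied both to $M_{\hat\vartheta}$ and to $\bar{\delta}V_{\hat\vartheta}$. A secondary but essential technical point is that the Lipschitz constant $L(\Theta,\cD)$ built into \eqref{Mest} is fixed once $\Theta$ is bounded and $\cD$ compact, so that the calibration \eqref{eps0} can be performed \emph{a priori} before drawing the samples.
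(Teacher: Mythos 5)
Your proposal is correct and follows essentially the same route as the paper: Lemma \ref{lem3} forces $\hat{l}(\vartheta^*)=0$ so any minimizer has zero empirical risk at the sample triples, and on the $1-\rho$ net event the Lipschitz estimate of Proposition \ref{Mvartheta} together with $|V_{\hat\vartheta}(z)-V_{\hat\vartheta}(z_i)|\leq L(\Theta,\cD)\varepsilon_0$ and the calibration \eqref{eps0} transfers the bound from the samples to all $(z,\psi,\eta)$. The only cosmetic difference is quantitative bookkeeping: the paper's chain \eqref{Mest2} lands at $\bar{C_\lambda}+2\epsilon$ (one $\epsilon$ for the $M$-perturbation, one for the $\bar{\delta}V_{\hat\vartheta}$ term), whereas you quote $\bar{C_\lambda}+\epsilon$, which is immaterial for the (informal) notion of $\epsilon$-accuracy used here.
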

\begin{proof}
	Due to Lemma \ref{lem3}, $\hat{l}(\vartheta^*) =0$ for an appropriate choice of $\bar{\delta}, \bar{C_\lambda}$ and due to the non-negativeness of $\hat{l}$, the minimum of $\hat{l}$ on $\Theta$ is $0$. Let $\hat{\vartheta}$ be any minimizer of $\hat{l}$ then $\hat{l}(\hat{\vartheta}) =0$, which implies that
	\begin{equation}\label{Vhat}
	\sup \limits_{\psi_j, \eta_k}	M_{\hat{\vartheta}}(z_i,\psi_j,\eta_k) \leq \bar{C_\lambda}- \bar{\delta} V_{\hat{\vartheta}}(z_i),\quad \forall i = 1,\ldots, m_z(\cD,\epsilon_0,\rho).
	\end{equation}
	Next given a probability $1-\rho$, for any $z \in \cD, \psi \in B_{d\times d}(0,\lambda), \eta \in B_d(0,\lambda)$, there exists $z_i, \psi_j, \eta_k$ such that $z \in B(z_i,\epsilon_0), \psi \in B(\psi_j,\lambda), \eta \in B(\eta_k,\lambda)$. Since $\bar{\delta} <1$, \eqref{V*net} and \eqref{Mest} yield
	\begin{eqnarray}\label{Mest2}
		M_{\hat{\vartheta}}(z,\psi,\eta) &\leq& M_{\hat{\vartheta}}(z_i,\psi_j,\eta_k) + \epsilon \leq \bar{C_\lambda} + \epsilon - \bar{\delta} V_{\hat{\vartheta}}(z_i)\notag \\
        &\leq& \bar{C_\lambda} + \epsilon + \bar{\delta} |V_{\hat{\vartheta}}(z)-V_{\hat{\vartheta}}(z_i)| - \bar{\delta} V_{\hat{\vartheta}}(z)\notag\\
        &\leq& \bar{C_\lambda} + \epsilon + L(\Theta,\cD) \epsilon_0 - \bar{\delta} V_{\hat{\vartheta}}(z) \notag\\ 
        &\leq& \bar{C_\lambda} + 2\epsilon- \bar{\delta} V_{\hat{\vartheta}}(z) . 
	\end{eqnarray}
Taking the supremum of the left hand side of \eqref{Mest2} for $\psi\in B(\psi_j,\lambda), \eta \in B(\eta_k,\lambda)$, we conclude that $V_{\hat{\vartheta}}$ is a $\epsilon$-accurate Lyapunov function.
	\end{proof}

    \section*{Acknowledgments}
    We thank Phan Thanh Hong, Giacomo Landi, Christian Kühn for their careful proofreading and also for useful suggestions which led to improvements in the presentation of the paper.
		
		\section{Appendix}
		\subsection{Rough paths and the random setting}\label{roughpath}
		Let us briefly present the concept of rough paths in the simplest form, following   Friz  \& Hairer  \cite{frizhairer} and  Lyons \cite{lyons98}. For any finite dimensional vector space $W$, denote by $C([a,b],W)$ the space of all continuous paths $y: [a,b] \to W$ equipped with the sup norm $\|\cdot\|_{\infty,[a,b]}$ given by $\|y\|_{\infty,[a,b]}=\sup_{t\in [a,b]} \|y_t\|$, 
		where $\|\cdot\|$ is the norm in $W$. We write $y_{s,t}:= y_t-y_s$. For $p\geq 1$, denote by $C^{p{\rm-var}}([a,b],W)\subset C([a,b],W)$ the space of all continuous paths $y:[a,b] \to W$ of finite $p$-variation 
		\[
		\ltn y\rtn_{\tp,[a,b]} :=\left(\sup_{\Pi([a,b])}\sum_{i=1}^n \|y_{t_i,t_{i+1}}\|^p\right)^{1/p} < \infty, 
		\]
		where the supremum is taken over the whole class of finite partitions of $[a,b]$. 
		Also for each $0<\alpha<1$, we denote by $C^{\alpha}([a,b],W)$ the space of H\"older continuous functions with exponent $\alpha$ on $[a,b]$ equipped with the norm
		\begin{equation}\label{holnorm}
			\|y\|_{\alpha,[a,b]}: = \|y_a\| + \ltn y\rtn_{\alpha,[a,b]},\quad \text{where} \quad \ltn y\rtn_{\alpha,[a,b]} :=\sup_{\substack{s,t\in [a,b],\ s<t}}\frac{\|y_{s,t}\|}{(t-s)^\alpha} < \infty.
		\end{equation}
		Let  $\alpha \in (\frac{1}{3},\frac{1}{2})$ (to avoid integrals of  order higher than 2 in the theory to be described) and $x \in C^\alpha([a,b],\R^m)$. To arrive at a solution of a stochastic differential equation along $x$, it is necessary to supplement $x$ by a higher order integral. Thus, a couple $\bx=(x,\X) \in \R^m \oplus (\R^m \otimes \R^m)$, where
		\[
		\X \in C^{2\alpha}([a,b]^2,\R^m \otimes  \R^m):= \left\{\X \in C([a,b]^2,\R^m \otimes  \R^m):  \sup_{\substack{s, t \in [a,b],\ s<t}} \frac{\|\X_{s,t}\|}{|t-s|^{2\alpha}} < \infty \right\}, 
		\]
		is called a {\it rough path} if it satisfies Chen's relation
		\begin{equation}\label{chen}
			\X_{s,t} - \X_{s,u} - \X_{u,t} = x_{s,u} \otimes  x_{u,t},\qquad \forall a \leq s \leq u \leq t \leq b. 
		\end{equation}
        The two-parameter function $\X$ then postulates the values for the iterated integral
        \begin{equation}
            \X_{s,t}=\int_s^t x_{s,r}dx_r.
        \end{equation}
        Such integrals are needed for representing pathwise solutions of stochastic differential equations.
		We introduce the rough path semi-norm 
		\begin{equation}\label{translated}
			\ltn \bx \rtn_{\alpha,[a,b]} := \ltn x \rtn_{\alpha,[a,b]} + \ltn \X \rtn_{2\alpha,[a,b]^2}^{\frac{1}{2}},\quad \text{where}\quad \ltn \X \rtn_{2\alpha,[a,b]^2}:= \sup_{s, t \in [a,b];s<t} \frac{\|\X_{s,t}\|}{|t-s|^{2\alpha}} < \infty.  
		\end{equation}
		Throughout this paper, we will fix parameters $\frac{1}{3}< \alpha < \nu <\frac{1}{2}$ and  $p = \frac{1}{\alpha}$ so that $C^\alpha([a,b],W) \subset C^{\tp}([a,b],W)$. We also set $q=\frac{p}{2}$ (so that $\frac{1}{p} +\frac{1}{q}>1$, an important technical condition for ensuring the existence of rough integrals) and consider the $\tp$ semi-norm 
		\begin{equation}\label{pvarnorm}
			\begin{split}
				\ltn \bx \rtn_{\tp,[a,b]} := \Big(\ltn x \rtn^p_{\tp,[a,b]} + \ltn \X \rtn_{\tq,[a,b]^2}^q\Big)^{\frac{1}{p}}, \ltn \X \rtn_{\tq,[a,b]^2} := \left(\sup_{\Pi([a,b])}\sum_{i=1}^n \|\X_{t_i,t_{i+1}}\|^q\right)^{1/q}, 
			\end{split}
		\end{equation}
		where the supremum is taken over the whole class of finite partitions $\Pi([a,b])$ of $[a,b]$.  

        In the rough path setting, denote by $T^2_1(\R^m) = 1 \oplus \R^m \oplus (\R^m \otimes \R^m)$ the set with the group product
	\[
	(1,g^1,g^2) \bullet (1,h^1,h^2) = (1, g^1 + h^1, g^1 \otimes h^1 + g^2 +h^2),
	\]
	for all ${\bf g} =(1,g^1,g^2), {\bf h} = (1,h^1,h^2) \in T^2_1(\R^m)$.	Given $\alpha \in (\frac{1}{3},\nu)$, denote by $\cC^{0,\alpha}(I,T^2_1(\R^m))$ 
	the closure of $\cC^{\infty}(I,T^2_1(\R^m))$ in the H\"older space $\cC^{\alpha}(I,T^2_1(\R^m))$, and by 
	$\cC_0^{0,\alpha}(\R,T^2_1(\R^m))$ the space of all paths $ {\bf g}: \R\to T^2_1(\R^m))$ such that $ {\bf g}|_I \in \cC^{0,\alpha}(I, T^2_1(\R^m))$ for each compact interval $I\subset\R$ containing $0$. Assign $\Omega:=\cC_0^{0,\alpha}(\R,T^2_1(\R^m))$ and equip it with the Borel $\sigma$-algebra $\mathcal{F}$. Denote by $\theta$ the {\it Wiener-type shift}
	\begin{equation}\label{shift}
		(\theta_t \omega)_\cdot = \omega_t^{-1}\bullet \omega_{t+\cdot},\forall t\in \R, \omega \in \cC^{0,\alpha}_0(\R,T^2_1(\R^m)),
	\end{equation}  
	and define the so-called {\it diagonal process}  $\bX: \R \times \Omega \to T^2_1(\R^m), \bX_t(\omega) = \omega_t$ for all $t\in \R, \omega \in \Omega$. Under assumption 	(${\textbf H}_X$), it can be proved that there exists a probability measure $\bP$ which is $\theta$ - invariant \cite[Theorem 5]{BRSch17}. Thus $(\Omega,\mathcal{F},\mP)$ is a probability space equipped with the continuous (thus measurable) {\it metric dynamical system} $\theta: \R \times \Omega \to \Omega$. 
	In particular, the Wiener shift \eqref{shift} implies that
	\begin{equation}\label{roughshift}
		\ltn \bx(\theta_h \omega) \rtn_{\tp,[s,t]} = \ltn \bx(\omega) \rtn_{\tp,[s+h,t+h]}.
	\end{equation}
	It is proved in \cite[Lemma 6.1]{duchongcong24} that $\theta$ is ergodic if $X=B^H$ is a fractional Brownian motion. In this paper, we assume that the metric dynamical system $\theta$ is ergodic. 

    Note that when dealing with additive noise, we do not need rough path lifts but instead consider $\Omega:=\cC_0^{0,\alpha}(\R,\R^m)$ together with a Wiener shift $(\theta_t \omega)_\cdot = \omega_{t+\cdot} -\omega_t$.

	   \subsection{Generating i.i.d. samples from a compact domain}\label{app2}
    Consider a compact domain $\cD$ and fix a parameter $\epsilon_0 \in (0,1)$. Due to compactness, $\cD$ is covered by $M(\epsilon_0,\cD)$ balls $B(y_i,\epsilon_0)$ where $y_i \in \cD$. Consider a sample of $m$ i.i.d. random variables $z_j$ from the uniform distribution $U(\cD)$ on $\cD$ and assign $p_i := \mP (z \in B(y_i,\epsilon_0)), i = 1,\ldots, M(\epsilon_0,\cD)$ and $p_{\rm min} := \min \{p_i: i = 1,\ldots, M(\epsilon_0,\cD)\}$. Then for any $\rho \ll 1$ small enough, we obtain (due to the i.i.d. samplings) the estimate
    \allowdisplaybreaks
    \begin{eqnarray*}
    &&\mP(\forall i = 1,\ldots,M(\epsilon_0,\cD), \exists j: z_j \in B(y_i,\epsilon_0)) \\
    &=& 1-  \mP(\exists i = 1,\ldots,M(\epsilon_0,\cD): z_j \notin B(y_i,\epsilon_0)\  \forall  j = 1,\ldots, m) \\
   &\geq& 1 - \sum_{i = 1}^{M(\epsilon_0,\cD)} \mP(z_j \notin B(y_i,\epsilon_0)\  \forall  j = 1,\ldots, m)\\
   &\geq& 1 - \sum_{i = 1}^{M(\epsilon_0,\cD)} (1-p_i)^m
   \geq 1 - M(\epsilon_0,\cD) (1-p_{\rm min})^m \geq 1-\rho   
    \end{eqnarray*}
    if we choose $m = m(\cD,\epsilon_0,\rho)$ large enough. Therefore for any $\rho \ll 1$, there exists a sampling size $m = m(\rho,\epsilon_0)$ large enough such that for any sampling of i.i.d. $\{z_j: i = 1,\ldots,m\}$ from the uniform distribution $U(\cD)$ we obtains $\cD \subset \bigcup_{j =1}^{m} B(z_j,\epsilon_0)$ with a probability larger than $1-\rho$. 
    
    \subsection{Proofs}\label{allproofs}
\begin{proof}[{\bf Lemma \ref{Kfunctions}}]
    The second statement comes obviously from Young's inequality, thus we only prove the first statement. Since $\beta$ satisfies \eqref{tempered}, for any $\epsilon_0 >0$, there exists an $\epsilon(\beta,\epsilon_0) >0$ such that: for any $\epsilon_1 < \epsilon(\beta,\epsilon_0)$ there exists a $t_0(\beta,\epsilon_1,\epsilon_0)>0$ such that
    \begin{equation}\label{tempered1}
    \beta (e^{\epsilon_1 t}) < e^{\epsilon_0 t},\quad \forall t\geq  t_0(\beta,\epsilon_1,\epsilon_0).
    \end{equation}
Now since $\alpha$ also satisfies \eqref{tempered}, there exists with the above $\epsilon_1$ an $\epsilon(\alpha,\epsilon_1) >0$ such that for any $\epsilon_2 < \epsilon(\alpha,\epsilon_1)$ there exists a $t_1(\alpha,\epsilon_2,\epsilon_1)>0$ such that
    \begin{equation}\label{tempered2}
    \alpha (e^{\epsilon_2 t}) < e^{\epsilon_1 t},\quad \forall t\geq  t_1(\alpha,\epsilon_2,\epsilon_1).
    \end{equation} 
    Combining \eqref{tempered1} and \eqref{tempered2} and the fact that $\beta,\alpha$ are strictly increasing functions in $\R_+$, we obtain 
    \begin{equation}\label{tempered3}
        \beta (\alpha (e^{\epsilon_2 t})) < \beta (e^{\epsilon_1 t}) < e^{\epsilon_0 t},\quad \forall t \geq  \max \{t_0(\beta,\epsilon_1,\epsilon_0), t_1(\alpha,\epsilon_2,\epsilon_1)\}.
    \end{equation}
  Since \eqref{tempered3} holds for any $\epsilon_0 >0$, it shows that $\beta\circ \alpha$ also satisfies \eqref{tempered}.  \\
\end{proof}

\begin{proof}[{\bf Lemma \ref{thetaL1}}]
i, Assume $|\xi(\omega)|$ is tempered w.r.t. $\omega$, i.e. $\limsup \limits_{t \to \infty} \frac{1}{t} \log |\xi(\theta_{-t}\omega)| =0$, it follows that for any $\epsilon \in (0,\delta)$, there exists $T(\omega,\epsilon)$ such that $|\xi(\theta_{-t}\omega)| \leq e^{\epsilon t}$ for all $t\geq T(\omega,\epsilon)$. Thus 
\begin{eqnarray*}
\bar{\xi}(\omega):=\sup \limits_{t\geq 0} e^{-\delta t} |\xi(\theta_{-t}\omega)| &\leq& \max \{\sup \limits_{t < T(\omega,\epsilon)} e^{-\delta t} |\xi(\theta_{-t}\omega)|, \sup \limits_{t \geq T(\omega,\epsilon)}e^{(\epsilon -\delta)t}\} \\
&\leq&  \max \{\sup \limits_{t < T(\omega,\epsilon)} e^{-\delta t} |\xi(\theta_{-t}\omega)|, e^{(\epsilon -\delta)T(\omega,\epsilon)}\}; 
\end{eqnarray*}
which proves that $\bar{\xi}(\omega)$ is well defined and finite. Moreover, for any $s\geq T(\omega,\epsilon)$
\begin{eqnarray*}
\bar{\xi}(\theta_{-s}\omega)=e^{\delta s}\sup \limits_{t\geq 0} e^{-\delta (t+s)} |\xi(\theta_{-t-s}\omega)|\leq e^{\delta s}\sup \limits_{t\geq s} e^{-\delta t} |\xi(\theta_{-t}\omega)| 
\leq e^{\delta s}\sup \limits_{t\geq s} e^{(\epsilon-\delta)t}  = e^{\delta s}e^{(\epsilon-\delta)s} = e^{\epsilon s}.
\end{eqnarray*}
This proves that $\bar{\xi}(\omega)$ is also tempered w.r.t. $\omega$.

   ii, The proof is straight forward from the measure preseration of $\theta$ and from Fubini's theorem. Specifically, since $|\xi|$ is integrable, it is tempered a.s. (see e.g. \cite[Lemma 3.4.3]{arnold}), thus $\sup \limits_{t\in \R_+} e^{-\delta t} |\xi(\theta_{-t}\omega)|$ is well defined and finite a.s. 
    For any $m \in \N_+$ 
       \allowdisplaybreaks
    \begin{eqnarray*}
       && \sum_{n=m}^\infty (n+1) \mP(\omega: \sup_{t\in \R_+} e^{-\delta t} |\xi(\theta_{-t}\cdot)| \in [n,n+1]) \\
       &\leq& \sum_{n=m}^\infty (n+1) \int_0^\infty \mP(\omega: e^{-\delta t} |\xi(\theta_{-t}\cdot)| \in [n,n+1])dt \\
       &\leq& \sum_{n=m}^\infty (n+1) \int_0^\infty \mP(\omega: e^{-\delta t} |\xi(\cdot)| \in [n,n+1])dt \\
       &\leq& \sum_{n=m}^\infty (n+1) \int_0^\infty \mP(\omega: |\xi(\cdot)| \in [e^{\delta t} n,e^{\delta t} (n+1)])dt \\
       &\leq& \int_0^\infty e^{-\delta t}\Big[\sum_{n=m}^\infty e^{\delta t}(n+1) \mP(\omega: |\xi(\cdot)| \in [e^{\delta t} n,e^{\delta t} (n+1)])\Big]dt \\
        &\leq& \int_0^\infty e^{-\delta t}\Big[\sum_{n=m}^\infty \big(\frac{n+1}{n}\big) e^{\delta t}n\mP(\omega: |\xi(\cdot)| \in [e^{\delta t} n,e^{\delta t} (n+1)])\Big]dt \\
          &\leq& \int_0^\infty e^{-\delta t}\big(\frac{m+1}{m}\big)\Big[\sum_{n=m}^\infty e^{\delta t}n\mP(\omega: |\xi(\cdot)| \in [e^{\delta t} n,e^{\delta t} (n+1)])\Big]dt \\
       &\leq& \E |\xi(\cdot)|\big(\frac{m+1}{m}\big) \int_0^\infty   e^{-\delta t}dt =\frac{1}{\delta}\big(\frac{m+1}{m}\big)\E |\xi(\cdot)|.
    \end{eqnarray*}
    This proves that $\sum_{n=0}^\infty (n+1) \mP(\omega: \sup \limits_{t\in \R_+} e^{-\delta t} |\xi(\theta_{-t}\cdot)| \in [n,n+1]) <\infty$ and thus leading to $\sup \limits_{t\in \R_+} e^{-\delta t} |\xi(\theta_{-t}\cdot)| \in \cL^1$.\\
\end{proof}
      
\begin{proof}[{\bf Lemma \ref{Rlem}}]
    Observe from \eqref{conditionx} in (${\textbf H}_X$) that $H(\ltn \bx(\omega) \rtn_{\tp,[-1,0]}) \in \cL^\rho$ for any $\rho \geq 1$, thus $H$ is also tempered. Hence by applying \cite[Lemma 5.2]{congduchong22} and \cite[Lemma 4.1]{congduchong23} one can easily check that $\bar{R}_\lambda (\cdot)$ is finite and in $\cL^\rho$, thus is also tempered a.s. To prove \eqref{Radiusest}, observe from \eqref{H} and Young's inequality that
    \begin{equation}\label{H2}
		H(\xi) < C_H (\xi^p+ 1).
	\end{equation}	    
 Assign $n(T):= \min \{n\in \N_+: n \geq T\}$. Applying the Wiener shift property \eqref{roughshift}, then \eqref{H2}, and \cite[Lemma 2.1]{congduchong17}, we obtain
\begin{eqnarray*}
 \max \limits_{t\in [0,T]} \bar{R}_\lambda(\theta_{-t}\omega) &\leq& \sum_{k=0}^\infty e^{-k\delta} \max \limits_{t\in [0,T]} H(\ltn \bx(\theta_{-t}\omega) \rtn_{\tp,[-1-k,-k]})\\
 &\leq& \sum_{k=0}^\infty e^{-k\delta} \max \limits_{t\in [0,T]} H(\ltn \bx(\omega) \rtn_{\tp,[-t-1-k,-k-t]})\\
 &\leq& \sum_{k=0}^\infty e^{-k\delta} H(\ltn \bx(\omega) \rtn_{\tp,[-T-1-k,-k]})  \\
 &\leq& C_H\sum_{k=0}^\infty e^{-k\delta} (1+\ltn \bx(\omega) \rtn^p_{\tp,[-n(T)-1-k,-k]})\\
 &\leq& C_H n(T)^{p-1} \sum_{k=0}^\infty e^{-k\delta} \Big(1+\sum_{i=0}^{n(T)}\ltn \bx(\omega) \rtn^p_{\tp,[-i-1-k,-i-k]}\Big)\\
&\leq& C_H n(T)^p \sum_{k=0}^\infty e^{-k\delta} \Big(1+\ltn \bx(\omega) \rtn^p_{\tp,[-1-k,-k]}\Big)<\infty \quad \text{a.s.}
\end{eqnarray*}
where the last line is due to the fact that $\ltn \bx(\cdot) \rtn^p_{\tp,[-1,0]} \in \cL^\rho$ for any $\rho \geq 1$.  \\  
\end{proof}

\begin{proof}[{\bf Estimate \eqref{hestdiscrete}}] We sketch the proof here, since the arguments go line by line with the proof of \cite[Theorem 3.2 \& Proposition 3]{duc21}, but are adapted for the discrete time setting, by the usage of the discrete sewing lemma \cite[Lemma 2.2]{congduchong23} and the discrete Gronwall lemma. First, let 
\begin{eqnarray*}
y^\Delta_{t_k} &:=& \varphi(t_k,\omega)y_0^\Delta,\quad  \mu^\Delta_{t_k}:= \Phi^\Delta(t_k)\mu_0^\Delta, \quad \mu_{t_k} := \Phi^\Delta(t_k)y_0^\Delta,\\
y^*_{t_k} &:=&  y^\Delta_{t_k}-\mu^\Delta_{t_k},\qquad \mu^*_{t_k} := \mu_{t_k} - \mu^\Delta_{t_k},    
\end{eqnarray*}
for any $t_k \in \Pi[0,T]$ for $\mu_0^\Delta \in \cA^{\Delta,0}$. Due to estimates \eqref{Phideltaest}, 
\begin{equation}\label{mustar}
\begin{split}
\ltn \mu^\Delta \rtn_{1-{\rm var},\Pi[s,t]} & \leq (t-s)\|f\|_{\infty,\cA^{\Delta,0}};\quad
\|\mu^*\|_{\infty,\Pi[0,T]} \leq e^{C_fT} \|\mu^*_0\|;\\
\ltn \mu^* \rtn_{1-{\rm var},\Pi[s,t]} &\leq (t-s)e^{C_fT}\|\mu^*_0\|.   
\end{split}
\end{equation}
Assign $h_{t_k} := y^*_{t_k}-\mu^*_{t_k} = y^\Delta_{t_k} - \mu_{t_k}$ for all $t_k\in \Pi[0,T]$, then $y^\Delta_{t_k} =  \mu^\Delta_{t_k} + \mu^*_{t_k} +h_{t_k}$ and
\begin{eqnarray}\label{hequdis}
h_{t_{k+1}} &= &h_{t_k}+[g(y^\Delta_{t_k}) x_{t_k,t_{k+1}} + D g(y^\Delta_{t_k}) g(y^\Delta_{t_k})\X(t_k,t_{k+1})]+ [f(y^\Delta_{t_k}) -f(\mu_{t_k})  ]  (t_{k+1}-t_k) \notag\\
&=:& h_{t_k}+ F_{t_k,t_{k+1}} +[f(y^\Delta_{t_k}) -f(\mu_{t_k})  ]  (t_{k+1}-t_k)
\end{eqnarray}
in which $F_{s,t}: = g(y^\Delta_s)x_{s,t} + Dg(y^\Delta_s)g(y^\Delta_s)\X_{s,t}$. We define an auxiliary quantity $R^h_{s,t}:= h_{s,t} - g(y^\Delta_s)x_{s,t} = R^{y^\Delta}_{s,t} - \mu^\Delta_{s,t}$. Then
\[
\|R^{y^\Delta}_{s,t} \| \leq \|R^h_{s,t} \| +\|\mu^\Delta_{s,t}\|.
\]
Similar to \cite[Proposition 3]{duc21}, we can prove that 
\begin{eqnarray*}
&&\ltn g(\mu^\Delta + \mu^* +h) \rtn_{\tp,\Pi[s,t]} \vee \ltn Dg(\mu^\Delta + \mu^* +h) \rtn_{\tp,\Pi[s,t]} \\
&\leq& C_g \ltn h \rtn_{\tp,\Pi[s,t]} + C_g \ltn \mu^\Delta \rtn_{1-{\rm var},\Pi[s,t]} + 2C_g \ltn \mu^* \rtn^{\frac{2}{p}}_{1-{\rm var},\Pi[s,t]},\quad \forall 0\leq s\leq t\leq T.   
\end{eqnarray*}
On the other hand, by applying the discrete sewing lemma to $F_{s,t}$, we obtain from \eqref{hequdis} that
      \begin{eqnarray}\label{h}
 	   &&  \ltn h,R^h\rtn_{\tp,\Pi[s,t]} = \ltn h\rtn_{\tp,\Pi[s,t]}+\ltn R^h\rtn_{\tq,\Pi[s,t]} \notag\\
    &\leq&2 C_f \Delta\sum_{s\leq t_k < t} \|h_{t_k}\|+4C_p \Big(C_g\ltn \bx\rtn_{\tp,\Pi[s,t]} \vee C_g^2\ltn \bx \rtn^2_{\tp,\Pi[s,t]} \Big) \ltn h,R^h\rtn_{\tp,\Pi[s,t]}\notag\\
      && + (C_g\ltn \bx\rtn_{\tp,\Pi[s,t]}  +2C_g^2\ltn \bx\rtn^2_{\tp,\Pi[s,t]} + C_p C_g^3 \ltn \bx\rtn^3_{\tq,\Pi[s,t]}) \times \notag\\
      && \hspace{1cm}\times \left[1+ 4C_p\ltn \mu^*\rtn^{2/p}_{1{\rm-var},\Pi[s,t]} + 4C_p\ltn \mu^\Delta\rtn_{1{\rm-var},\Pi[s,t]}  \right] \\
      &\leq&2 C_f \Delta\sum_{s\leq t_k < t} \|h_{t_k}\|+4C_p \Big(C_g\ltn \bx\rtn_{\tp,\Pi[s,t]} \vee C_g^2\ltn \bx \rtn^2_{\tp,\Pi[s,t]} \Big) \ltn h,R^h\rtn_{\tp,\Pi[s,t]} + L\notag
     \end{eqnarray}
 where the third term $L$ in \eqref{h} can be written in the form  
 \begin{equation}\label{Lest}
 L:=C_g\xi(C_f,T,\ltn \bx\rtn_{\tp,[0,T]},\|f\|_{\infty,\cA^{\Delta,0}}) (1+\|\mu^*_0\|^{\frac{2}{p}})
 \end{equation}
 for a (generic) integrable random variable $\xi$, due to \eqref{mustar}.
 
Now we repeat arguments in \cite[Theorem\ 3.3]{congduchong23}, to construct the same sequence of stopping times $\{\tau^\Delta_i\}$ on $\Pi[0,T]$ as follows. Assign $\tau^\Delta_0 = 0$. For each $n\in \N$, assume $\tau^\Delta_n =t_l$ is determined, one can define $\tau^\Delta_{n+1}$ by the following rule:
        \begin{itemize}
			\item if $4C_p C_g \ltn \bx \rtn_{\tp, \Pi[t_l,t_{l+1}]} \leq \frac{1}{2}$ then set $\tau^\Delta_{n+1}:= \sup \{t_m >t_l:  4C_p C_g \ltn \bx \rtn_{\tp, \Pi[t_l,t_m]} \leq\frac{1}{2} \}$;
			\item else set $\tau^\Delta_{n+1}:= t_{l+1}$. 	
		\end{itemize}   
Let consider two cases from \eqref{h} according to the two cases of the discrete stopping times. In the first case, it follows from \eqref{h} that
\[
\|h_t\| \leq \|h_s\|+\ltn h \rtn_{\tp,\Pi[s,t]} \leq \|h_s\|+4 C_f\Delta \sum_{s \leq t_k < t} \|h_{t_k}\| + 2L,\quad \forall s,t \in \Pi,\ \tau^\Delta_n \leq s\leq t \leq \tau^\Delta_{n+1}.
\]
Hence by induction, we can prove that 
\[
\|h_t\| \leq \|h_s\| (1+4C_f\Delta)^{\frac{t-s}{\Delta}} + 2L  (1+4C_f\Delta)^{\frac{t-s}{\Delta}-1},\quad \forall s, t\in \Pi, \  \tau^\Delta_n \leq s\leq t \leq \tau^\Delta_{n+1};
\]
which yields
\begin{equation}\label{hstoptime}
|h_{\tau^\Delta_{n+1}}\|\leq \|h\|_{\infty,\Pi[\tau^\Delta_n,\tau^\Delta_{n+1}]} \leq (\|h_{\tau^\Delta_n}\| + 2L) e^{4C_f(\tau^\Delta_{n+1}-\tau^\Delta_n)}.    
\end{equation}
In the second case, it follows from \eqref{hequdis} that
\begin{eqnarray*}
|h_{\tau^\Delta_{n+1}}\|=\|h_{t_{l+1}}\| &\leq& \|h_{t_l}\| + C_g \ltn \bx \rtn_{\tp,\Pi[t_l,t_{l+1}]} + C_g^2 \ltn \bx \rtn^2_{\tp,\Pi[t_l,t_{l+1}]} + 2C_f \Delta \|h_{t_l}\| \\
&\leq&\|h_{t_l}\| (1+4C_f\Delta) +2L\\
&\leq& (|h_{\tau^\Delta_n}\|+ 2L)e^{4C_f(\tau^\Delta_{n+1}-\tau^\Delta_n)}.
\end{eqnarray*}
Since \eqref{hstoptime} holds in both cases, we conclude from induction that 
\begin{equation}\label{hsupest}
\begin{split}
\|h\|_{\infty,\Pi[0,T]} &\leq \Big[\|h_0\| + 2L \hat{N}\Big(\frac{1}{2},\bx,[0,T]\Big)\Big] \exp \{4C_f(\tau^\Delta_{\hat{N}} -\tau^\Delta_0)\} \\
&\leq \Big[\|h_0\| + 2L \hat{N}\Big(\frac{1}{2},\bx,[0,T]\Big)\Big] e^{4C_fT},
\end{split}
\end{equation}
where $\hat{N}(\frac{1}{2},\bx,[0,T])$ is the number of stopping times $\tau^\Delta$ in the interval $[0,T]$. As proved in \cite[Lemma 3.4]{duchongcong24}, 
\begin{equation}\label{Nhatest}
\hat{N}(\frac{1}{2},\bx,[0,T]) \leq 2N(\frac{1}{2},\bx,[0,T]) \leq 2 + 2^{p+1} \ltn \bx \rtn^p_{\tp,[0,T]}. 
\end{equation}
Finally, by replacing $h_0 = y_0^* -\mu_0^* =0$, $L$ in \eqref{Lest} and $\hat{N}$ in \eqref{Nhatest} into \eqref{hsupest}, we obtain \eqref{hestdiscrete} for a generic random variable $\xi$.
\end{proof}

		\bibliographystyle{apalike}

\begin{thebibliography}{}
			
			\bibitem{arnold}
			L. Arnold.
			\newblock{\em Random dynamical systems.}
			\newblock{Springer, Berlin Heidelberg New York}, (1998).
			%
            \bibitem{arnoldschmalfuss}
            L. Arnold, B. Schmalfu\ss.
            \newblock{Lyapunov's second method for random dynamical systems.}
            \newblock{\em J. Differential Equations}, Vol. {\bf 177} (1), (2001), 235--265.
            %
			\bibitem{BRSch17}
			I. Bailleul, S. Riedel, M. Scheutzow.
			\newblock{Random dynamical systems, rough paths and rough flows.}
			\newblock{\em J. Differential Equations}, Vol. {\bf 262}, (2017), 5792--5823.
			%
		      %
			%
			\bibitem{congduchong23}
			N. D. Cong, L. H. Duc, P. T. Hong.
			\newblock{Numerical attractors via discrete rough paths.}
			\newblock{\em J. Dyn. Diff. Equat.}, {\bf 37}, (2025), 727–748.
			%
			\bibitem{congduchong22}
			N. D. Cong, L. H. Duc, P. T. Hong.
			\newblock{Pullback attractors for stochastic Young differential delay equations.}
			\newblock{\em J. Dyn. Diff. Equat.}, {\bf 34}, (2023), 605--636.
			%
            \bibitem{congduchong17}
			N. D. Cong, L. H. Duc, P. T. Hong.
			\newblock{Young differential equations revisited.}
			\newblock{\em J. Dyn. Diff. Equat.}, Vol. {\bf 30}, Iss. {\bf 4}, (2018), 1921--1943. 
			%
			%
			%
			\bibitem{duc21}
			L. H. Duc.
			\newblock{Random attractors for dissipative systems with rough noises.}
			\newblock{\em Discrete Cont. Dyn. Syst.}, {\bf 42} (4), (2022), 1873--1902
			%
			\bibitem{duc20}
			L. H. Duc.
			\newblock{Controlled differential equations as rough integrals.}
			\newblock{\em Pure and Applied Functional Analysis}, {\bf 7} (4), (2022), 1245--1271.
			%
			\bibitem{duckloeden}
			L. H. Duc, P. Kloeden.
			\newblock{Numerical attractors for rough differential equations.}
			\newblock{\em SIAM J. Num. Anal.}, (2023), 2381--2407.
			%
			\bibitem{duchongcong24}
			L. H. Duc, P. T. Hong, 	N. D. Cong.
			\newblock{Stability criteria for rough systems.}
			\newblock{\em to appear in SIAM J. Control Optimization}, (2025). Preprint arxiv.2410.07842, version 44 pages.
			%
			  %
			\bibitem{frizhairer}
			P. Friz, M. Hairer.
			\newblock{\em A course on rough path with an introduction to regularity structure.}
			\newblock{Universitext}, Vol. {\bf XIV}, Springer, Berlin, 2014.
			%
			\bibitem{friz}
			P. Friz, N. Victoir.
			\newblock {\em Multidimensional stochastic processes as rough paths: theory and applications.}
			\newblock {Cambridge Studies in Advanced Mathematics, 120. Cambridge Unversity Press, Cambridge}, (2010).
			%
			\bibitem{fitzhugh61}
			R. FitzHugh.
			\newblock Impulses and physiological states in theoretical models of nerve
			membrane.
			\newblock {\em Biophysical Journal}, 1(6), (1961), 445--466.
			%
			\bibitem{gabyetal}
			N. Gaby, F. Zhang, X. Ye.
			\newblock Lyapunov-Net: A deep neural network architecture for Lyapunov function approximation.
			\newblock{\em IEEE 61st Conference on Decision and Control (CDC)}, Cancun, Mexico, (2022), 2091--2096. DOI: 10.1109/CDC51059.2022.9993006.
			%
			%
            \bibitem{gruene}
            L. Gr\"une.
            \newblock{Computing Lyapunov functions using deep neural networks.}
            \newblock{\em J. Computational Dynamics}, {\bf 8} (2), (2021), 131--152.
            %
			\bibitem{gubinelli}
			M. Gubinelli.
			\newblock {Controlling rough paths.}
			\newblock  {\em J. Funtional Analysis}, {\bf 216} (1), (2004), 86--140.
			%
            \bibitem{halmos}
            P. Halmos.
            \newblock{\em Measure theory.}
            \newblock{Springer-Verlag, NewYork-Heidelberg-Berlin}, (1950).
            %
			%
			%
			%
			%
			%
			%
			%
			%
			\bibitem{khasminskii}  
			R. Khasminskii.
			\newblock{\em Stochastic stability of differential equations. }
			\newblock{(2nd edition), Vol. 66}, 2011.
			%
            \bibitem{kloedenlorenz}
            P. Kloeden, T. Lorenz.
            \newblock{Mean-square random dynamical systems.}
            \newblock \emph{J. Differential Equations}, {\bf 253} (5), (2012), 1422--1438.
			%
			\bibitem{lyons98}
			T. Lyons.
			\newblock{Differential equations driven by rough signals.}
			\newblock{\em Rev. Mat. Iberoam.}, Vol. {\bf 14} (2), (1998), 215--310.
			%
			%
			\bibitem{mandelbrot}
			B. Mandelbrot, J. van Ness.
			\newblock{Fractional Brownian motion, fractional noises and applications.}
			\newblock{\em SIAM Review}, {\bf 4}, No. 10, (1968), 422--437.
			%
			%
			\bibitem{richardsetal}
			S. M. Richards, F. Berkenkamp, A. Krause.
			\newblock The Lyapunov neural network: adaptive stability certification for safe learning of dynamical systems.
			\newblock{\em Conference on robot learning}, PMLR, (2018), 466--476.
			%
			%
			       
			%
			%
			\bibitem{rodriguezetal}
			I. Rodriguez, A. Ames, Y. Yue.
			\newblock LyaNet: A Lyapunov framework for training Neural ODEs.
			\newblock {\em Proceedings 39th ICML, PMLR 162}, (2022).
			%
            \bibitem{schmalfuss97}
            B. Schmalfu\ss.
            \newblock{The random attractor of the stochastic Lorenz system.}
            \newblock{\em ZAMP}, {\bf 48}, (1997), 951--975.
            %
			\bibitem{Sus78}
			H. J. Sussmann.
			\newblock {On the gap between deterministic and stochastic ordinary differential
				equations.}
			\newblock {\em The Annals of Probability.} {\bf 6}, No. 1, (1978), 19--41.
			%
			\bibitem{ducjostdatmarius}
			M. Yamakou, T. D. Tran, L. H. Duc, J. Jost. 
			\newblock{Stochastic FitzHugh-Nagumo neuron model in excitable regime embeds a leaky integrate-and-fire model.}
			\newblock{\em J. Mathematical Biology}, {\bf 79}, (2019), 509--532.
			%
			\bibitem{zhouetal}
			R. Zhou, T. Quartz, H. D. Sterck, J. Liu.
			\newblock Neural Lyapunov control of unknown nonlinear systems with stability guarantees.
			\newblock{\em NeurIPS} 2022
		\end{thebibliography}

	\end{document}